\date{}
\newtheorem{Theorem}{Theorem}[section]
\newtheorem{Lemma}{Lemma}[section]
\newcommand\R{\mbox{\bf R}}
\newcommand\N{\mbox{\bf N}}
\newcommand\SR{\mbox{\scriptsize\bf R}}
\newcommand{\definition}{{\lower .5ex
  \hbox{$\>\>\stackrel{\triangle}{=}\>\>$} }}
\newcommand\supp{\mathop{\rm supp}}
\begin{document}

\baselineskip=22pt
\thispagestyle{empty}

\mbox{}
\bigskip

\begin{center}{\Large\bf The Cauchy problem for the shallow water type}\\[1ex]
{\Large \bf  equations in low regularity spaces on the circle }

{Wei YAN$^a$,\quad  Yongsheng LI$^b$, \quad
\quad Xiaoping Zhai$^c$ and Yimin Zhang $^d$}\\[2ex]

{$^a$School of Mathematics and Information Science, Henan Normal University,}\\
{Xinxiang, Henan 453007, P. R. China}\\[1ex]

{$^b$Department of Mathematics, South China University of Technology,}\\
{Guangzhou, Guangdong 510640, P. R. China}\\[1ex]

{$^c$School of Mathematics Computational Science, Sun Yat-sen University}\\[1ex]
{Guangzhou, Guangdong 510275, P. R. China}\\[1ex]
{$^d$Wuhan Institute of Physics and Mathematics, Chinese Academy of Sciences, Wuhan, Hubei 430071, P. R. China}

\end{center}

\bigskip
\bigskip

\noindent{\bf Abstract.}  In this paper, we investigate the Cauchy problem
for the  shallow water type equation
\begin{eqnarray*}
      u_{t}+\partial_{x}^{3}u
     + \frac{1}{2}\partial_{x}(u^{2})+\partial_{x}
     (1-\partial_{x}^{2})^{-1}\left[u^{2}+\frac{1}{2}u_{x}^{2}\right]=0,x\in {\mathbf T}=\R/2\pi
     \lambda
\end{eqnarray*}
 with low regularity data in the periodic settings and $\lambda\geq1$. We prove that the bilinear
 estimate in  $X_{s,b}$ with $s<\frac{1}{2}$ is invalid.
    We also  prove that the
 problem
 is locally well-posed in $H^{s}(\mathbf{T})$ with $\frac{1}{6}<s<\frac{1}{2}$ for small initial data.
 The result of  this paper improves the result of case
 $j=1$ of  Himonas and  Misiolek (Communications in Partial Differential Equations,
23(1998), 123-139.).
 The new ingredients   are
 some new function spaces and some new Strichartz estimates.

\bigskip

\noindent {\bf Keywords}: Shallow water type  equation; Strichartz estimates; Low regularity

\bigskip
\noindent {\bf Short Title:} Cauchy problem for shallow water type equation

\bigskip
\noindent {\bf Corresponding Author:} W. YAN

\bigskip
\noindent {\bf Email Address:}yanwei19821115@sina.cn

\bigskip
\noindent{\bf Fax number:} +86-0373-3326174

\bigskip
\noindent {\bf AMS  Subject Classification}:  35G25
\bigskip

\leftskip 0 true cm \rightskip 0 true cm

\newpage{}

\begin{center}{\Large\bf The Cauchy problem for the shallow water type}\\[1ex]
{\Large \bf  equations in low regularity spaces on the circle}

{Wei YAN$^a$,\quad  Yongsheng LI$^b$, \quad
\quad Xiaoping Zhai$^c$ and Yimin Zhang $^d$}\\[2ex]

{$^a$School of Mathematics and Information Science, Henan Normal University,}\\
{Xinxiang, Henan 453007, P. R. China}\\[1ex]

{$^b$Department of Mathematics, South China University of Technology,}\\
{Guangzhou, Guangdong 510640, P. R. China}\\[1ex]

{$^c$School of Mathematics Computational Science, Sun Yat-sen University}\\[1ex]
{Guangzhou, Guangdong 510275, P. R. China}\\[1ex]
{$^d$Wuhan Institute of Physics and Mathematics, Chinese Academy of Sciences, Wuhan, Hubei 430071, P. R. China}

\end{center}

\noindent{\bf Abstract.}   In this paper, we investigate the Cauchy problem
for the  shallow water type equation
\begin{eqnarray*}
      u_{t}+\partial_{x}^{3}u
     + \frac{1}{2}\partial_{x}(u^{2})+\partial_{x}
     (1-\partial_{x}^{2})^{-1}\left[u^{2}+\frac{1}{2}u_{x}^{2}\right]=0,x\in {\mathbf T}=\R/2\pi
     \lambda
\end{eqnarray*}
 with low regularity data in the periodic settings and $\lambda\geq1$. We prove that the bilinear
 estimate in  $X_{s,b}$ with $s<\frac{1}{2}$ is invalid.
    We also  prove that the
 problem
 is locally well-posed in $H^{s}(\mathbf{T})$ with $\frac{1}{6}<s<\frac{1}{2}$ for small initial data.
 The result of  this paper improves the result of case
 $j=1$ of  Himonas and  Misiolek (Communications in Partial Differential Equations,
23(1998), 123-139.).
 The new ingredients   are
 some new function spaces and some new Strichartz estimates.
\bigskip

{\large\bf 1. Introduction}
\bigskip

\setcounter{Theorem}{0} \setcounter{Lemma}{0}

\setcounter{section}{1}

In this paper, we consider the Cauchy problem for the shallow water type equation
\begin{eqnarray}
&& u_{t}+\partial_{x}^{3}u
     + \frac{1}{2}\partial_{x}(u^{2})+\partial_{x}(1-\partial_{x}^{2})^{-1}
     \left[u^{2}+\frac{1}{2}u_{x}^{2}\right]=0,\label{1.01}\\
    &&u(x,0)=u_{0}(x),\quad x\in \mathbf{T}=\R/2\pi\lambda,\lambda\geq1. \label{1.02}
\end{eqnarray}
Obviously, (\ref{1.01}) is the higher order modification of
the Camassa-Holm equation
\begin{eqnarray}
u_{t}+ \frac{1}{2}\partial_{x}(u^{2})+\partial_{x}(1-\partial_{x}^{2})^{-1}
\left[u^{2}+\frac{1}{2}u_{x}^{2}\right]=0\label{1.03}
 \end{eqnarray}
with nonlocal form. Equation (\ref{1.03})
 was derived by Camassa and Holm as a nonlinear model for water wave motion
in shallow channels with  the aid of  an asymptotic expansion directly in the Hamiltonian
for Euler equations \cite{CH,FF}. Camassa-Holm equation has been studied extensively in the last three decades, for instance, see
\cite{BCARMA, BCAA, CH,C, C2000,C2001,C2006,CE, CECPAM,CE1998,CEA,CEB2007,
CE2000,CJ,  CM,CMCPAM, CMP, CS, CSNS, CSPLA, CK,  CKL, CLa,
DH,D2003,DGH,EY,EYJFA,FF,HMPZ,HM2005,Iv2006,Iv2007,J,KL,Lak,L2004IMRN,
LIMRN, L2005, LPA,L2005JDE,  Ko,R, XZCPAM, XZCPDE,Y}.
 Fokas and Fuchssteiner \cite{FF} proved that the
Camassa-Holm equation possesses bi-Hamiltonian structure.
Camassa and  Holm \cite{CH}, proved that
the Camassa-Holm equation is completely integrable.
Camassa et.al. \cite{CHH} proved that the Camassa-Holm equation
possesses peaked solitary waves which are orbitally stable and
interact like solitons \cite{BSS,CS}.
Constantin and his co-authors \cite{C2006, CEB2007} proved that
the peaked solitary waves replicate a
characteristic for the waves of great height-waves of largest
amplitude which are exact solutions of the governing equations
for water waves, see also \cite{To}.
 The result of \cite{R} implies that the Cauchy problem
for the Camassa-Holm equation is locally well-posed in $H^{s}(\R)$ with $s>\frac{3}{2}$.
Under some assumptions on the initial data, Constantin and Escher \cite{CEA, Con2000}
proved that the Cauchy problem for the Camassa-Holm equation possesses  not only  the global strong solutions and
 but also finite time blow-up solutions.
Constantin and Molinet \cite{CM}  proved that the Cauchy problem for the Camassa-Holm equation possesses
 the global weak solution in $H^{1}(\R),$ see also \cite{XZCPAM, XZCPDE}.
Escher and Yin \cite{EY, EYJFA} studied the initial-boundary value problem for the Camassa-Holm equation.
Lenells \cite{L2004IMRN} studied the correspondence between the KdV and Camassa-Holm equation and
the stability of periodic peakons  \cite{LIMRN}. In contrast to the Camassa-Holm
equation (\ref{1.03}), equation (\ref{1.01}) loses integrability in the sense that the equation (\ref{1.01})
is equivalent to  a linear flow at constant speed in the right action-angle-variables,
see the discussion in \cite{CM}, but, as shown in Theorem 1.2 of \cite{LYLH},
global existence prevails while for Camassa-Holm equation the development of blow-up in finite time is quite frequent.

Omitting the last term,  (\ref{1.01})  yields the  Korteweg-de Vries  equation
\begin{eqnarray}
u_{t}+\partial_{x}^{3}u+\frac{1}{2}\partial_{x}(u^{2})=0,\label{1.04}
\end{eqnarray}
which possesses the bi-Hamiltonian structure and completely integrable
and infinite conservation laws. In the recent period, mathematical studies have focus on the
Cauchy problem for the KdV
equation, for instance, see \cite{Bourgain93, Bourgain97, CKSTT,KPV1996,KPV2001,Kis,T}.
Especially, the Fourier restriction norm method which is introduced by Bourgain \cite{B,Bourgain93}
is an effective tool in solving the Cauchy problem for
dispersive equations in low regularity. Using the Fourier restriction norm method,
Kenig et. al. \cite{KPV1996}  proved that  the Cauchy problem for the periodic
KdV equation is
locally well-posed in $H^{s}(\mathbf{T})$ with $s\geq-\frac{1}{2}$.
Bourgain \cite{Bourgain97}
proved that the Cauchy problem for the periodic KdV  equation is ill-posed in
$H^{s}([0,2\pi))$
with $s< -\frac{1}{2}$  in the sense that the solution map is not $C^{3}.$
By using  the I-method which is  the modification   of  high-low frequency
technique introduced by Bourgain in \cite{B98}.  Colliander et.al. \cite{CKSTT}
proved that the Cauchy problem for the
periodic KdV  equation is globally well-posed in $H^{s}(\mathbf{T})$ with $s\geq -\frac{1}{2}.$
By using the inverse scattering method,
Kappeler and  Topalov \cite{KT2006} proved that   the  Cauchy  problem for the KdV  equation
is globally
well-posed  in $H^{s}(\mathbf{T})$   with $s\geq -1$. Recently, by using short time Bourgain spaces,
Molinet \cite{Molinet} proved that the  Cauchy  problem for the KdV  equation  is
ill-posed in
$H^{s}(\mathbf{T})$   with $s< -1$. From \cite{KPV1996,KPV2001}, we know that $s=-\frac{3}{4}$ is the critical indices for the well-posedness of the KdV equation on the real line.
By using the $I$-method and
modified Bourgain spaces, Guo \cite{G} and
Kishimoto \cite{Kis} established the global well-posedness result of the Cauchy
problem for the KdV
equation in $H^{-3/4}(\R).$
Very recently, Liu \cite{L} proved that the smooth solutions satisfy a-prior local
time $H^{s}(\R)$ bound
in terms of the $H^{s}$ size of the initial data for $s\geq -\frac{4}{5}$ on the real line.

Many people have investigate the Cauchy problem for (\ref{1.01}), for instance, see
\cite{Go,B,BDIE,B2003,HM1998,HM2000,HM,LJ,O,WC,LYY,YL,LY,LYLH} and the references therein.
Compared with the Camassa-Holm equation, the higher-order terms
arise because of the desire to  go beyond the regime of waves of small amplitude, to
capture waves of moderate amplitude, see \cite{CLa}.
 Himonas and  Misiolek \cite{HM1998} proved that (\ref{1.01})-(\ref{1.02}) are locally well-posed in $H^{s}([0,2\pi))$
 with $s\geq\frac{1}{2}$ for small initial data and are globally well-posed in $H^{1}([0,2\pi))$ for small initial data.
Himonas and  Misiolek \cite{HM} proved that (\ref{1.01})-(\ref{1.02}) are locally well-posed in $H^{s}([0,2\pi))$
 with $s\geq\frac{1}{2}$ for arbitrary initial data and are globally well-posed in $H^{1}([0,2\pi))$. A nature question one would ask is
 that: what will happen when $s<\frac{1}{2}$? As far as we know, there is no result for this case, and it is the reason why we consider this problem .

In this paper, firstly, we prove that the bilinear
 estimate in $X_{s,b}$ with $s<\frac{1}{2}$ is invalid.
 Then,  by exploiting the spirit of \cite{BT,Kato,KT,NTT,IK,IKT,TT} and  introducing some  new function
spaces and Strichartz estimates which are
used to establish the bilinear estimates and  the fixed point Theorem, we prove that
the Cauchy problem for (\ref{1.01}) is locally well-posed in $H^{s}(\mathbf{T})$ with
$\frac{1}{6}< s<\frac{1}{2}$ for small initial data,
which improves the result of case
 $j=1$ of \cite{HM1998}.

From Theorem 1.1 below, we know that the standard Fourier restriction norm method $W^{s}$ is not effective, thus we must adapt the modified
Fourier restriction norm method, in other words, motivated by Theorem 1.1, we make a suitable modification of standard Bourgain space $Z^{s}$ below.

We present some notations before stating the main results. $0<\epsilon\ll1$ means
that $0<\epsilon<\frac{1}{10^{9}}$. $C$ is a positive constant
 which may vary from line to line.  $A\sim B$ means that $|B|\leq |A|\leq 4|B|$.
 $A\gg B$ means that $|A|> 4|B|.$ $a\vee b={\rm max}\left\{a,b\right\}.$
 $a\wedge b={\rm min}\left\{a,b\right\}.$ Let $\Psi \in C_{0}^{\infty}(\R)$ be an even function such
  that $\Psi \geq 0,$ $\supp \Psi \subset [-\frac{3}{2},\frac{3}{2}]$,
 $\Psi= 1$ on $[-\frac{5}{4},\frac{5}{4}]$ and
 $\Psi_{k}=\Psi(2^{-k}\xi)-\Psi(2^{-k+1}\xi).$
Throughout this paper,
 $\dot{Z}:=Z- \{ 0\}$ and $\dot{Z}^{+}:=Z^{+}- \{ 0\}$.    Denote by
 $(dk)_{\lambda}$ the normalized counting measure on $\dot{Z_{\lambda}}=
 \frac{\dot{Z}}{\lambda}$:
 \begin{eqnarray*}
 \int a(x)(dk)_{\lambda}=\frac{1}{\lambda}\sum_{k\in  \dot{Z}_{\lambda}}a(k).
 \end{eqnarray*}
 Denote by
$
 \mathscr{F}_{x}f(k)=\int_{0}^{2\pi \lambda}e^{- i kx}f(x)dx
$
   the Fourier transformation of a function $f$
 defined on
  $\mathbf{T}$ with  respect to the space variable
 and we have the Fourier inverse transformation formula
 \begin{eqnarray*}
 f(x)=\int e^{ i kx} \mathscr{F}_{x}f(k)(dk)_{\lambda}=\frac{1}{\lambda}\sum_{k \in
 \dot{Z}_{\lambda}}e^{ i kx}\mathscr{F}_{x}f(k).
 \end{eqnarray*}
 Denote  by
$
 \mathscr{F}_{t}f(\tau)=\int_{\SR}e^{- i t\tau}f(t)dt
$
 the Fourier transformation of a function $f$
  with the respect to the time variable
  and we have the Fourier inverse transformation formula
 \begin{eqnarray*}
 f(t)=\int e^{ i t\tau} \mathscr{F}_{t}f(\tau)d\tau.
 \end{eqnarray*}
 We define
\begin{eqnarray*}
S(t)\phi(x)=\int e^{i kx }e^{it k^{3}}\mathscr{F}_{x}\phi(k)(dk)_{\lambda}.
\end{eqnarray*}
Denote by
\begin{eqnarray*}
\mathscr{F}f(k,\tau)=\int\int_{0}^{2\pi \lambda}e^{-i kx}e^{-i t\tau}f(x,t)dxdt
\end{eqnarray*}
the space-time  Fourier transform
 for $k\in \dot{Z}_{\lambda}$ and $\tau\in \R$ by
and this transformation is inverted by
\begin{eqnarray*}
f(x,t)=\int\int e^{i kx}e^{i t\tau}\mathscr{F}f(k,\tau)(dk)_{\lambda}d\tau.
\end{eqnarray*}
Let
\begin{eqnarray*}
\Lambda^{-1}=\mathscr{F}^{-1}\langle\tau-k^{3}\rangle^{-1}\mathscr{F}, \mathscr{F}_{x}J^{s}u=\langle k\rangle ^{s}\mathscr{F}_{x}u(k).
\end{eqnarray*}
It is easily checked that
\begin{eqnarray*}
&&\|f\|_{L^{2}(\mathbf{T})}=\|\mathscr{F}_{x}f\|_{L^{2}((dk)_{\lambda})},\\
&&\int_{0}^{2\pi \lambda}f(x)\overline{g(x)}dx=\int \mathscr{F}_{x}f(k)\overline{\mathscr{F}_{x}
f(k)}(dk)_{\lambda},\\
&&\mathscr{F}_{x}(fg)=\mathscr{F}_{x}f*\mathscr{F}_{x}g=\int \mathscr{F}_{x}f(k-k_{1})
\mathscr{F}_{x}g(k_{1})(dk_{1})_{\lambda}.
\end{eqnarray*}
Let
\begin{eqnarray*}
&&P(k)=-k^{3},\sigma=\tau+P(k),\quad \sigma_{j}=\tau_{j}+P(k_{j}).
\end{eqnarray*}
$\eta(t)$ is a smooth function with $\supp \eta (t)\subset [-1,2]$ and $\eta=1$ on $[-1,1]$.
We define the Sobolev space $H^{s}(\mathbf{T})$ with the norm
\begin{eqnarray*}
\|f\|_{H^{s}(\mathbf{T})}=\|\langle k\rangle^{s}\mathscr{F}_{x}f(k)\|_{L^{2}((dk)_{\lambda})}
\end{eqnarray*}
and define the $X_{s,b}$ spaces for $2\pi \lambda$-periodic KdV via the norm
\begin{eqnarray*}
\|u\|_{X_{s,b}(\mathbf{T} \times \SR)}=\left\|\langle k\rangle^{s} \left\langle \tau
+P(k)\right\rangle^{b}\mathscr{F}u(k,\tau)\right\|_{L^{2}((dk)_{\lambda}(d\tau))}
\end{eqnarray*}
and define the $Y^{s}$ space defined via the norm
\begin{eqnarray*}
\|u\|_{Y^{s}}=\left\|\langle k\rangle^{s}\mathscr{F}
u(k,\tau)\right\|_{L^{2}((dk)_{\lambda}L^{1}(d\tau))}.
\end{eqnarray*}
Let
\begin{eqnarray*}
&&D_{1}=\left\{(\tau,k)\in \R\times \dot{Z}_{\lambda}:|\sigma|
\leq 2|k|^{2},|k|\geq1\right\},\\
&&D_{2}=\left\{(\tau,k)\in \R\times \dot{Z}_{\lambda}: 2|k|^{2}<
 |\sigma|\leq 6|k|^{3},|k|\geq1\right\},\\
&&D_{3}=\left\{(\tau,k)\in \R\times \dot{Z}_{\lambda}:|\sigma|>
 6|k|^{3},|k|\geq1\right\},\\
&&D_{4}=\left\{(\tau,k)\in \R\times \dot{Z}_{\lambda}:|\sigma|>
6|k|^{3},\frac{1}{\lambda}\leq |k|\leq1\right\},\\
&&D_{5}=\left\{(\tau,k)\in \R\times \dot{Z}_{\lambda}:|\sigma|
\leq 6|k|^{3},\frac{1}{\lambda}\leq |k|\leq1\right\}.
\end{eqnarray*}
We define $Z^{s}$ space as  follows:
\begin{eqnarray*}
\|u\|_{Z^{s}}=\|P_{D_{1}\cup D_{5}}u\|_{X_{s,\frac{5}{6}-\epsilon}}
+\|P_{D_{2}}u\|_{X_{1-s,s+\frac{1}{3}-\epsilon}}+
\|P_{D_{3}\cup D_{4}}u\|_{X_{1-s,s}}+\left\|u\right\|_{Y^{s}}
\end{eqnarray*}
and
\begin{eqnarray*}
\|u\|_{W^{s}}=\|u\|_{X_{s,\frac{1}{2}}}+\left\|u\right\|_{Y^{s}}.
\end{eqnarray*}
We define $Z^{s}([0,T])$ by the following norm:
\begin{eqnarray*}
\|u\|_{Z^{s}([0,T])}:={\rm inf}\left\{\|v\|_{Z^{s}}:\qquad u(t)=v(t)
\qquad on \qquad t\in I\right\}.
\end{eqnarray*}

The main result of this paper are as follow.

\begin{Theorem}\label{Thm1}
Let $s<\frac{1}{2}$,
\begin{eqnarray}
F(u_{1},u_{2})=\frac{1}{2}\partial_{x}(u_{1}u_{2})+\partial_{x}(1-\partial_{x}^{2})^{-1}\left[u_{1}u_{2}+\frac{1}{2}(\partial_{x}u_{1})(\partial_{x}u_{2})\right],\label{1.06}
\end{eqnarray}
and $u_{j}(j=1,2)$ be $2\pi$-periodic
functions.
Then
\begin{eqnarray*}
\left\|\mathscr{F}^{-1}\left[\langle\tau-k^{3}\rangle^{-1}\mathscr{F}F(u_{1},u_{2})\right]\right\|_{W^{s}}\leq C\prod_{j=1}^{2}\|u_{j}\|_{W^{s}}
\end{eqnarray*}
is invalid.
\end{Theorem}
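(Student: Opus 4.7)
The plan is to construct a sequence of $2\pi$-periodic test functions $(u_1,u_2)$ parametrized by a large integer $N$ for which the left-hand side grows like $N$ while the right-hand side grows only like $N^{2s}$, contradicting the estimate whenever $s<\tfrac12$. The trouble-maker is the non-local quadratic piece $\partial_x(1-\partial_x^2)^{-1}[\tfrac12 u_{1x}u_{2x}]$ in $F$, whose symbol supplies a factor $k_1 k_2$ from the two derivatives. When the inputs carry nearly opposite high frequencies $k_1=N$ and $k_2=1-N$, this factor is of order $N^2$, yet the outer multiplier $ik/(1+k^2)$ at output frequency $k=1$ is a bounded constant. Concretely, I would take
\begin{eqnarray*}
\mathscr{F}u_1(k,\tau) = \mathbf{1}_{\{k=N\}}\,\mathbf{1}_{[N^3-1,\,N^3+1]}(\tau),\qquad
\mathscr{F}u_2(k,\tau) = \mathbf{1}_{\{k=1-N\}}\,\mathbf{1}_{[(1-N)^3-1,\,(1-N)^3+1]}(\tau),
\end{eqnarray*}
for which direct computation gives $\|u_j\|_{X_{s,1/2}}\sim\|u_j\|_{Y^s}\sim N^s$, hence $\|u_1\|_{W^s}\|u_2\|_{W^s}\sim N^{2s}$.

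Convolution pins the space-time Fourier support of $F(u_1,u_2)$ to $k=1$ with $\tau$ in an $O(1)$-neighbourhood of $\tau_0:=N^3+(1-N)^3=3N^2-3N+1$. Since $\partial_x(1-\partial_x^2)^{-1}$ acts at $k=1$ as multiplication by $i/2$, a short calculation gives
\begin{eqnarray*}
\mathscr{F}F(u_1,u_2)(1,\tau) = \Bigl(i - \tfrac{i}{4}N(1-N)\Bigr)\mathscr{F}(u_1 u_2)(1,\tau),
\end{eqnarray*}
with $|\mathscr{F}(u_1 u_2)(1,\tau)|\sim 1$ on the relevant $\tau$-interval, so $|\mathscr{F}F(u_1,u_2)(1,\tau)|\sim N^2$. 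Since $|\tau-k^3|=|\tau-1|\sim 3N^2$ for such $\tau$, dividing by $\langle\tau-k^3\rangle$ yields $|\mathscr{F}(\Lambda^{-1}F)(1,\tau)|\sim 1$, whence
\begin{eqnarray*}
\|\Lambda^{-1}F(u_1,u_2)\|_{W^s}\;\geq\;\|\Lambda^{-1}F(u_1,u_2)\|_{X_{s,1/2}}\;\gtrsim\;\langle 1\rangle^{s}\,\langle 3N^2\rangle^{1/2}\;\sim\; N.
\end{eqnarray*}
The hypothetical bound then forces $N\lesssim N^{2s}$ uniformly in $N$, which fails for every $s<\tfrac12$.

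The only delicate point, and thus the main (though modest) obstacle, is to confirm that the three summands comprising $F(u_1,u_2)$ do not conspire to cancel the leading $N^2$ contribution. Both $\tfrac12\partial_x(u_1u_2)$ and the $u_1u_2$ part of the non-local term contribute only $O(1)$ at $k=1$, as no derivative falls on $u_j$, while $\partial_x(1-\partial_x^2)^{-1}[\tfrac12 u_{1x}u_{2x}]$ carries the factor $k_1 k_2\sim -N^2$. The bounded term $i$ cannot cancel $-\tfrac{i}{4}N(1-N)$, so the combined symbol stays of order $N^2$ and the computation closes. A small additional check verifies that enlarging the $\tau$-support of $\mathscr{F}u_j$ to a bounded neighbourhood of $k_j^3$ preserves all the estimates, so no other algebraic cancellations can rescue the bound.
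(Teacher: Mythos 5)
Your construction is correct and is essentially the same as the paper's: both place $u_1$ at frequency $k_1=N$ and $u_2$ at $k_2=1-N$ with unit modulation, exploit the factor $k_1k_2\sim -N^2$ from the $(\partial_xu_1)(\partial_xu_2)$ term together with the output modulation $\langle\tau-k^3\rangle\sim N^2$ at $k=1$, and deduce $N\lesssim N^{2s}$, which fails for $s<\tfrac12$. The only (immaterial) difference is that the paper symmetrizes the Fourier supports over $\pm N$ and $\pm(1-N)$ so that the $u_j$ are real-valued, whereas you work with one-sided supports.
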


\begin{Theorem}\label{Thm2}
Let $\frac{1}{6}<s<\frac{1}{2}$, $\int_{\mathbf{T}} u_{0}dx=0$   and $u_{0}$ be $2\pi\lambda$-periodic
function.
Then the Cauchy problems (\ref{1.01})(\ref{1.02})
are locally well-posed in $H^{s}(\mathbf{T})$ for small initial data.
\end{Theorem}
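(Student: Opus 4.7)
My plan is to recast (1.01)--(1.02) as the Duhamel integral equation
\begin{eqnarray*}
u(t)=\eta(t)S(t)u_{0}-\eta(t)\int_{0}^{t}S(t-t')F(u,u)(t')\,dt',
\end{eqnarray*}
with $F$ as in (1.06), and to run a contraction-mapping argument for the associated map $\Phi(u)$ in a small ball of the restricted space $Z^{s}([0,T])$. The choice of $Z^{s}$ rather than the usual $X_{s,1/2}$ (i.e., $W^{s}$) is forced by Theorem 1.1: since the bilinear estimate fails in $W^{s}$ for $s<1/2$, one must compensate by weakening the weight on $\langle\tau-k^{3}\rangle$ in the region $D_{2}$ (where the modulation is comparable to $|k|^{3}$ and the standard resonance identity $\sigma-\sigma_{1}-\sigma_{2}=3kk_{1}k_{2}$ cannot rescue us), by strengthening the weight in the low-modulation zone $D_{1}\cup D_{5}$ (through the exponent $5/6-\epsilon$), and by inserting the auxiliary $Y^{s}$ piece that will control the $L^{1}_{\tau}$ integration coming from Duhamel's formula.

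The first step is to prove the linear estimates
\begin{eqnarray*}
\|\eta(t)S(t)u_{0}\|_{Z^{s}}\leq C\|u_{0}\|_{H^{s}(\mathbf{T})},\qquad
\Bigl\|\eta(t)\int_{0}^{t}S(t-t')F(t')\,dt'\Bigr\|_{Z^{s}}\leq C\|\Lambda^{-1}F\|_{Z^{s}},
\end{eqnarray*}
following the standard Bourgain-type manipulation of the time cutoff together with a routine decomposition of the Duhamel operator into high and low $\tau$ regions; the $Y^{s}$ component is precisely what makes the second inequality hold after one loses a factor $\langle\sigma\rangle^{-1}$. The second step is to establish a family of Strichartz-type estimates (for instance, refinements of $L^{4}_{t,x}$, $L^{6}_{t,x}$ and Kato-smoothing bounds for the group $S(t)$ on $\mathbf{T}$), adapted to each of the five regions $D_{1},\ldots,D_{5}$. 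These estimates, combined with the embedding $Z^{s}\hookrightarrow C([0,T];H^{s})$ inherited from the $Y^{s}$ component, will be the analytic backbone of the nonlinear estimate.

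The heart of the argument, and the main obstacle, is the bilinear estimate
\begin{eqnarray*}
\|\Lambda^{-1}F(u_{1},u_{2})\|_{Z^{s}}\leq C\|u_{1}\|_{Z^{s}}\|u_{2}\|_{Z^{s}},
\qquad \tfrac{1}{6}<s<\tfrac{1}{2},
\end{eqnarray*}
for the full nonlinearity $F(u_{1},u_{2})=\tfrac{1}{2}\partial_{x}(u_{1}u_{2})+\partial_{x}(1-\partial_{x}^{2})^{-1}[u_{1}u_{2}+\tfrac{1}{2}(\partial_{x}u_{1})(\partial_{x}u_{2})]$. I would dyadically localize both factors in $|k_{j}|$ and in $|\sigma_{j}|$, then run a case analysis driven by the resonance identity $\sigma-\sigma_{1}-\sigma_{2}=3kk_{1}k_{2}$: (i) when at least one $\sigma_{j}$ dominates, one pulls out $\langle\sigma_{\max}\rangle$ to recover the needed regularity and finishes with a Cauchy--Schwarz/convolution argument; (ii) when all $\sigma_{j}$ are small so that $\min(|k|,|k_{1}|,|k_{2}|)$ is controlled, one applies the Strichartz estimates from step two; (iii) the genuinely delicate case is the high$\times$high$\to$low interaction producing output in $D_{2}$, which forces the exponent $s>1/6$ and the $Y^{s}$ correction, and it is here that the main counting/orthogonality lemma (analogous to those of \cite{IKT,KT,NTT}) enters. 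The contribution of the nonlocal second term is handled together because $(1-\partial_{x}^{2})^{-1}\partial_{x}$ is a smoothing operator of order $-1$, and the worst piece $\partial_{x}(1-\partial_{x}^{2})^{-1}[\partial_{x}u_{1}\partial_{x}u_{2}]$ has the same scaling as $\partial_{x}(u_{1}u_{2})$.

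With these three ingredients in hand, the standard argument closes: for $\|u_{0}\|_{H^{s}}$ sufficiently small we obtain $\|\Phi(u)\|_{Z^{s}}\leq C\|u_{0}\|_{H^{s}}+C\|u\|_{Z^{s}}^{2}$ and $\|\Phi(u)-\Phi(v)\|_{Z^{s}}\leq C(\|u\|_{Z^{s}}+\|v\|_{Z^{s}})\|u-v\|_{Z^{s}}$, so that $\Phi$ contracts on a ball of radius $R\sim\|u_{0}\|_{H^{s}}$ in $Z^{s}([0,T])$ for some $T>0$. The unique fixed point is the desired solution, and continuous dependence on the data follows from the same contraction estimate. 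The smallness hypothesis enters precisely because the subcritical scaling gain is absent in the periodic low-regularity setting, so one cannot absorb the nonlinear term by taking $T$ small alone.
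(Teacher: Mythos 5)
Your plan is correct and follows essentially the same route as the paper: the Duhamel reformulation with cutoff $\eta$, the linear estimates of Lemmas 2.4 and 2.6, the bilinear estimate in $Z^{s}$ proved by decomposing into frequency-interaction regions and the three modulation cases of the resonance identity (Lemmas 3.1--3.5, resting on the bilinear $L^{2}$ Strichartz estimate of Lemmas 2.1--2.3), and a contraction on a ball of radius $\sim\|u_{0}\|_{H^{s}}$ for small data. Your identification of the role of the $Y^{s}$ component and of the high$\times$high$\to$low interaction as the source of the constraint $s>\frac{1}{6}$ matches the paper's Remarks 2--4.
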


\noindent {\bf Remark 1.} From Theorem 1.2,  we know that Theorem 1.1 does not imply ill-posedness of (\ref{1.01}) for $s<\frac{1}{2}.$ We will pursue the optimal regularity indices for (\ref{1.01}).

\noindent {\bf Remark 2.} We make a commentary  on the particular
choice of parameters in the $Z^{s}$ function space. Combining  Lemma 2.4  with (2.4) of Lemma 2.3, $\Omega_{j}(1\leq j\leq8),$   we know that  choosing function space  $X_{s,\frac{5}{6}-\epsilon}$ in regions $D_{1}\cup D_{5}$ is sufficient due to Lemma 2.7.
Combining high$\times$ high$\rightarrow$ low interaction with $\Omega_{j}(1\leq j\leq8)$, we know that  choosing function space $X_{1-s,s}$ in regions $D_{3}\cup D_{4}$ is sufficient
due to Lemma 2.7. By using a direct computation and fixing spaces $X_{s,\frac{5}{6}-\epsilon}$ closely related to $D_{1}\cup D_{5}$ and $X_{1-s,s}$  closely related to $D_{3}\cup D_{4}$, we know that $X_{1-s,s+\frac{1}{3}-\epsilon}$
is the suitable function space for the region $D_{2}$ in view of Lemma 2.7.

The rest of the paper is arranged as follows. In Section 2,  we give some
preliminaries. In Section 3, we establish some important  bilinear estimates. In Section 4,
we give the proof of Theorem 1.1.
In Section 5, we give the proof of Theorem 1.2.

\bigskip
\bigskip
\bigskip

 \noindent{\large\bf 2. Preliminaries}

\setcounter{equation}{0}

\setcounter{Theorem}{0}

\setcounter{Lemma}{0}

\setcounter{section}{2}

In this section, we give some preliminaries which are crucial in establishing
 Lemmas \ref{Lemma3.1}-3.5 and Theorems 1.1,1.2.

\begin{Lemma}\label{Lemma2.1}
Let $u_{l}$ with $l=1,2$ be $L^{2}([0,2\pi)\times \R)$-real valued functions.
Then for any $(l_{1},l_{2}) \in \N^{2}$,
\begin{eqnarray}
      \left\|(\Psi_{l_{1}}u_{1})*(\Psi_{l_{2}}u_{2})\right\|_{L_{k\tau}^{2}}
      \leq C\left(2^{l_{1}}
      \wedge2^{l_{2}}\right)^{\frac{1}{2}}\left(2^{l_{1}}\vee2^{l_{2}}\right)^{\frac{1}{6}}
      \|\Psi_{l_{1}}u_{1}\|_{L^{2}}\|\Psi_{l_{2}}u_{2}\|_{L^{2}}.
        \label{2.01}
\end{eqnarray}
\end{Lemma}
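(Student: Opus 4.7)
The plan is to prove Lemma \ref{Lemma2.1} via the standard Cauchy--Schwarz/support-counting argument for bilinear $L^{2}$ estimates in Bourgain-type spaces for dispersive equations. Set $f_{j}:=\Psi_{l_{j}}u_{j}$, localized in the modulation variable $\sigma_{j}:=\tau_{j}-k_{j}^{3}$ to $|\sigma_{j}|\sim 2^{l_{j}}$, and assume without loss of generality that $l_{1}\le l_{2}$. The target bound then reads $\|f_{1}*f_{2}\|_{L^{2}_{k\tau}}\lesssim 2^{l_{1}/2}\,2^{l_{2}/6}\,\|f_{1}\|_{L^{2}}\|f_{2}\|_{L^{2}}$.

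First I would apply Cauchy--Schwarz in the convolution variables $(k_{1},\tau_{1})$ to the pointwise identity $(f_{1}*f_{2})(k,\tau)=\int f_{1}(k-k_{1},\tau-\tau_{1})\,f_{2}(k_{1},\tau_{1})\,dk_{1}\,d\tau_{1}$, which yields
\[
\|f_{1}*f_{2}\|_{L^{2}_{k\tau}}^{2}\ \le\ \Bigl(\sup_{(k,\tau)}|\Omega(k,\tau)|\Bigr)\,\|f_{1}\|_{L^{2}}^{2}\,\|f_{2}\|_{L^{2}}^{2},
\]
where $\Omega(k,\tau)$ denotes the set of $(k_{1},\tau_{1})$ on which both convolution factors are non-zero. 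The task thus reduces to showing $\sup_{(k,\tau)}|\Omega(k,\tau)|\lesssim 2^{l_{1}}\cdot 2^{l_{2}/3}$.

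For each fixed $k_{1}$, the admissible $\tau_{1}$ lies in the intersection of two intervals of lengths $\sim 2^{l_{1}}$ and $\sim 2^{l_{2}}$, hence in a set of measure $\lesssim 2^{l_{1}}$. To count the admissible $k_{1}$, I would exploit the KdV resonance identity
\[
\sigma-\sigma_{1}-\sigma_{2}\ =\ -\bigl(k^{3}-k_{1}^{3}-(k-k_{1})^{3}\bigr)\ =\ -3k\,k_{1}(k-k_{1}),
\]
which, combined with $|\sigma_{j}|\lesssim 2^{l_{j}}$, forces $|3k\,k_{1}(k-k_{1})+\sigma|\lesssim 2^{l_{2}}$. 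This is a quadratic inequality in $k_{1}$ whose real solution set has Lebesgue measure $\lesssim(2^{l_{2}}/|k|)^{1/2}$. In the regime $|k|\gtrsim 2^{l_{2}/3}$ this already gives the count $\lesssim 2^{l_{2}/3}$, while in the complementary low-$|k|$ regime one must invoke a divisor-type bound on the number of integer factorizations $k_{1}(k-k_{1})\approx\mathrm{const}$ in $\dot{Z}$ to achieve the same count. Together these yield $|\Omega|\lesssim 2^{l_{1}}\cdot 2^{l_{2}/3}$ uniformly in $(k,\tau)$, and hence the claimed bilinear bound.

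The chief obstacle is the $k_{1}$-counting step in the small-$|k|$ regime: the naive Lebesgue estimate there produces only $\sim 2^{l_{2}/2}$ lattice points, which is weaker than the needed $2^{l_{2}/3}$. Sharpening this count requires exploiting the integer structure of $\dot{Z}$ through a divisor-function type estimate on the number of representations of a given integer as a product $k_{1}(k-k_{1})$ with prescribed sum $k_{1}+k_{2}=k$. This arithmetic input is characteristic of periodic KdV estimates and is what produces the extra loss $(2^{l_{1}\vee l_{2}})^{1/6}$ absent from the cleaner $\mathbb{R}$-analogue.
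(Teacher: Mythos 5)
The paper does not actually prove Lemma 2.1; it defers to Lemma A.1 of \cite{Molinet} (see also \cite{LYLH}), so your proposal must be measured against that argument. Your reduction via Cauchy--Schwarz to the bound $\sup_{(k,\tau)}|\Omega(k,\tau)|\lesssim 2^{l_1}\cdot 2^{l_2/3}$, and the treatment of the regime $|k|\gtrsim 2^{l_2/3}$, are fine and agree with the standard argument. The gap is in the low-$|k|$ regime, and it is not fixable by the divisor-type bound you invoke, because the uniform estimate $\sup_{(k,\tau)}|\Omega(k,\tau)|\lesssim 2^{l_1}2^{l_2/3}$ is simply \emph{false}. Take $k=1$ and $\tau=1$ (so $\sigma=0$): the compatibility condition $|3kk_1(k-k_1)+\sigma|\lesssim 2^{l_2}$ becomes $|k_1(1-k_1)|\lesssim 2^{l_2}$, which admits every $k_1$ with $|k_1|\lesssim 2^{l_2/2}$, and for each such $k_1$ the $\tau_1$-fibre has measure $\sim 2^{l_1}$; hence $|\Omega(1,1)|\gtrsim 2^{l_1}2^{l_2/2}$. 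A divisor bound controls the number of solutions of $k_1(k-k_1)=n$ for a \emph{fixed} integer $n$ (which is already $O(1)$ here, since $k_1$ is a root of a quadratic), but the point is that $n$ ranges over an interval of length $\sim 2^{l_2}/|k|$, and the lattice-point count $\sim(2^{l_2}/|k|)^{1/2}$ is genuinely attained. So no arithmetic refinement of the counting at fixed output $(k,\tau)$ can reach $2^{l_2/3}$; with your scheme one only obtains the weaker exponent $(2^{l_1}\vee 2^{l_2})^{1/4}$.

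The missing idea is that in the high$\times$high$\rightarrow$low regime the Cauchy--Schwarz must be re-paired using the symmetry of the underlying trilinear form $\sum_{k_1+k_2=k}\int f_1f_2\bar g$ (equivalently, one estimates $\|\tilde f_1 * g\|_{L^2}$ by duality), so that the counting is performed with the \emph{largest} of the three frequencies in the denominator of the resonance relation $\sigma-\sigma_1-\sigma_2=3kk_1k_2$. Concretely, when $|k|\le 2^{l_2/3}$ one distinguishes: (i) if also $|k_1|\sim|k_2|\le 2^{l_2/3}$, the admissible $k_1$ are trivially at most $O(2^{l_2/3})$ in number; (ii) if $|k_1|\sim|k_2|=N\ge 2^{l_2/3}$, the dual pairing yields a count $\lesssim 1+(2^{l_2}/N)^{1/2}\lesssim 2^{l_2/3}$. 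This duality/symmetrization step (which is exactly how the cited Lemma A.1, and Tao's general multilinear $L^2$ convolution framework, handle the periodic KdV resonance) is absent from your proposal, and without it the claimed bound does not follow. A secondary remark: the exponent $1/6$ arises simply as half of the lattice-point count exponent $1/3$; it is not produced by divisor-function arithmetic.
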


For the proof of  Lemma 2.1, we refer the readers to Lemma A.1 of \cite{Molinet,LYLH}.

\begin{Lemma}\label{Lemma2.2}
Let $u(x,t),v(x,t)$ be  $2\pi$-periodic functions and $a+b\geq \frac{2}{3}$
and ${\rm min}\{a,b\}>\frac{1}{6}$. Then, we have that
\begin{eqnarray}
     && \left\|uv\right\|_{L_{xt}^{2}}\leq C\|u\|_{X_{0,a}([0,2\pi) \times \SR)}
     \|v\|_{X_{0,b}([0,2\pi) \times \SR)},
       \label{2.02}\\
       &&\left\|uv\right\|_{X_{0,-a}}\leq C\|u\|_{X_{0,b}([0,2\pi) \times \SR)}
       \|v\|_{L_{xt}^{2}}.
       \label{2.03}
\end{eqnarray}
\end{Lemma}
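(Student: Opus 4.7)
The plan is to obtain (2.2) from Lemma 2.1 via a dyadic decomposition in the modulation variable, and then to deduce (2.3) from (2.2) by a duality argument.

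For (2.2), I would first note that by Plancherel and the identity $\mathscr{F}(uv)=\mathscr{F}u * \mathscr{F}v$ (on $\dot Z \times \R$ with the appropriate measures), it suffices to estimate convolutions on the Fourier side. Write $u=\sum_{l_1\geq 0} u_{l_1}$ and $v=\sum_{l_2\geq 0} v_{l_2}$, where $u_{l_1}$ is the projection of $u$ onto the region $\langle \tau+P(k)\rangle\sim 2^{l_1}$ (that is, $\mathscr{F}u_{l_1}=\Psi_{l_1}(\tau+P(k))\mathscr{F}u$), and similarly for $v_{l_2}$. Since the operator $e^{-itP(D_x)}$ is an $L^2$-isometry, one checks that $\mathscr{F}u_{l_1}$ and $\mathscr{F}v_{l_2}$ fit the template of Lemma 2.1 (after the standard change of variables $\tau\mapsto \tau+P(k)$ which does not affect $L^2$ norms or convolution structure on the $\tau$ line). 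Applying Lemma 2.1 gives
\begin{equation*}
\|u_{l_1}v_{l_2}\|_{L^2_{xt}} \le C (2^{l_1}\wedge 2^{l_2})^{1/2}(2^{l_1}\vee 2^{l_2})^{1/6}\|u_{l_1}\|_{L^2}\|v_{l_2}\|_{L^2}.
\end{equation*}

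The second step is to insert the Bourgain weights: by definition $\|u_{l_1}\|_{L^2}\sim 2^{-al_1}\|u_{l_1}\|_{X_{0,a}}$ and $\|v_{l_2}\|_{L^2}\sim 2^{-bl_2}\|v_{l_2}\|_{X_{0,b}}$. Setting $c_{l_1}=\|u_{l_1}\|_{X_{0,a}}/\|u\|_{X_{0,a}}$ and $d_{l_2}=\|v_{l_2}\|_{X_{0,b}}/\|v\|_{X_{0,b}}$, we get sequences in $\ell^2$, and after triangle inequality and symmetry it suffices to bound
\begin{equation*}
\sum_{l_1\le l_2} 2^{l_1(1/2-a)}2^{l_2(1/6-b)}c_{l_1}d_{l_2}.
\end{equation*}
Applying Cauchy--Schwarz in $l_2$ and using $b>1/6$ to sum the geometric series $\sum_{l_2\ge l_1}2^{2l_2(1/6-b)}\lesssim 2^{2l_1(1/6-b)}$, this reduces to $\sum_{l_1} 2^{l_1(2/3-a-b)}c_{l_1}\|d\|_{\ell^2}$; one final Cauchy--Schwarz (plus the standard trick of shaving off an $\varepsilon$, or an extra summation lemma, when $a+b=2/3$) uses the hypothesis $a+b\ge 2/3$ to close. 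The symmetric case $l_2\le l_1$ uses $a>1/6$.

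The estimate (2.3) follows by duality. Since $X_{0,-a}=(X_{0,a})^*$ with respect to the standard $L^2_{xt}$ pairing, one has
\begin{equation*}
\|uv\|_{X_{0,-a}}=\sup_{\|w\|_{X_{0,a}}=1}\Big|\int_{\R}\int_0^{2\pi}u(x,t)v(x,t)w(x,t)\,dx\,dt\Big|,
\end{equation*}
and by Cauchy--Schwarz in $L^2_{xt}$ followed by (2.2) applied to the pair $(u,w)$ with parameters $(b,a)$ (which still satisfy the hypotheses), the right-hand side is bounded by $\|v\|_{L^2_{xt}}\|uw\|_{L^2_{xt}}\le C\|v\|_{L^2_{xt}}\|u\|_{X_{0,b}}\|w\|_{X_{0,a}}$, giving (2.3).

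The main technical obstacle is the endpoint behavior at $a+b=2/3$ or $\min\{a,b\}=1/6$: Lemma 2.1 is sharp, so the double dyadic sum barely converges and one must use Cauchy--Schwarz carefully (rather than a Schur-type bound) to avoid a logarithmic loss; everything else is bookkeeping.
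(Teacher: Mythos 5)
Your strategy is exactly the paper's: decompose both factors dyadically in the modulation variable, apply Lemma 2.1 blockwise, reinsert the weights $2^{al_1}$, $2^{bl_2}$, sum, and obtain (2.3) from (2.2) by duality against $X_{0,a}=(X_{0,-a})^{*}$. The only point where your write-up does not quite close is the endpoint $a+b=\frac{2}{3}$, which the hypothesis explicitly allows. In your order of summation (Cauchy--Schwarz in $l_2$ for fixed $l_1$, then in $l_1$) the second step leaves $\sum_{l_1}2^{l_1(2/3-a-b)}c_{l_1}=\sum_{l_1}c_{l_1}$ when $a+b=\frac{2}{3}$, and an $\ell^2$ sequence need not be summable; moreover there is no $\varepsilon$ to shave, since the exponents $\frac12$ and $\frac16$ in Lemma 2.1 and the threshold $\frac23$ are fixed. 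The paper's summation avoids this by grouping along diagonals: writing $M_1=NM_2$ with $N\geq 1$, the factor splits as $N^{\frac16-a}M_2^{\frac23-a-b}$, one bounds $M_2^{\frac23-a-b}\leq 1$, applies Cauchy--Schwarz over $M_2$ for each fixed ratio $N$ (which yields exactly $\|u\|_{X_{0,a}}\|v\|_{X_{0,b}}$ with no loss), and then sums the geometric series in $N$ using only the strict inequality $\min\{a,b\}>\frac16$. So the fix is not an extra lemma but simply performing the Cauchy--Schwarz for fixed $l_1-l_2$ rather than for fixed $l_1$; with that reordering your argument coincides with the paper's and is complete. The duality step for (2.3) is the same as in the paper and is fine.
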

\begin{proof}From Lemma 2.1, we have that
\begin{eqnarray*}
&&\|uv\|_{L_{xt}^{2}}\leq C \sum_{l_{1}\geq 0}\sum_{l_{2}\geq 0}
\left\|(\Psi_{l_{1}}u)*(\Psi_{l_{2}}v)\right\|_{L_{xt}^{2}}\nonumber\\
&&\leq C\sum_{l_{1}\geq 0}\sum_{l_{2}\geq 0}\left(2^{l_{1}}
      \wedge2^{l_{2}}\right)^{1/2}\left(2^{l_{1}}\vee2^{l_{2}}\right)^{\frac{1}{6}}
      \|\Psi_{l_{1}}u\|_{L^{2}}\|\Psi_{l_{2}}v\|_{L^{2}}  .
\end{eqnarray*}
Let $M_{j}=2^{l_{j}}$  with  $j=1,2.$ Without loss of generality, we
can assume that $M_{1}\geq M_{2}$
and $M_{1}=NM_{2}$ and $u_{M_{1}}=\Psi_{l_{1}}u$ and $v_{M_{2}}=\Psi_{l_{2}}v$, then we have that
\begin{eqnarray*}
&&\|uv\|_{L_{xt}^{2}}\leq  C\sum_{M_{1},M_{2}\geq 1}M_{1}^{\frac{1}{6}}
      M_{2}^{1/2}\|u_{M_{1}}\|_{L^{2}}\|v_{M_{2}}\|_{L^{2}}\nonumber\\
      &&\leq C\sum_{N,M_{2}\geq 1}M_{2}^{\frac{2}{3}}N^{\frac{1}{6}}
      \|u_{NM_{2}}\|_{L^{2}}\|v_{M_{2}}\|_{L^{2}}\nonumber\\
      &&\leq C\sum_{M_{2},N\geq 1}N^{\frac{1}{6}-a}M_{2}^{\frac{2}{3}-a-b}
      (NM_{2})^{a}\|u_{NM_{2}}\|_{L^{2}}M_{2}^{b}\|v_{M_{2}}\|_{L^{2}}\nonumber\\
      &&\leq C\|u\|_{X_{0,a}([0,2\pi) \times \SR)}\|v\|_{X_{0,b}([0,2\pi) \times \SR)}.
\end{eqnarray*}
We can derive (\ref{2.03})  by duality.
\end{proof}

We have completed the proof of Lemma 2.2.

\begin{Lemma}\label{Lemma2.3}
Let $u(x,t),v(x,t)$ be  $2\pi\lambda$-periodic functions and $a+b\geq \frac{2}{3}$
and ${\rm min}\{a,b\}>\frac{1}{6}$. Then
\begin{eqnarray}
     && \left\|uv\right\|_{L_{xt}^{2}}\leq C\|u\|_{X_{0,a}(\mathbf{T} \times \SR)}
     \|v\|_{X_{0,b}(\mathbf{T} \times \SR)},
       \label{2.04}\\
       &&\left\|uv\right\|_{X_{0,-a}}\leq C\|u\|_{X_{0,b}(\mathbf{T} \times \SR)}
       \|v\|_{L_{xt}^{2}}.
       \label{2.05}
\end{eqnarray}
\end{Lemma}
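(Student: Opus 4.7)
The plan is to mimic the proof of Lemma 2.2 verbatim, replacing the $2\pi$-periodic bilinear convolution bound of Lemma 2.1 by its $2\pi\lambda$-periodic analogue, and verifying that the resulting constant $C$ does not depend on $\lambda\geq 1$.

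First I would observe that Lemma 2.1 extends to functions on $\mathbf{T}\times\R$ (with $\mathbf{T}=\R/2\pi\lambda$) with a constant uniform in $\lambda\geq 1$. The references \cite{Molinet,LYLH} already establish the underlying convolution estimate on arbitrary scaled tori $\dot Z_{\lambda}\times\R$ equipped with the normalized counting measure $(dk)_{\lambda}$; in fact, since sending $\lambda\to\infty$ takes the discrete setting to the real line, such bounds can only improve with $\lambda$, which is precisely the reason the constant stays bounded. Equivalently, one can argue by the scaling $u_{\lambda}(x,t)=u(\lambda x,\lambda^{3}t)$, which sends $2\pi\lambda$-periodic functions to $2\pi$-periodic ones and is compatible with the Airy symbol $P(k)=-k^{3}$; after tracking the Jacobian factors on both sides of the target inequality one sees that all powers of $\lambda$ cancel.

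Next, with this $\lambda$-uniform bilinear bound in hand, I would repeat the calculation in Lemma 2.2 line by line. Decompose $u$ and $v$ in the modulation variable as $u=\sum_{l_{1}\geq 0}\Psi_{l_{1}}u$ and $v=\sum_{l_{2}\geq 0}\Psi_{l_{2}}v$, set $M_{j}=2^{l_{j}}$, and apply the $2\pi\lambda$-periodic version of (2.01) to each piece. After separating the $(M_{1},M_{2})$ sum into $M_{1}\geq M_{2}$ and its symmetric counterpart and writing $M_{1}=NM_{2}$, the summand is estimated by
\begin{equation*}
N^{\frac{1}{6}-a}M_{2}^{\frac{2}{3}-a-b}(NM_{2})^{a}\|u_{NM_{2}}\|_{L^{2}}M_{2}^{b}\|v_{M_{2}}\|_{L^{2}},
\end{equation*}
which is summable in $(N,M_{2})$ precisely under the hypotheses $a+b\geq\frac{2}{3}$ and $\min(a,b)>\frac{1}{6}$; Cauchy--Schwarz then yields (2.04). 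The estimate (2.05) follows by duality, as in Lemma 2.2.

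The main obstacle is the first step: one must be sure that the bilinear convolution estimate from Lemma 2.1 really does pass to the $2\pi\lambda$-periodic setting with a $\lambda$-independent constant. This is a standard fact in the literature on KdV on scaled tori (going back to Bourgain's work and used extensively in \cite{Molinet,LYLH}), and the scaling argument mentioned above makes it elementary once the Jacobians for $(x,t)\mapsto(\lambda x,\lambda^{3}t)$ are matched against the scalings of the counting measure $(dk)_{\lambda}$. Everything after that is the same dyadic sum already carried out in the proof of Lemma 2.2.
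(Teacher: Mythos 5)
Your proposal is correct and follows essentially the same route as the paper, whose entire proof is the one-line remark that Lemma 2.3 follows from the technique of Lemma 3.4 of \cite{Molinet} together with Lemma 2.2 — i.e., exactly your plan of establishing the $\lambda$-uniform analogue of the bilinear convolution bound on the scaled torus and then repeating the dyadic summation of Lemma 2.2. You in fact supply more detail than the paper does.
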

\begin{proof}By using a similar technique of Lemma 3.4 in \cite{Molinet} and
Lemma 2.2, we  obtain Lemma 2.3.
\end{proof}

We have completed the proof of Lemma 2.3.

\begin{Lemma}\label{Lemma2.4}Let $s\in \R$. Then, we have that
\begin{eqnarray*}
\left\| \eta(t)\int_{0}^{t}S(t-\tau)F(\tau)d\tau\right\|_{Z^{s}}
\leq C\|\Lambda^{-1}F\|_{Z^{s}}.
\end{eqnarray*}
\end{Lemma}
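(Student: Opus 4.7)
The plan is to reduce the bound to standard linear estimates for the group $S(t)$ in the Bourgain spaces $X_{s,b}$ and the auxiliary space $Y^{s}$, by decomposing the Duhamel operator in Fourier space. Using the space-time Fourier representation of $F$, one checks the identity
\begin{eqnarray*}
\int_{0}^{t}S(t-s)F(s)\,ds
=\int\int e^{i kx}e^{i tk^{3}}\frac{e^{i t\sigma}-1}{i\sigma}\mathscr{F}F(k,\tau)(dk)_{\lambda}\,d\tau,
\end{eqnarray*}
where $\sigma=\tau+P(k)$. Introducing a smooth cutoff $\chi$ with $\chi\equiv 1$ on $\{|\sigma|\geq 2\}$ and $\supp\chi\subset\{|\sigma|\geq 1\}$, I split the multiplier $(e^{i t\sigma}-1)/(i\sigma)$ into a large-$\sigma$ piece weighted by $\chi$ and a small-$\sigma$ piece weighted by $1-\chi$, so that the lemma reduces to bounding each of the two resulting pieces in $Z^{s}$ by $\|\Lambda^{-1}F\|_{Z^{s}}$.

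For the large-$\sigma$ piece I further split
\begin{eqnarray*}
\chi(\sigma)\,\frac{e^{i t\sigma}-1}{i\sigma}
=\frac{\chi(\sigma)\,e^{i t\sigma}}{i\sigma}-\frac{\chi(\sigma)}{i\sigma}.
\end{eqnarray*}
The first term contributes a function whose space-time Fourier transform equals $\chi(\sigma)(i\sigma)^{-1}\mathscr{F}F(k,\tau)$, which is essentially $\Lambda^{-1}F$ cut off to $\{|\sigma|\geq 1\}$ with the same $\sigma$-support as $F$; its norm in each region $D_{j}$ is directly bounded by the corresponding summand of $\|\Lambda^{-1}F\|_{Z^{s}}$ (multiplication by $\eta(t)$ only convolves in $\tau$ by the Schwartz function $\mathscr{F}_{t}\eta$, which is harmlessly absorbed). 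The second term is $t$-independent, so multiplying by $\eta(t)$ produces a truncated homogeneous solution $\eta(t)S(t)\phi$ with $\mathscr{F}_{x}\phi(k)=\int\chi(\sigma)(i\sigma)^{-1}\mathscr{F}F(k,\tau)\,d\tau$. Cauchy--Schwarz in $\tau$ combined with the definition of $Y^{s}$ yields $\|\phi\|_{H^{s}}\leq C\|\Lambda^{-1}F\|_{Y^{s}}\leq C\|\Lambda^{-1}F\|_{Z^{s}}$. For the small-$\sigma$ piece I Taylor expand $(e^{i t\sigma}-1)/(i\sigma)=\sum_{n\geq 1}(i t)^{n}\sigma^{n-1}/n!$; because $1-\chi(\sigma)$ confines $\sigma$ to $|\sigma|\leq 2$, each factor $\sigma^{n-1}$ is harmless and one obtains a convergent series of terms of the form $(n!)^{-1}\eta(t)t^{n}S(t)\phi_{n}$ with $\|\phi_{n}\|_{H^{s}}\leq C2^{n}\|\Lambda^{-1}F\|_{Y^{s}}$.

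Each homogeneous contribution $(n!)^{-1}\eta(t)t^{n}S(t)\phi_{n}$ is then estimated in $Z^{s}$ by the standard linear group estimates: its space-time Fourier transform factors as $(n!)^{-1}\mathscr{F}_{x}\phi_{n}(k)\,\mathscr{F}_{t}[\eta(t)t^{n}](\sigma)$, and the time factor is Schwartz with $\|\mathscr{F}_{t}[\eta(t)t^{n}]\|_{H^{b}\cap L^{1}}\leq C_{b}2^{n}n^{b+1}$, hence
\begin{eqnarray*}
\|(n!)^{-1}\eta(t)t^{n}S(t)\phi_{n}\|_{X_{s,b}}+\|(n!)^{-1}\eta(t)t^{n}S(t)\phi_{n}\|_{Y^{s}}\leq C_{b}\,4^{n}n^{b+1}(n!)^{-1}\|\Lambda^{-1}F\|_{Z^{s}},
\end{eqnarray*}
which is summable in $n$. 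Since the time Fourier factor is concentrated near $\sigma=0$, each such term sits essentially in the $D_{1}\cup D_{5}$ region and fits the $X_{s,\frac{5}{6}-\epsilon}$ slot of $Z^{s}$; spill-over to the other regions $D_{2}$, $D_{3}\cup D_{4}$ is controlled by the rapid decay of $\mathscr{F}_{t}\eta$ since $|\sigma|>2|k|^{2}$ forces $|\sigma|$ large.

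The main technical obstacle is keeping the partition $\{D_{1},\ldots,D_{5}\}$ consistent across the decomposition: the cutoff $\chi$ and the convolution by $\mathscr{F}_{t}\eta$ produce transition errors on $|\sigma|\sim 1$, and these must be absorbed into the $X_{s,\frac{5}{6}-\epsilon}(D_{1}\cup D_{5})$ slot rather than the $X_{1-s,\cdot}(D_{2}\cup D_{3}\cup D_{4})$ slots. This works because on $|\sigma|\sim 1$ all the $\langle\sigma\rangle^{b}$ weights entering the definition of $Z^{s}$ are comparable constants, and since $s<\frac{1}{2}$ the inequality $\langle k\rangle^{s}\leq\langle k\rangle^{1-s}$ together with $\frac{5}{6}-\epsilon>s+\frac{1}{3}-\epsilon$ guarantees that the $D_{1}\cup D_{5}$ slot dominates on this transition region. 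Apart from this bookkeeping, the proof follows the standard Ginibre--Tsutsumi paradigm for Duhamel estimates in Bourgain spaces.
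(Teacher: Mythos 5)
Your overall strategy is exactly the one the paper points to: for Lemma 2.4 the paper offers no proof of its own, only the citation to Bejenaru--Tao, and your decomposition of the Duhamel multiplier $(e^{it\sigma}-1)/(i\sigma)$ into a large-modulation piece (split further into $e^{it\sigma}\chi(\sigma)/(i\sigma)$ and $-\chi(\sigma)/(i\sigma)$) plus a Taylor-expanded small-modulation piece is precisely the standard argument behind that citation. Your treatment of the free-evolution terms $\frac{1}{n!}\eta(t)t^{n}S(t)\phi_{n}$, the use of the $Y^{s}$ component to control $\int\chi(\sigma)(i\sigma)^{-1}\mathscr{F}F\,d\tau$ by Cauchy--Schwarz, and the summation over $n$ are all correct, and the observation that these terms only reach $D_{2}\cup D_{3}\cup D_{4}$ through the rapidly decaying tail of $\mathscr{F}_{t}\eta$ (since there $|\sigma|>2|k|^{2}$) is sound.

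The gap is in the claim that multiplying the term with Fourier transform $\chi(\sigma)(i\sigma)^{-1}\mathscr{F}F(k,\tau)$ by $\eta(t)$ ``only convolves in $\tau$ by the Schwartz function $\mathscr{F}_{t}\eta$, which is harmlessly absorbed.'' For the piecewise-defined $Z^{s}$ this convolution is not harmless. You address the transition at $|\sigma|\sim 1$ and the $D_{1}/D_{2}$ interface, where the weights do match, but the dangerous interface is $|\sigma|=6|k|^{3}$ between $D_{2}$ and $D_{3}$: there the $D_{2}$ weight $\langle k\rangle^{1-s}\langle\sigma\rangle^{s+\frac{1}{3}-\epsilon}$ exceeds the $D_{3}$ weight $\langle k\rangle^{1-s}\langle\sigma\rangle^{s}$ by a factor $\sim\langle k\rangle^{1-3\epsilon}$, while convolution with $\mathscr{F}_{t}\eta$ moves an $O(1)$ amount of mass in $\sigma$ across this interface. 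Concretely, if $\mathscr{F}F$ is supported at $k=N$ and $6N^{3}<\sigma\leq 6N^{3}+2$ (inside $D_{3}$), then $\|\Lambda^{-1}F\|_{Z^{s}}\sim N^{2s-2}\|\mathscr{F}F\|_{L^{2}}$, whereas the portion of $\eta(t)\int_{0}^{t}S(t-t')F(t')\,dt'$ that lands in $D_{2}$ just below the interface already has $Z^{s}$ norm $\gtrsim N^{2s-1-3\epsilon}\|\mathscr{F}F\|_{L^{2}}$, so the asserted bound loses a factor $N^{1-3\epsilon}$. This is not a defect of your strategy alone: because the $Z^{s}$ weights are discontinuous across $\partial D_{2}\cap\partial D_{3}$, no argument that treats $u\mapsto\eta(t)u$ as bounded on $Z^{s}$ can close this step; one must either mollify the regions or adjust the exponents so the weights are comparable at $|\sigma|\sim 6|k|^{3}$, or give a separate argument for Fourier mass concentrated in that transition layer. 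Since the paper hides this entirely behind the reference to Bejenaru--Tao (whose spaces do satisfy the required compatibility), your write-up makes visible a point that the paper itself does not resolve.
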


For the proof of Lemma 2.4, we refer the readers to \cite{BT}.

\begin{Lemma}\label{Lemma2.5}Let $\frac{1}{6}+\epsilon\leq s\leq\frac{1}{2}-2\epsilon$. Then, we have that
\begin{eqnarray}
&&\|u\|_{X_{s,\frac{1}{6}+\epsilon}}\leq C\|u\|_{Z^{s}}
\leq C\|u\|_{X_{s,\frac{5}{6}-\epsilon}},\label{2.06}\\
&&\|u\|_{X_{s,\frac{1}{2}}(D_{1}\bigcup D_{2})}\leq
C\|u\|_{Z^{s}(D_{1}\bigcup D_{2})}.\label{2.07}
\end{eqnarray}
\end{Lemma}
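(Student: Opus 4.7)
The plan is to reduce both inequalities to pointwise comparisons of the Bourgain weights on each of the regions $D_1,\ldots,D_5$, supplemented by a Cauchy--Schwarz estimate in $\tau$ for the $Y^s$ component of $\|u\|_{Z^s}$. No harmonic analysis beyond elementary weight manipulations is needed, since the regions $D_j$ are designed precisely so that the dispersive relation between $|\sigma|$ and $|k|$ is frozen on each of them.

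For the upper bound $\|u\|_{Z^s}\leq C\|u\|_{X_{s,5/6-\epsilon}}$ I would control each of the four summands in $\|u\|_{Z^s}$ separately. The contribution from $D_1\cup D_5$ is immediate since the weights already match. On $D_2$ the comparison reduces to $\langle k\rangle^{1-2s}\langle\sigma\rangle^{s-1/2}\leq C$, which follows from $|\sigma|>2|k|^2$ together with $s\leq 1/2$. On $D_3\cup D_4$ the comparison reduces to $\langle k\rangle^{1-2s}\langle\sigma\rangle^{s-5/6+\epsilon}\leq C$; using $|\sigma|>6|k|^3$ this collapses to $\langle k\rangle^{s-3/2+3\epsilon}\leq C$, trivial since $s<1/2$ (and on the low-frequency piece $D_4$ one has $\langle k\rangle\sim 1$). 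The $Y^s$ piece is handled by bounding $\int|\mathscr{F}u(k,\tau)|\,d\tau$ via Cauchy--Schwarz against $\langle\sigma\rangle^{-(5/6-\epsilon)}$, whose $L^2_\tau$ norm is finite because $2(5/6-\epsilon)>1$.

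For the lower bound $\|u\|_{X_{s,1/6+\epsilon}}\leq C\|u\|_{Z^s}$ I would decompose $u=P_{D_1\cup D_5}u+P_{D_2}u+P_{D_3\cup D_4}u$, noting that these three pieces exhaust the Fourier support of $u$ on $\dot{Z}_\lambda\times\R$. On $D_1\cup D_5$ the bound is immediate because $1/6+\epsilon\leq 5/6-\epsilon$. On $D_2$ the required inequality is $\langle k\rangle^{2s-1}\leq C\langle\sigma\rangle^{s+1/6-2\epsilon}$, and on $D_3\cup D_4$ it is $\langle k\rangle^{2s-1}\leq C\langle\sigma\rangle^{s-1/6-\epsilon}$; both are trivial, because $s<1/2$ forces the $k$-factor to be $\leq 1$ while $s\geq 1/6+\epsilon$ keeps each $\sigma$-exponent nonnegative so that the $\sigma$-factor exceeds $1$. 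The final inequality $\|u\|_{X_{s,1/2}(D_1\cup D_2)}\leq C\|u\|_{Z^s(D_1\cup D_2)}$ is handled by the same reasoning: on $D_1$ one uses $1/2\leq 5/6-\epsilon$, and on $D_2$ one uses $\langle k\rangle^{2s-1}\leq 1\leq\langle\sigma\rangle^{s-1/6-\epsilon}$.

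The main obstacle is purely bookkeeping: one must verify that each of the exponents arising in these comparisons sits on the correct side of zero throughout the hypothesis range $1/6+\epsilon\leq s\leq 1/2-2\epsilon$, and that all constants remain uniform in $\lambda\geq 1$. The latter causes no trouble since on $D_4$ (where $1/\lambda\leq|k|\leq 1$) the weight $\langle k\rangle$ satisfies $1\leq\langle k\rangle\leq\sqrt{2}$, so all negative powers of it are absorbed into absolute constants independent of $\lambda$.
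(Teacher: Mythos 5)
Your proposal is correct and follows essentially the same route as the paper: region-by-region pointwise comparison of the weights $\langle k\rangle^{a}\langle\sigma\rangle^{b}$ using the defining inequalities $|\sigma|>2|k|^{2}$ on $D_{2}$ and $|\sigma|>6|k|^{3}$ on $D_{3}\cup D_{4}$, together with the sign conditions on the exponents coming from $\frac{1}{6}+\epsilon\leq s\leq\frac{1}{2}-2\epsilon$, and a Cauchy--Schwarz estimate in $\tau$ (exploiting $2(\frac{5}{6}-\epsilon)>1$) for the $Y^{s}$ component. The exponent bookkeeping in your reductions matches the paper's computations, so no further comment is needed.
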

\begin{proof} We firstly prove that (\ref{2.06}). When
$\supp \mathscr{F}u\subset D_{1}\cup D_{5}$,
since $\frac{5}{6}-\epsilon\geq \frac{1}{6}+2\epsilon,$ we have that
$\|u\|_{X_{s,\frac{5}{6}-\epsilon}}\geq \|u\|_{X_{s,\frac{1}{6}+\epsilon}}$.
When $\supp \mathscr{F}u\subset D_{2}$,
 since $\frac{1}{6}+\epsilon\leq s\leq\frac{1}{2}-2\epsilon$,  we have that
$\langle \sigma\rangle ^{s+\frac{1}{6}-3\epsilon}\geq C\langle k\rangle^{2s-1}$
which yields
that $\langle k\rangle^{1-s}\langle \sigma\rangle^{s+\frac{1}{3}-\epsilon}
\geq C\langle k\rangle^{s}\langle \sigma\rangle^{\frac{1}{6}+\epsilon}$,
thus, we have that $\|u\|_{X_{1-s,s+\frac{1}{3}-\epsilon}}\geq
\|u\|_{X_{s,\frac{1}{6}+2\epsilon}}$. When $\supp \mathscr{F}u\subset D_{3}\cup D_{4}$,
since $\frac{1}{6}+\epsilon\leq s\leq\frac{1}{2}-2\epsilon$,  we have that
$\langle \sigma\rangle ^{s-\frac{1}{6}-2\epsilon}\geq C\langle k\rangle^{2s-1}$ which yields
that $\langle k\rangle^{1-s}\langle \sigma\rangle^{s}\geq C\langle k\rangle^{s}\langle
\sigma\rangle^{\frac{1}{6}+2\epsilon}$,
thus, we have that $\|u\|_{X_{1-s,s}}\geq \|u\|_{X_{s,\frac{1}{6}+2\epsilon}}$.
Consequently, we have that $\|u\|_{Z^{s}}\geq C\|u\|_{X_{s,\frac{1}{6}+\epsilon}}.$
When $\supp \mathscr{F}u\subset D_{2}$,
 since $\frac{1}{6}+\epsilon\leq s\leq\frac{1}{2}-2\epsilon$,  we have that
$\langle \sigma\rangle ^{\frac{1}{2}-s}\geq C\langle k\rangle^{1-2s}$ which yields
that $\langle k\rangle^{1-s}\langle \sigma\rangle^{s+\frac{1}{3}-\epsilon}
\leq C\langle k\rangle^{s}
\langle \sigma\rangle^{\frac{5}{6}-\epsilon}$,
thus, we have that $\|u\|_{X_{1-s,s+\frac{1}{3}-\epsilon}}
\leq C\|u\|_{X_{s,\frac{5}{6}-\epsilon}}$.
When $\supp \mathscr{F}u\subset D_{3}\cup D_{4}$,
since $\frac{1}{6}+\epsilon\leq s\leq\frac{1}{2}-2\epsilon$,  we have that
$\langle \sigma\rangle ^{\frac{5}{6}-\epsilon-s}\geq
C\langle k\rangle^{1-2s}$ which yields
that $\langle k\rangle^{1-s}\langle \sigma\rangle^{s}
\leq C\langle k\rangle^{s}\langle \sigma\rangle^{\frac{5}{6}-\epsilon}$,
thus, we have that $\|u\|_{X_{s,\frac{5}{6}-\epsilon}}\geq C \|u\|_{X_{s,1-s}}$.
Consequently, we have that $\|u\|_{Z^{s}}\leq C\|u\|_{X_{s,\frac{5}{6}-\epsilon}}.$
By Cauchy-Schwartz inequality with respect to $\tau$, we have that
$\|\langle k\rangle^{s}\mathscr{F}u\|_{l_{k}^{2}l_{\tau}^{1}}\leq C
\|u\|_{X_{s,\frac{5}{6}-\epsilon}},$  consequently, we have that $\|u\|_{Z^{s}}
\leq C\|u\|_{X_{s,\frac{5}{6}-\epsilon}}.$  Now we prove (\ref{2.07}). When
$\supp \mathscr{F}u\subset D_{1}$, since $\frac{5}{6}-\epsilon\geq \frac{1}{2},$
we have that
$\|u\|_{X_{s,\frac{5}{6}-\epsilon}}\geq \|u\|_{X_{s,\frac{1}{2}}}$. When
$\supp \mathscr{F}u\subset D_{2}$, since $s\geq\epsilon+\frac{1}{6}$, we have that
$\langle k\rangle^{s}\langle\sigma\rangle^{1/2}\leq C\langle k\rangle
^{1-s}\langle\sigma\rangle^{s+\frac{1}{3}-\epsilon},$ consequently, we have that
$\|u\|_{X_{1-s,s+\frac{1}{3}-\epsilon}}\geq \|u\|_{X_{s,\frac{1}{2}}}$.
\end{proof}

We have completed the proof of Lemma 2.5.

\begin{Lemma}\label{Lemma2.6}
Assume that  $s\in \R$. Then, we have that
\begin{eqnarray*}
\left\| \eta(t)S(t)\phi\right\|_{Z^{s}}\leq C\|\phi\|_{H^{s}(\mathbf{T})}.
\end{eqnarray*}
\end{Lemma}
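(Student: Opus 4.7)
The plan is to exploit the fact that the space-time Fourier transform of $\eta(t)S(t)\phi$ factors into a pure $k$-part and a pure $\sigma$-part, and then to bound each summand in the definition of $\|\cdot\|_{Z^s}$ by a direct weighted $L^2$ or $L^1$ estimate on $\hat\eta$.

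Starting from $S(t)\phi(x)=\int e^{ikx}e^{itk^3}\mathscr{F}_x\phi(k)(dk)_\lambda$, multiplication by $\eta(t)$ corresponds on the Fourier side to convolution in $\tau$ with $\hat\eta$, giving
\begin{eqnarray*}
\mathscr{F}(\eta(t)S(t)\phi)(k,\tau) = \hat\eta(\tau-k^3)\mathscr{F}_x\phi(k) = \hat\eta(\sigma)\mathscr{F}_x\phi(k).
\end{eqnarray*}
This tensor-product structure is the entire engine of the proof: once inserted, every ingredient of $\|\cdot\|_{Z^s}$ collapses into (a weighted $L^2$ or $L^1$ norm of $\hat\eta$) times $\|\phi\|_{H^s(\mathbf{T})}$.

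For the $Y^s$ contribution and for the $P_{D_1\cup D_5}$ piece in $X_{s,5/6-\epsilon}$ the conclusion is immediate, because $\hat\eta$ is a Schwartz function so $\|\hat\eta\|_{L^1(d\tau)}$ and $\|\langle\sigma\rangle^{5/6-\epsilon}\hat\eta(\sigma)\|_{L^2_\tau}$ are finite constants. The factorization then yields both pieces bounded by $C\|\phi\|_{H^s(\mathbf{T})}$ with no further work.

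The two remaining pieces carry the weight $\langle k\rangle^{1-s}$ in place of $\langle k\rangle^{s}$, and when $s<\tfrac12$ there is an excess $\langle k\rangle^{2(1-2s)}$ that has to be absorbed somewhere. I would absorb it using the lower bound on $|\sigma|$ built into the definitions of $D_2$ and $D_3\cup D_4$: on $D_2$ one has $|\sigma|>2|k|^2$ and on $D_3\cup D_4$ one has $|\sigma|>6|k|^3$, so for $|k|\geq 1$ the Schwartz decay of $\hat\eta$ gives
\begin{eqnarray*}
\int_{|\sigma|>c|k|^\alpha}\langle\sigma\rangle^{2b}|\hat\eta(\sigma)|^2\,d\sigma \leq C_N\langle k\rangle^{-N}\qquad\text{for every }N\geq0,
\end{eqnarray*}
while for the bounded-$|k|$ parts of $D_2$ and $D_4$ (where $\langle k\rangle\sim 1$) the integral is trivially finite. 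Choosing $N\geq 2(1-2s)$ converts this decay into precisely the factor needed to turn $\langle k\rangle^{2(1-s)}$ back into $\langle k\rangle^{2s}$; combining with the factorization gives the corresponding $X_{1-s,s+1/3-\epsilon}$ and $X_{1-s,s}$ bounds in terms of $\|\phi\|_{H^s(\mathbf{T})}$.

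Summing the four estimates proves the lemma, and the argument evidently imposes no restriction on $s$, as the statement claims. There is no real obstacle here beyond the weight bookkeeping just indicated — the whole thing is driven by the Fourier factorization plus the rapid decay of $\hat\eta$ on the dyadic regions cut out by $D_2, D_3, D_4$.
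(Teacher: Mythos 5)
Your proof is correct, but it takes a different and more self-contained route than the paper. The paper disposes of this lemma in one line: it invokes the embedding $X_{s,\frac{5}{6}-\epsilon}\hookrightarrow Z^{s}$ from Lemma 2.5 and then relies on the standard free-evolution bound $\|\eta(t)S(t)\phi\|_{X_{s,\frac{5}{6}-\epsilon}}\leq C\|\phi\|_{H^{s}}$, which itself comes from the same tensor-product identity $\mathscr{F}(\eta S(t)\phi)(k,\tau)=\hat\eta(\sigma)\mathscr{F}_{x}\phi(k)$ that you start from. You instead bypass Lemma 2.5 entirely and estimate each of the four components of the $Z^{s}$ norm directly, absorbing the unfavorable weight $\langle k\rangle^{2(1-2s)}$ on the $D_{2}$ and $D_{3}\cup D_{4}$ pieces through the rapid decay of $\hat\eta$ on the sets where $|\sigma|\gtrsim|k|^{2}$ or $|k|^{3}$. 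What your approach buys is that it genuinely proves the statement for all $s\in\R$ as claimed, whereas the paper's one-line argument inherits the restriction $\frac{1}{6}+\epsilon\leq s\leq\frac{1}{2}-2\epsilon$ under which Lemma 2.5 is stated (harmless in practice, since that is the only range in which Lemma 2.6 is used, but your version matches the stated hypothesis exactly); what it costs is a page of weight bookkeeping that the embedding route avoids. One small remark: your phrase about the ``bounded-$|k|$ parts of $D_{2}$'' is superfluous, since $D_{2}$ requires $|k|\geq 1$ and the Schwartz-decay argument applies there uniformly; only $D_{4}$ (where $\frac{1}{\lambda}\leq|k|\leq 1$) needs the trivial $\langle k\rangle\sim 1$ observation.
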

\begin{proof} From  Lemma 2.5, we have
that $X_{s,\frac{5}{6}-\epsilon}
\hookrightarrow Z^{s}\hookrightarrow C([0,T]:H^{s}(\mathbf{T})).$
\end{proof}

We have completed the proof of Lemma 2.6.
\begin{Lemma}\label{Lemma2.7}
Let $\tau=\tau_{1}+\tau_{2},k=k_{1}+k_{2},$
$
\sigma=\tau-k^{3},\sigma_{1}= \tau_{1}-k_{1}^{3},\sigma_{2}= \tau_{1}-k_{2}^{3}.
$
 Then, we have that
\begin{eqnarray*}
3{\rm max}\left\{|\sigma|,|\sigma_{1}|,|\sigma_{2}|\right\}
\geq|\sigma-\sigma_{1}-\sigma_{2}|=
\left|k^{3}-k_{1}^{3}-k_{2}^{3}\right|= 3|kk_{1}k_{2}|.
\end{eqnarray*}
Moreover, one of the following three cases  must occurs:
\begin{eqnarray*}
 &&(a): |\sigma|={\rm max}
\left\{|\sigma|,|\sigma_{1}|,|\sigma_{2}|\right\}\geq |kk_{1}k_{2}|,\nonumber\\
&& (b): |\sigma_{1}|={\rm max}
\left\{|\sigma|,|\sigma_{1}|,|\sigma_{2}|\right\}\geq |kk_{1}k_{2}|,\nonumber\\
&& (c): |\sigma_{2}|={\rm max}
\left\{|\sigma|,|\sigma_{1}|,|\sigma_{2}|\right\}\geq |kk_{1}k_{2}|.
\end{eqnarray*}
\end{Lemma}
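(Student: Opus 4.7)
The plan is to reduce the statement to the classical KdV resonance identity $(k_1+k_2)^3 - k_1^3 - k_2^3 = 3k_1k_2(k_1+k_2)$, and then invoke the triangle inequality. I would first unwind the definitions: since $P(k) = -k^3$, one has $\sigma = \tau - k^3$ and $\sigma_j = \tau_j - k_j^3$, and the convolution constraint $\tau = \tau_1 + \tau_2$ makes the $\tau$-terms cancel when forming $\sigma - \sigma_1 - \sigma_2$, leaving
\[
\sigma - \sigma_1 - \sigma_2 = -(k^3 - k_1^3 - k_2^3).
\]
Applying the cubic identity above with $k = k_1 + k_2$ yields $k^3 - k_1^3 - k_2^3 = 3kk_1k_2$, which simultaneously gives the two equalities claimed in the statement.

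For the inequality, I would simply write
\[
|\sigma - \sigma_1 - \sigma_2| \leq |\sigma| + |\sigma_1| + |\sigma_2| \leq 3\max\{|\sigma|,|\sigma_1|,|\sigma_2|\},
\]
and combine this with the identity just derived to obtain $3\max\{|\sigma|,|\sigma_1|,|\sigma_2|\} \geq 3|kk_1k_2|$. The trichotomy $(a),(b),(c)$ then follows by trivial case analysis on which of the three modulations attains the maximum, and dividing by the factor $3$.

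There is really no obstacle to overcome here; this is the standard ``resonance relation'' that underpins all Bourgain-space analyses of KdV-type equations, recorded in the preliminaries because it will be the workhorse in distinguishing the regions $D_1,\dots,D_5$ and driving the bilinear estimates of Section~3.
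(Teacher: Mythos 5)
Your argument is correct and complete: the cancellation of the $\tau$-variables under the convolution constraint, the cubic identity $(k_1+k_2)^3-k_1^3-k_2^3=3k_1k_2(k_1+k_2)$, and the triangle inequality give exactly the claimed resonance relation, with the trichotomy following trivially. The paper explicitly omits the proof as easy, and yours is the standard argument the authors clearly have in mind, so there is nothing further to compare.
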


Since the proof is easy and we omit it here.

\bigskip
\bigskip

\noindent{\large\bf 3. Bilinear estimates }

\setcounter{equation}{0}

 \setcounter{Theorem}{0}

\setcounter{Lemma}{0}

 \setcounter{section}{3}
 In this section, we establish Lemmas 3.1-3.5.
 \begin{Lemma}\label{Lemma3.1}
Let  $\frac{1}{6}+\epsilon\leq s\leq\frac{1}{2}-2\epsilon$,
where $0<\epsilon<\frac{1}{10^{9}}.$ Then, we have
\begin{eqnarray}
      \left\|\Lambda^{-1}\partial_{x}(1-\partial_{x}^{2})^{-1}\prod_{j=1}^{2}
      (\partial_{x}u_{j})\right\|_{X^{s}}\leq C\prod\limits_{j=1}^{2}\|u_{j}\|_{Z^{s}},
        \label{3.01}
\end{eqnarray}
here $C>0$,  which  is  independent of  $\lambda$,
$\left\|\cdot\right\|_{X^{s}}$ is the
norm removing $\left\|\cdot\right\|_{Y^{s}}$ from
$\left\|\cdot\right\|_{Z^{s}}.$
\end{Lemma}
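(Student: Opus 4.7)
The plan is to pass to the Fourier side and reduce the estimate to a weighted bilinear $L^{2}$ bound. After Plancherel, the left-hand side is, up to a sign, the relevant piece of the $X^{s}$-norm of
\begin{equation*}
\int\frac{kk_{1}k_{2}}{(1+k^{2})\langle\sigma\rangle}\,\mathscr{F}u_{1}(k_{1},\tau_{1})\,\mathscr{F}u_{2}(k-k_{1},\tau-\tau_{1})\,(dk_{1})_{\lambda}\,d\tau_{1},
\end{equation*}
so the bilinear symbol $|kk_{1}k_{2}|/[(1+k^{2})\langle\sigma\rangle]$ is what has to be controlled against the $Z^{s}$ norms of the inputs. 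The $X^{s}$ norm on the output splits, following the definition of $Z^{s}$ with its $Y^{s}$ component removed, into three separate weighted bilinear estimates with output weights $\langle k\rangle^{s}\langle\sigma\rangle^{5/6-\epsilon}$ on $D_{1}\cup D_{5}$, $\langle k\rangle^{1-s}\langle\sigma\rangle^{s+1/3-\epsilon}$ on $D_{2}$, and $\langle k\rangle^{1-s}\langle\sigma\rangle^{s}$ on $D_{3}\cup D_{4}$; the inputs may be measured in any one of the four pieces of $\|u_{j}\|_{Z^{s}}$.

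Each of these three estimates is attacked by a Littlewood--Paley decomposition in both frequency ($N=\langle k\rangle$, $N_{j}=\langle k_{j}\rangle$) and modulation ($L=\langle\sigma\rangle$, $L_{j}=\langle\sigma_{j}\rangle$), combined with Lemma 2.7, which furnishes $\max(L,L_{1},L_{2})\gtrsim |kk_{1}k_{2}|$. When $L$ is the largest, the factor $\langle\sigma\rangle^{-1}$ from $\Lambda^{-1}$ is used directly to absorb the derivative excess in the symbol. When some $L_{j}$ is largest, one trades part of the input modulation weight for smoothing on the multiplier, leaving a bilinear $L^{2}_{xt}$ product whose modulation exponents satisfy $a+b\geq 2/3$ and $\min(a,b)>1/6$, so that Lemma 2.3 applies. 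The summation in $N_{j}, L_{j}$ then closes by a geometric series argument after distributing the Sobolev weights, keeping track of the interaction geometries (high-high-high, and the three high-low/low-high variants).

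The main obstacle will be the high-high-to-moderate interaction, $N_{1}\sim N_{2}\gg N\gtrsim 1$, where the smoothing from $(1-\partial_{x}^{2})^{-1}$ is insufficient to absorb the two extra derivatives in the nonlinearity, forcing one to rely entirely on the resonance identity to recover smoothing. This is particularly delicate when the output lies in $D_{2}$, because there $\langle\sigma\rangle$ lies between $|k|^{2}$ and $|k|^{3}$, and the amplified Sobolev weight $\langle k\rangle^{1-s}$ has to be balanced against the modulation power $s+1/3-\epsilon$. The precise choice of the exponent pair $(1-s,\,s+1/3-\epsilon)$ on $D_{2}$ (see Remark 2) is calibrated exactly for this case, and the restriction $s>\frac{1}{6}+\epsilon$ enters through the constraint $\min(a,b)>\frac{1}{6}$ in Lemma 2.3 once the modulation weights have been reshuffled via Lemma 2.7. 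The low-frequency regimes $D_{4}, D_{5}$ are easier, since the factor $k/(1+k^{2})$ already vanishes linearly at the origin and supplies additional absolute smoothing there.
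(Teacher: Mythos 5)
Your plan coincides with the paper's own proof in all essentials: the same splitting of the output $X^{s}$ norm according to $D_{1}\cup D_{5}$, $D_{2}$, $D_{3}\cup D_{4}$, the same frequency-interaction case analysis (the paper's regions $\Omega_{1},\dots,\Omega_{8}$), the trichotomy of Lemma 2.7 on which modulation dominates, and Lemma 2.3 plus Young/Cauchy--Schwarz to close each case, with the high$\times$high$\to$low interaction correctly identified as the source of the restriction $s\geq\frac{1}{6}+\epsilon$. The only minor discrepancy is one of attribution: in the paper the threshold $s\geq\frac{1}{6}+\epsilon$ emerges from balancing the Sobolev weights (needing $-s+\frac{1}{3}+2\epsilon\leq s$ in case (b) of $\Omega_{2}$) rather than literally from the constraint $\min(a,b)>\frac{1}{6}$ in Lemma 2.3, though the two are numerologically linked through the choice of the exponent $\frac{5}{6}-\epsilon$ in $Z^{s}$.
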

{\bf Proof.} Obviously, $\left(\R\times\dot{Z}_{\lambda}\right)^{2}
\subset \bigcup\limits_{j=1}^{8}\Omega_{j},$
where
\begin{eqnarray*}
&&\Omega_{1}=\left\{(\tau_{1},k_{1},\tau,k)\in \left(\R\times\dot{Z_{\lambda}}\right)^{2}:
{\rm max}\left\{|k_{1}|, |k|\right\}\leq1\right\},\\
&&\Omega_{2}=\left\{(\tau_{1},k_{1},\tau,k)\in \left(\R\times\dot{Z_{\lambda}}\right)^{2}\cap
\Omega_{1}^{c}:|k_{1}|\sim |k_{2}|\gg |k|\geq1\right\},\\
&&\Omega_{3}=\left\{(\tau_{1},k_{1},\tau,k)\in \left(\R\times\dot{Z_{\lambda}}\right)^{2}\cap
\Omega_{1}^{c}:|k_{1}|\sim |k_{2}|\gg |k|,1\geq|k|\geq\frac{1}{\lambda}\right\},\\
&&\Omega_{4}=\left\{(\tau_{1},k_{1},\tau,k)\in \left(\R\times\dot{Z_{\lambda}}\right)^{2}\cap
\Omega_{1}^{c}:|k|\sim |k_{2}|\gg |k_{1}|\geq 1\right\},\\
&&\Omega_{5}=\left\{(\tau_{1},k_{1},\tau,k)\in \left(\R\times\dot{Z_{\lambda}}\right)^{2}\cap
\Omega_{1}^{c}:|k|\sim |k_{2}|\gg |k_{1}|,1\geq|k_{1}|\geq \frac{1}{\lambda}\right\},\\
&&\Omega_{6}=\left\{(\tau_{1},k_{1},\tau,k)\in \left(\R\times\dot{Z_{\lambda}}\right)^{2}\cap
\Omega_{1}^{c}:|k|\sim |k_{1}|\gg |k_{2}|\geq 1\right\},\\
&&\Omega_{7}=\left\{(\tau_{1},k_{1},\tau,k)\in \left(\R\times\dot{Z_{\lambda}}\right)^{2}\cap
\Omega_{1}^{c}:|k|\sim |k_{1}|\gg |k_{2}|,1\geq|k_{2}|\geq \frac{1}{\lambda}\right\},\\
&&\Omega_{8}=\left\{(\tau_{1},k_{1},\tau,k)\in \left(\R\times\dot{Z_{\lambda}}\right)^{2}\cap
\Omega_{1}^{c}:|k|\sim |k_{1}|\sim |k_{2}|\geq 1\right\}.
\end{eqnarray*}
(1) In region $\Omega_{1}$.
By using (\ref{2.06}), since ${\rm max}\left\{|k_{1}|,|k|\right\}\leq 1,$ by
using the Cauchy-Schwartz inequality
and  the Young inequality as well as Lemma 2.5,   we have that
\begin{eqnarray*}
&&\left\|\Lambda^{-1}\partial_{x}
(1-\partial_{x}^{2})^{-1}\prod_{j=1}^{2}
(\partial_{x}u_{j})\right\|_{X^{s}}\leq
C\left\|\Lambda^{-1}\partial_{x}(1-\partial_{x}^{2})^{-1}
\prod_{j=1}^{2}(\partial_{x}u_{j})\right\|_{X_{s,\frac{5}{6}-\epsilon}}\nonumber\\
&&\leq C\left\||k|\langle \sigma\rangle^{-\frac{1}{6}-\epsilon}
\left(\mathscr{F}u_{1}*\mathscr{F}u_{2}\right)
\right\|_{l_{k}^{2}L_{\tau}^{2}}\nonumber\\
&&\leq C\||k|\|_{l_{k}^{2}}\left\|\mathscr{F}u_{1}*\mathscr{F}u_{2}
\right\|_{l_{k}^{\infty}L_{\tau}^{2}}\leq C\|\mathscr{F}u_{1}\|_{
l_{k}^{2}L_{\tau}^{2}}\|\mathscr{F}u_{2}\|_{l_{k}^{2}L_{\tau}^{1}}\leq
C\|u_{1}\|_{X_{s,\frac{1}{6}+\epsilon}}\|u_{2}\|_{Y^{s}}\nonumber
\\&&\leq C\prod_{j=1}^{2}\|u_{j}\|_{Z^{s}}.
\end{eqnarray*}
(2) In region $\Omega_{2}$. In this  region, we consider (a)-(c) of Lemma 2.7, respectively.

\noindent(a) Case $|\sigma|={\rm max}\left\{|\sigma|,|\sigma_{1}|,|\sigma_{2}|\right\}.$
In this case, we have that
$\supp \left[\mathscr{F}u_{1}*\mathscr{F}u_{2}\right]\!\subset\! D_{3}.$

\noindent When $\supp \mathscr{F}u_{j}$ $ \subset D_{1}\cup D_{2}$ with $j=1,2$,
 by using
   Lemmas 2.5, 2.7,  2.3, since $\frac{1}{6}+\epsilon\leq s\leq\frac{1}{2}-2\epsilon$,
 we have that
\begin{eqnarray*}
&&\left\|\Lambda^{-1}\partial_{x}(1-\partial_{x}^{2})^{-1}\prod_{j=1}^{2}
(\partial_{x}u_{j})\right\|_{X^{s}}\leq C\left\|\langle k\rangle^{-s}
\langle\sigma\rangle^{s-1}\left[(\langle k\rangle\mathscr{F}u_{1})*
(\langle k\rangle\mathscr{F}u_{2})\right]\right\|_{l_{k}^{2}L_{\tau}^{2}}\nonumber\\
&&\leq C\|(J^{s}u_{1})(J^{s}u_{2})\|_{L_{xt}^{2}}\leq C
\|u_{1}\|_{X_{s,\frac{1}{2}}}\|u_{2}\|_{X_{s,\frac{1}{6}+\epsilon}}
\leq C\prod_{j=1}^{2}\|u_{j}\|_{Z^{s}}.
\end{eqnarray*}
When $\supp \mathscr{F}u_{1} \subset D_{3}$,
 by using   Lemma  2.7, the Young inequality
   and the  Cauchy-Schwartz inequality,
   since $\frac{1}{6}+\epsilon\leq s\leq\frac{1}{2}-2\epsilon$, we have that
\begin{eqnarray*}
&&\left\|\Lambda^{-1}\partial_{x}(1-\partial_{x}^{2})^{-1}
\prod_{j=1}^{2}(\partial_{x}u_{j})\right\|_{X^{s}}
\leq C\left\|\langle k\rangle^{-s}\langle\sigma\rangle^{s-1}
\left[\langle k\rangle\mathscr{F}u_{1}*\langle k\rangle
\mathscr{F}u_{2}\right]\right\|_{l_{k}^{2}L_{\tau}^{2}}\nonumber\\
&&\leq C\left\|\left[\langle k\rangle^{2s}\mathscr{F}u_{1}\right]*
\mathscr{F}u_{2}\right\|_{l_{k}^{2}L_{\tau}^{2}}\nonumber\\
&&\leq C\left\|\left[\langle k\rangle^{1-s}
\langle\sigma\rangle^{s}\mathscr{F}u_{1}\right]*
\left[\langle k\rangle^{-1}\mathscr{F}u_{2}\right]
\right\|_{l_{k}^{2}L_{\tau}^{2}}
\nonumber\\
&&\leq C\|u_{1}\|_{X_{1-s,s}}\|\langle k\rangle^{-1}
\mathscr{F}u_{2}\|_{l_{k}^{1}L_{\tau}^{1}}
\leq C\|u\|_{X_{1-s,s}}\|\langle k\rangle^{s}
\mathscr{F}u_{2}\|_{l_{k}^{2}L_{\tau}^{1}}\leq
C\prod_{j=1}^{2}\|u_{j}\|_{Z^{s}}.
\end{eqnarray*}
When $\supp \mathscr{F}u_{2} \subset D_{3}$,
this case can be proved similarly to
$\supp \mathscr{F}u_{1} \subset D_{3}$.

\noindent (b) Case $|\sigma_{1}|={\rm max}\left\{|\sigma|,|\sigma_{1}|,|\sigma_{2}|\right\},$
 we consider the following cases:
\begin{eqnarray*}
(i): |\sigma_{1}|>4{\rm max}\left\{|\sigma|,|\sigma_{2}|\right\},
(ii):|\sigma_{1}|\leq4{\rm max}\left\{|\sigma|,|\sigma_{2}|\right\},
\end{eqnarray*}
respectively.

\noindent
When (i) occurs:
if $\supp  \mathscr{F}u_{1}\subset D_{1}$ which yields that $1\leq|k|\leq C$,
by using  Lemmas 2.5, 2.7, 2.3, since $\frac{1}{6}+\epsilon\leq s\leq\frac{1}{2}-2\epsilon,$ we have that
\begin{eqnarray*}
&&\left\|\Lambda^{-1}\partial_{x}(1-\partial_{x}^{2})^{-1}
\prod_{j=1}^{2}(\partial_{x}u_{j})\right\|_{X^{s}}
\leq C\left\|\langle k\rangle ^{s-1}\langle \sigma \rangle
^{-\frac{1}{6}-\epsilon}
(\langle k\rangle\mathscr{F}u_{1})*(\langle k\rangle
\mathscr{F}u_{2})\right\|_{l_{k}^{2}L_{\tau}^{2}}\nonumber\\&&
\leq C\left\|\langle \sigma \rangle ^{-\frac{1}{6}-\epsilon}
\left[(\langle k\rangle^{s}\langle\sigma\rangle^{\frac{5}{6}-\epsilon}
\mathscr{F}u_{1})*(\langle k\rangle ^{2\epsilon+\frac{1}{3}-s}
\mathscr{F}u_{2})\right]\right\|_{l_{k}^{2}L_{\tau}^{2}}\nonumber\\
&&\leq C\left\|\left(J^{s}\Lambda ^{\frac{5}{6}-\epsilon}u_{1}\right)
\left(J^{-s+\frac{1}{3}+2\epsilon}u_{2}\right)\right\|_{X_{0,-\frac{1}{6}-\epsilon}}\nonumber\\
&&\leq C\|u_{1}\|_{X_{s,\frac{5}{6}-\epsilon}}\|u_{2}\|_{X_{s,\frac{1}{2}}}\leq
C\|u_{1}\|_{X_{s,\frac{5}{6}-\epsilon}}\|u_{2}\|_{X_{s,\frac{1}{2}}}\leq
C\prod_{j=1}^{2}\|u_{j}\|_{Z^{s}};
\end{eqnarray*}
if $\supp \mathscr{F} u_{1}\subset D_{2},$
by using Lemmas 2.5, 2.7, 2.3, since $\frac{1}{6}+\epsilon\leq s\leq\frac{1}{2}-2\epsilon$,
we have that
\begin{eqnarray*}
&&\left\|\Lambda^{-1}\partial_{x}(1-\partial_{x}^{2})^{-1}
\prod_{j=1}^{2}(\partial_{x}u_{j})\right\|_{X^{s}}
\leq C\left\|\langle k\rangle ^{s-1}\langle \sigma \rangle ^{-\frac{1}{6}-\epsilon}
\left[(\langle k\rangle\mathscr{F}u_{1})*(\langle k\rangle\mathscr{F}u_{2})\right]
\right\|_{l_{k}^{2}L_{\tau}^{2}}\nonumber\\&&
\leq C\left\|\left(J^{1-s}\Lambda ^{s+\frac{1}{3}-\epsilon}u_{1}\right)
\left(J^{-s+\frac{1}{3}+2\epsilon}u_{2}\right)
\right\|_{X_{0,-\frac{1}{6}-\epsilon}}\nonumber\\
&&\leq C\|u_{1}\|_{X_{1-s,s+\frac{1}{3}-\epsilon}}
\|u_{2}\|_{X_{-s+\frac{1}{3}+2\epsilon,\frac{1}{2}}}\leq
 C\|u_{1}\|_{X_{1-s,s+\frac{1}{3}-\epsilon}}\|u_{2}\|_{X_{s,\frac{1}{2}}}\leq
C\prod_{j=1}^{2}\|u_{j}\|_{Z^{s}}.
\end{eqnarray*}
When (ii) occurs: we have $|\sigma_{1}|\sim |\sigma|$ or $|\sigma_{1}|\sim |\sigma_{2}|$.

\noindent
Case $|\sigma_{1}|\sim |\sigma|$   can be proved similarly to
$|\sigma|={\rm max}\left\{|\sigma|,|\sigma_{1}|,|\sigma_{2}|\right\}.$

\noindent
When $|\sigma_{1}|\sim |\sigma_{2}|$, we consider $\supp \mathscr{F}u_{1}\subset D_{1}$,
$\supp\mathscr{F} u_{1}\subset D_{2}$, $\supp\mathscr{F} u_{1}\subset D_{3}$, respectively.

\noindent When $\supp\mathscr{F} u_{1}\subset D_{1}$
which yields that $1\leq|k|\leq C$,
by using  Lemmas 2.5, 2.7, 2.3,
since $\frac{1}{6}+\epsilon\leq s\leq\frac{1}{2}-2\epsilon,$
 we have that
\begin{eqnarray*}
&&\left\|\Lambda^{-1}\partial_{x}(1-\partial_{x}^{2})^{-1}\prod_{j=1}^{2}
(\partial_{x}u_{j})\right\|_{X^{s}}\leq C
\left\|\langle k\rangle ^{s-1}\langle \sigma \rangle ^{-\frac{1}{6}-\epsilon}
(\langle k\rangle\mathscr{F}u_{1})*(\langle k\rangle\mathscr{F}u_{2})
\right\|_{l_{k}^{2}L_{\tau}^{2}}\nonumber\\&&
\leq C\left\|(\langle k\rangle^{s}\langle \sigma \rangle ^{\frac{5}{6}-\epsilon}
\mathscr{F}u_{1})*(\langle k\rangle ^{-s+\frac{1}{3}+2\epsilon}\mathscr{F}u_{2})\right\|_{X_{0,-\frac{1}{6}-\epsilon}}\nonumber\\
&&\leq C\left\|\left(J^{s}\Lambda ^{\frac{5}{6}-\epsilon}u_{1}\right)
\left(J^{-s+\frac{1}{3}+2\epsilon}u_{2}\right)\right\|_{
X_{0,-\frac{1}{6}-\epsilon}}\nonumber\\
&&\leq C\|u_{1}\|_{X_{s,\frac{5}{6}-\epsilon}}\|u_{2}\|_{
X_{-s+\frac{1}{3}+2\epsilon,\frac{1}{2}}}\leq C\|u_{1}\|_{
X_{s,\frac{5}{6}-\epsilon}}\|u_{2}\|_{X_{s,\frac{1}{2}}}\leq
C\prod_{j=1}^{2}\|u_{j}\|_{Z^{s}}.
\end{eqnarray*}
When $\supp \mathscr{F}u_{1}\subset D_{2}$,
by using
the  Lemmas 2.5, 2.7, 2.3, since $\frac{1}{6}+\epsilon\leq s\leq\frac{1}{2}-2\epsilon$,
 we have that
\begin{eqnarray*}
&&\left\|\Lambda^{-1}\partial_{x}(1-\partial_{x}^{2})^{-1}\prod_{j=1}^{2}
(\partial_{x}u_{j})\right\|_{X^{s}}\leq C\left\|\langle k\rangle ^{s-1}
\langle \sigma \rangle ^{-\frac{1}{6}-\epsilon}
\left[(\langle k\rangle\mathscr{F}u_{1})*(\langle k\rangle\mathscr{F}u_{2})\right]
\right\|_{l_{k}^{2}L_{\tau}^{2}}\nonumber\\&&
\leq C\left\|\left(J^{1-s}\Lambda ^{s+\frac{1}{3}-\epsilon}u_{1}\right)
\left(J^{-s+\frac{1}{3}+2\epsilon}u_{2}\right)
\right\|_{X_{0,-\frac{1}{6}-\epsilon}}\nonumber\\
&&\leq C\|u_{1}\|_{X_{1-s,s+\frac{1}{3}-\epsilon}}\|u_{2}\|_{X_{-s+\frac{1}{3}
+2\epsilon,\frac{1}{2}}}\nonumber\\
&&\leq C\|u_{1}\|_{X_{1-s,s+\frac{1}{3}-\epsilon}}\|u_{2}\|_{X_{s,\frac{1}{2}}}\leq
C\prod_{j=1}^{2}\|u_{j}\|_{Z^{s}}.
\end{eqnarray*}
Case $\supp\mathscr{F} u_{1}\subset D_{3}$,
by using  Lemma 2.5, the H\"older
inequality  and the  Young inequality as well as Lemmas 2.7, 2.3, since
$ \frac{1}{6}+\epsilon\leq s\leq\frac{1}{2}-2\epsilon,$
 we  have that
\begin{eqnarray*}
&&\left\|\Lambda^{-1}\partial_{x}(1-\partial_{x}^{2})^{-1}\prod_{j=1}^{2}
(\partial_{x}u_{j})\right\|_{X^{s}}\leq C
\left\|\langle k\rangle ^{s-1}\langle \sigma\rangle ^{-\frac{1}{6}-\epsilon}
\left[(|k|\mathscr{F}u_{1})*(|k|\mathscr{F}u_{2})\right]\right\|_{l_{k}^{2}L_{\tau}^{2}}\nonumber\\
&&\leq C\left\|\langle k\rangle ^{s-\frac{1}{2}+\epsilon}\langle
 \sigma \rangle^{\frac{1}{3}}
\left[(|k|\mathscr{F}u_{1})*(|k|\mathscr{F}u_{2})\right]
\right\|_{l_{k}^{\infty}L_{\tau}^{\infty}}\nonumber\\
&&\leq C\left\|\langle k\rangle ^{s-\frac{1}{2}+\epsilon}\left(\langle k\rangle^{2}\mathscr{F}u_{1})
*(\langle k\rangle\mathscr{F}u_{2}\right)\right\|_{l_{k}^{\infty}l_{\tau}^{\infty}}\nonumber\\&&
\leq C\left\|\left(\langle k\rangle ^{s+\frac{5}{2}+\epsilon}\mathscr{F}u_{1}
\right)*\mathscr{F}u_{2}\right\|_{l_{k}^{\infty}l_{\tau}^{\infty}}\nonumber\\&&
\leq C
\left\|\langle k\rangle ^{-3s+\frac{1}{2}+\epsilon}\right\|_{l_{k}^{\infty}}
\prod_{j=1}^{2}\|u_{j}\|_{X_{1-s,s}}\leq C
\prod_{j=1}^{2}\|u_{j}\|_{X_{1-s,s}}\leq C\prod_{j=1}^{2}\|u_{j}\|_{Z^{s}}.
\end{eqnarray*}
(c) Case $|\sigma_{2}|={\rm max }\left\{|\sigma|,|\sigma_{1}|,|\sigma_{2}|\right\}.$
This case can be proved similarly to case (b).

\noindent (3) Region $\Omega_{3}$.
We  consider $|k|\leq |k_{1}|^{-2}$  and  $|k_{1}|^{-2}<|k|\leq 1,$ respectively.

\noindent When $|k|\leq |k_{1}|^{-2}$, by using  the H\"older
inequality and the Young inequality as well as  Lemma 2.5,
since $\frac{1}{6}+\epsilon \leq s\leq\frac{1}{2}-2\epsilon,$ we have that
\begin{eqnarray*}
&&\left\|\Lambda^{-1}\partial_{x}(1-\partial_{x}^{2})^{-1}\prod_{j=1}^{2}
(\partial_{x}u_{j})\right\|_{X^{s}}\leq C\left\|\langle k\rangle^{s-2}|k|\langle \sigma\rangle ^{-\frac{1}{6}-\epsilon}
\left[(|k|\mathscr{F}u_{1})*(|k|\mathscr{F}u_{2})\right]\right\|_{l_{k}^{2}L_{\tau}^{2}}\nonumber\\
&&\leq C\left\|\left[\mathscr{F}u_{1}*\mathscr{F}u_{2}\right]
\right\|_{l_{k}^{\infty}L_{\tau}^{2}}
\leq C\|u_{1}\|_{X_{0,0}}\|u_{2}\|_{Y^{0}}\nonumber\\
&&\leq C\|u_{1}\|_{X_{s,\frac{1}{6}}}\|u_{2}\|_{Y^{s}}
\leq C\prod_{j=1}^{2}\|u_{j}\|_{Z^{s}}.
\end{eqnarray*}
Now we consider the case $|k_{1}|^{-2}<|k|\leq 1.$
In this  case, we consider (a)-(c) of Lemma 2.7, respectively.

\noindent When   (a) occurs:
$\supp \left[\mathscr{F}u_{1}*\mathscr{F}u_{2}\right]\subset D_{4}$,
by using  Lemma 2.7  and the Young inequality,   we have that
\begin{eqnarray*}
&&\left\|\Lambda^{-1}\partial_{x}(1-\partial_{x}^{2})^{-1}
\prod_{j=1}^{2}(\partial_{x}u_{j})\right\|_{X^{s}}\leq C
\left\||k|\langle k\rangle ^{-s-1}\langle \sigma \rangle ^{s-1}\left[(|k|\mathscr{F}u_{1})*(|k|\mathscr{F}u_{2})\right]
\right\|_{l_{k}^{2}L_{\tau}^{2}}\nonumber\\
&&\leq C\left\| \left[(|k|^{s}\mathscr{F}u_{1})*(|k|^{s}\mathscr{F}u_{2})\right]
\right\|_{l_{k}^{2}L_{\tau}^{2}}\nonumber\\
&&\leq C\left\|\left[(|k|^{s}\mathscr{F}u_{1})*(|k|^{s}\mathscr{F}u_{2})\right]
\right\|_{l_{k}^{\infty}L_{\tau}^{2}}\nonumber\\
&&\leq C\|u_{1}\|_{X_{s,0}}\|u_{2}\|_{Y^{s}}\leq C\|u_{1}\|_{X_{s,\frac{1}{6}}}\|u_{2}\|_{Y^{s}}\leq C\prod_{j=1}^{2}\|u_{j}\|_{Z^{s}}.
\end{eqnarray*}
When (b)  occurs:  we   consider  $|\sigma_{1}|>4{\rm max}\left\{|\sigma|,|\sigma_{2}|\right\}$  and
$|\sigma_{1}|\leq4{\rm max}\left\{|\sigma|,|\sigma_{2}|\right\}$, respectively.

\noindent
When $|\sigma_{1}|>4{\rm max}\left\{|\sigma|,|\sigma_{2}|\right\}$,
 we have that $\supp \mathscr{F}u_{1} \subset D_{1}$,
 by using the Lemma 2.5, H\"older  inequality and the Young inequality,  since
$|k|\leq 1$ and $\frac{1}{6}+\epsilon \leq s\leq\frac{1}{2}-2\epsilon$,  we have that
\begin{eqnarray*}
&&   \quad
        \left\|\Lambda^{-1}\partial_{x}(1-\partial_{x}^{2})^{-1}
        \prod_{j=1}^{2}(\partial_{x}u_{j})\right\|_{X^{s}}\\
&&\leq  C\left\||k|\langle k\rangle ^{s-2}\langle \sigma \rangle ^{s-\frac{1}{6}-\epsilon}
        \left[(|k|\mathscr{F}u_{1})*(|k|\mathscr{F}u_{2})\right]\right\|_{l_{k}^{2}L_{\tau}^{2}}
        \nonumber\\
&&\leq C\left\||k|^{\frac{1}{6}+\epsilon} \left[(\langle k\rangle^{s}
       \langle\sigma\rangle^{\frac{5}{6}-\epsilon}\mathscr{F}u_{1})*
      (\langle k\rangle ^{-s+\frac{1}{3}+2\epsilon}\mathscr{F}u_{2})\right]\right\|_{l_{k}^{2}L_{\tau}^{2}}
     \nonumber\\
&&\leq C\left\| \left[(\langle k\rangle^{s}\langle\sigma\rangle^{\frac{5}{6}-\epsilon}
      \mathscr{F}u_{1})*(\langle k\rangle
   ^{-s+\frac{1}{3}+2\epsilon}\mathscr{F}u_{2})\right]\right\|_{l_{k}^{\infty}L_{\tau}^{2}}\\
&&\leq C\|u_{1}\|_{X_{s,\frac{5}{6}-\epsilon}}\|u_{2}\|_{Y^{s}}\leq C\prod_{j=1}^{2}\|u_{j}\|_{Z^{s}}.
\end{eqnarray*}
When $|\sigma_{1}|\leq4{\rm max}\left\{|\sigma|,|\sigma_{2}|\right\}$, we have
 $|\sigma_{1}|\sim |\sigma|$ or  $|\sigma_{1}|\sim |\sigma_{2}|$.

 \noindent
 Case $|\sigma_{1}|\sim|\sigma|$ can be proved similarly to
 case $|\sigma|={\rm max}\left\{|\sigma|,|\sigma_{1}|,|\sigma_{2}|\right\}.$

 \noindent
  Case $|\sigma_{1}|\sim|\sigma_{2}|$, we consider $\supp \mathscr{F}u_{1}\subset D_{1}$ and
  $\supp \mathscr{F}u_{1}\subset D_{2}$ and $\supp \mathscr{F}u_{1}\subset D_{3},$
  respectively.

\noindent   When $\supp \mathscr{F}u_{1}\subset D_{1},$
 by using Lemmas 2.5, 2.7,   2.3, since $\frac{1}{6}+\epsilon\leq s\leq\frac{1}{2}-2\epsilon,$  we have that
\begin{eqnarray*}
&&\left\|\Lambda^{-1}\partial_{x}(1-\partial_{x}^{2})^{-1}
\prod_{j=1}^{2}(\partial_{x}u_{j})\right\|_{X^{s}}
\leq C\left\||k|\langle k\rangle^{s-2}\langle \sigma \rangle^{-\frac{1}{6}-\epsilon}
\left[(|k|\mathscr{F}u_{1})*(|k|\mathscr{F}u_{2})\right]
\right\|_{l_{k}^{2}L_{\tau}^{2}}\nonumber\\
&&\leq C\left\|\left(J^{s}\Lambda^{\frac{5}{6}-\epsilon}u_{1}\right)
\left(J^{-s+\frac{1}{3}+2\epsilon}u_{2}\right)\right\|_{X_{0,-\frac{1}{6}-\epsilon}}\nonumber\\
&&\leq C\|u_{1}\|_{X_{s,\frac{5}{6}-\epsilon}}\|u_{2}\|_{X_{-s+\frac{1}{3}+2\epsilon,\frac{1}{2}}}\leq C\|u_{1}\|_{X_{s,\frac{5}{6}-\epsilon}}\|u_{2}\|_{X_{s,\frac{1}{2}}}\leq C\prod_{j=1}^{2}\|u_{j}\|_{Z^{s}}.
\end{eqnarray*}
Case $\supp \mathscr{F}u_{1}\subset D_{2}$ and $\supp\left[\mathscr{F}u_{1}*\mathscr{F}u_{2}\right]\subset D_{4}$,
 by using the H\"older inequality, since
 $\frac{1}{6}+\epsilon\leq s\leq\frac{1}{2}-2\epsilon,$ we have
\begin{eqnarray*}
&&\left\|\Lambda^{-1}\partial_{x}(1-\partial_{x}^{2})^{-1}\prod_{j=1}^{2}(\partial_{x}u_{j})
\right\|_{X^{s}}\leq C\left\||k|\langle k\rangle^{-s-1}\langle \sigma \rangle^{s-1}\left[(|k|\mathscr{F}u_{1})*(|k|\mathscr{F}u_{2})\right]
\right\|_{l_{k}^{2}L_{\tau}^{2}}\nonumber\\
&&\leq C\left\||k|\langle k\rangle^{-s-\frac{1}{2}+\varepsilon}\left[(|k|\mathscr{F}u_{1})*
(|k|\mathscr{F}u_{2})\right]\right\|_{l_{k}^{\infty}L_{\tau}^{\infty}}\nonumber\\
&&\leq C\|\langle k\rangle^{-2s-\frac{4}{3}+4\epsilon}\|_{l_{k}^{\infty}}
\prod_{j=1}^{2}\|u_{j}\|_{X_{1-s,s+\frac{1}{3}-\epsilon}}\nonumber\\
&&\leq C\prod_{j=1}^{2}\|u_{j}\|_{X_{1-s,s+\frac{1}{3}-\epsilon}}
\leq C\prod_{j=1}^{2}\|u_{j}\|_{Z^{s}};
\end{eqnarray*}
case $\supp \mathscr{F}u_{1}\subset D_{2}$
 and $\supp\left[\mathscr{F}u_{1}*\mathscr{F}u_{2}\right]\subset D_{5}$,
by using the H\"older inequality,
 since $ \frac{1}{6}+\epsilon\leq s\leq\frac{1}{2}-2\epsilon,$ we have that
\begin{eqnarray*}
&&\left\|\Lambda^{-1}(1-\partial_{x}^{2})^{-1}\partial_{x}\prod_{j=1}^{2}(\partial_{x}u_{j})
\right\|_{X^{s}}\leq C\left\||k|\langle k\rangle^{s-2}\langle \sigma \rangle^{-\frac{1}{6}-\epsilon}\left[(|k|\mathscr{F}u_{1})*(|k|
\mathscr{F}u_{2})\right]\right\|_{l_{k}^{2}L_{\tau}^{2}}\nonumber\\
&&\leq C\left\||k|\langle k\rangle^{s-\frac{3}{2}+\epsilon}\langle \sigma \rangle^{\frac{1}{3}}\left[(|k|\mathscr{F}u_{1})*
(|k|\mathscr{F}u_{2})\right]\right\|_{l_{k}^{\infty}L_{\tau}^{\infty}}\nonumber\\
&&\leq C\left\||k|^{\frac{2}{3}-s+\epsilon}\left[(\langle k\rangle^{1-s}\Lambda^{s+\frac{1}{3}-\epsilon}\mathscr{F}u_{1})*(\langle k\rangle^{-s+\frac{1}{3}+2\epsilon}
\mathscr{F}u_{2})\right]\right\|_{l_{k}^{\infty}L_{\tau}^{\infty}}\nonumber\\
&&\leq C\left\|\left[(\langle k\rangle^{1-s}\Lambda^{s+\frac{1}{3}-\epsilon}\mathscr{F}u_{1})*(\langle k\rangle^{-s+\frac{1}{3}+2\epsilon}
\mathscr{F}u_{2})\right]\right\|_{l_{k}^{\infty}L_{\tau}^{\infty}}\nonumber\\
&& \leq  C\|u_{1}\|_{X_{1-s,s+\frac{1}{3}-\epsilon}}\|u_{2}\|_{X_{s,0}}\nonumber\\
&&\leq C\prod_{j=1}^{2}\|u_{j}\|_{X_{1-s,s+\frac{1}{3}-\epsilon}}\leq
C\prod_{j=1}^{2}\|u_{j}\|_{Z^{s}}.
\end{eqnarray*}
Case $\supp \mathscr{F}u_{1}\subset D_{3}$
and $\supp\left[\mathscr{F}u_{1}*\mathscr{F}u_{2}\right]\subset D_{4}$,
by using the  H\"older inequality and the  Young  inequality,
 since $\frac{1}{6}+\epsilon\leq s\leq\frac{1}{2}-2\epsilon,$  we have that
\begin{eqnarray*}
&&\left\|\Lambda^{-1}(1-\partial_{x}^{2})^{-1}\partial_{x}
\prod_{j=1}^{2}(\partial_{x}u_{j})\right\|_{X^{s}}
\leq C\left\||k|\langle k\rangle^{-1-s}\langle \sigma \rangle^{s-1}\left[(|k|\mathscr{F}u_{1})*(|k|\mathscr{F}u_{2})\right]
\right\|_{l_{k}^{2}L_{\tau}^{2}}\nonumber\\
&&\leq C\left\||k|\langle k\rangle^{-s-\frac{1}{2}+\epsilon}\langle \sigma \rangle^{s-\frac{1}{2}+\epsilon}\left[(|k|\mathscr{F}u_{1})*(|k|\mathscr{F}u_{2})\right]
\right\|_{l_{k}^{\infty}L_{\tau}^{\infty}}\nonumber\\
&&\leq C\left\||k|\langle k\rangle^{-2s-2+4\epsilon}\left[(|k|\mathscr{F}u_{1})*(|k|\mathscr{F}u_{2})\right]
\right\|_{l_{k}^{\infty}L_{\tau}^{\infty}}\nonumber\\
&&\leq C\left\|\langle k\rangle ^{-4s}\left[(\langle k\rangle^{1-s}\Lambda ^{s}\mathscr{F}u_{1})*(\langle k\rangle^{1-s}\Lambda ^{s}\mathscr{F}u_{2})\right]
\right\|_{l_{k}^{\infty}L_{\tau}^{\infty}}\nonumber\\
&&\leq C\|\langle k\rangle^{-4s}\|_{l_{k}^{\infty}}\prod_{j=1}^{2}\|u_{j}\|_{X_{1-s,s}}\leq C\prod_{j=1}^{2}\|u_{j}\|_{X_{1-s,s}}\leq C\prod_{j=1}^{2}\|u_{j}\|_{Z^{s}};
\end{eqnarray*}
case $\supp \mathscr{F}u_{1}\subset D_{3}$
and $\supp\left[\mathscr{F}u_{1}*\mathscr{F}u_{2}\right]\subset D_{5}$,
 by  the  H\"older inequality and the  Young
inequality, since $ \frac{1}{6}+\epsilon\leq s\leq\frac{1}{2}-2\epsilon,$  we have that
\begin{eqnarray*}
&&\left\|\Lambda^{-1}(1-\partial_{x}^{2})^{-1}\partial_{x}\prod_{j=1}^{2}
(\partial_{x}u_{j})\right\|_{X^{s}}
\leq C\left\||k|\langle k\rangle^{s-2}\langle \sigma \rangle^{-\frac{1}{6}-\epsilon}\left[(|k|\mathscr{F}u_{1})*(|k|\mathscr{F}u_{2})\right]
\right\|_{l_{k}^{2}L_{\tau}^{2}}\nonumber\\
&&\leq C\left\||k|\langle k\rangle^{s-\frac{3}{2}+\epsilon}\langle \sigma \rangle^{\frac{1}{3}}\left[(|k|\mathscr{F}u_{1})*(|k|\mathscr{F}u_{2})\right]
\right\|_{l_{k}^{\infty}L_{\tau}^{\infty}}\nonumber\\
&&\leq C\left\||k|\langle k\rangle^{s-\frac{1}{2}+\epsilon}\left[(|k|\mathscr{F}u_{1})*(|k|\mathscr{F}u_{2})\right]
\right\|_{l_{k}^{\infty}L_{\tau}^{\infty}}\nonumber\\
&&\leq C\left\|\langle k\rangle ^{-4s}\left[(\langle k\rangle^{1-s}\Lambda ^{s}\mathscr{F}u_{1})*(\langle k\rangle^{1-s}\Lambda ^{s}\mathscr{F}u_{2})\right]
\right\|_{l_{k}^{\infty}L_{\tau}^{\infty}}\nonumber\\
&&\leq C\|\langle k\rangle^{-4s}\|_{l_{k}^{\infty}}
\prod_{j=1}^{2}\|u_{j}\|_{X_{1-s,s}}\leq C\prod_{j=1}^{2}\|u_{j}\|_{X_{1-s,s}}\leq C\prod_{j=1}^{2}\|u_{j}\|_{Z^{s}}.
\end{eqnarray*}
Case (c) can be proved similarly to case (b).

\noindent(4) Region $\Omega_{4}$.
In this  case, we consider (a)-(c) of Lemma 2.7, respectively.

\noindent
When (a) occurs: if $|\sigma|>4{\rm max}\left\{|\sigma_{1}|,|\sigma_{2}|\right\}$, we have
$\supp \left[\mathscr{F}u_{1}*\mathscr{F}u_{2}\right]\subset D_{2}.$
In this case, by using    Lemmas 2.3, 2.7,
since  $\frac{1}{6}+\epsilon\leq s\leq\frac{1}{2}-2\epsilon,$    we have that
\begin{eqnarray*}
&&\left\|\Lambda^{-1}\partial_{x}(1-\partial_{x}^{2})^{-1}
\prod_{j=1}^{2}(\partial_{x}u_{j})\right\|_{X^{s}}\leq
C\left\|\langle k\rangle^{-s}\langle \sigma \rangle^{s-\frac{2}{3}-\epsilon}\left[(|k|\mathscr{F}u_{1})*(|k|\mathscr{F}u_{2})\right]
\right\|_{l_{k}^{2}L_{\tau}^{2}}\nonumber\\
&&\leq C\left\|(J^{s}u_{1})(J^{s}u_{2})\right\|_{L_{xt}^{2}}
\leq C\|u_{1}\|_{X_{s,\frac{1}{6}+\epsilon}}\|u_{2}\|_{X_{s,\frac{1}{2}}}\leq C\prod_{j=1}^{2}\|u_{j}\|_{Z^{s}}.
\end{eqnarray*}
When $|\sigma|\leq4{\rm max}\left\{|\sigma_{1}|,|\sigma_{2}|\right\}$,
we have that $|\sigma|\sim |\sigma_{1}|$ or $|\sigma|\sim |\sigma_{2}|$.

\noindent
 When  $|\sigma|\sim |\sigma_{1}|$, if $\supp \left[\mathscr{F}u_{1}*\mathscr{F}u_{2}\right]\subset D_{1},$
then, $1\leq |k_{1}|\leq C,$ by using Lemma 2.3, since  $\frac{1}{6}+\epsilon\leq s\leq\frac{1}{2}-2\epsilon,$  we have that
\begin{eqnarray*}
&&\left\|\Lambda^{-1}(1-\partial_{x}^{2})^{-1}\partial_{x}\prod_{j=1}^{2}
(\partial_{x}u_{j})\right\|_{X^{s}}\leq C\left\|\langle k\rangle^{s}\langle \sigma \rangle^{-\frac{1}{6}-\epsilon}
\left[(|k|\mathscr{F}u_{1})*\mathscr{F}u_{2}\right]\right\|_{l_{k}^{2}L_{\tau}^{2}}
 \nonumber\\&&\leq\left\|(J^{s}u_{1})(J^{s}u_{2})\right\|_{L_{xt}^{2}}
\leq C\|u_{1}\|_{X_{s,\frac{1}{2}}}\|u_{2}\|_{X_{s,\frac{1}{6}+\epsilon}}\leq C\prod_{j=1}^{2}\|u_{j}\|_{Z^{s}};
\end{eqnarray*}
if $\supp \left[\mathscr{F}u_{1}*\mathscr{F}u_{2}\right]\subset D_{2},$
 by using Lemma 2.3, since $\frac{1}{6}+\epsilon\leq s\leq\frac{1}{2}-2\epsilon,$ we have
\begin{eqnarray*}
&&\left\|\Lambda^{-1}(1-\partial_{x}^{2})^{-1}\partial_{x}(\prod_{j=1}^{2}u_{j})
\right\|_{X^{s}}\leq C\left\|\langle k\rangle^{1-s}\langle \sigma \rangle^{s-\frac{2}{3}-\epsilon}
\left[(|k|\mathscr{F}u_{1})*\mathscr{F}u_{2}\right]\right\|_{l_{k}^{2}L_{\tau}^{2}}
\nonumber\\&&\leq\left\|(J^{s}u_{1})(J^{s}u_{2})\right\|_{L_{xt}^{2}}
\leq C\|u_{1}\|_{X_{s,\frac{1}{2}}}\|u_{2}\|_{X_{s,\frac{1}{6}+\epsilon}}\leq
 C\prod_{j=1}^{2}\|u_{j}\|_{Z^{s}};
\end{eqnarray*}
if $\supp \left[\mathscr{F}u_{1}*\mathscr{F}u_{2}\right]\subset D_{3},$
 by using Lemma 2.3, since $\frac{1}{6}+\epsilon\leq s\leq\frac{1}{2}-2\epsilon,$  we have that
\begin{eqnarray*}
&&\left\|\Lambda^{-1}\partial_{x}(1-\partial_{x}^{2})^{-1}\prod_{j=1}^{2}
(\partial_{x}u_{j})\right\|_{X^{s}}\leq C\left\|\langle k\rangle^{1-s}\langle \sigma \rangle^{s-1}
\left[(|k|\mathscr{F}u_{1})*\mathscr{F}u_{2}\right]\right\|_{l_{k}^{2}L_{\tau}^{2}} \nonumber\\&&\leq\left\|(J^{1-s}\Lambda^{s}u_{1})(J^{-s-1}u_{2})\right\|_{L_{xt}^{2}}\nonumber\\
&&\leq C\left\|(\langle k\rangle^{1-s}\langle\sigma\rangle^{s}\mathscr{F}u_{1})*(\langle k\rangle^{-1-s}\mathscr{F}u_{2})\right\|_{l_{k}^{2}L_{\tau}^{2}}\nonumber\\
&&\leq C\|u_{1}\|_{X_{1-s,s}}\|\langle k\rangle^{-1-s}\mathscr{F}u_{2}\|_{l_{k}^{1}L_{\tau}^{1}}\leq C\|u_{1}\|_{X_{1-s,s}}
\|u_{2}\|_{Y^{s}}\leq C\prod_{j=1}^{2}\|u_{j}\|_{Z^{s}}.
\end{eqnarray*}
When $|\sigma|\sim |\sigma_{2}|$,
if $\supp \left[\mathscr{F}u_{1}*\mathscr{F}u_{2}\right]\subset D_{1},$ then
 $1\leq |k_{1}|\leq C,$ by using Lemma 2.3, since $\frac{1}{6}+\epsilon\leq s\leq\frac{1}{2}-2\epsilon,$ we have that
\begin{eqnarray*}
&&\left\|\Lambda^{-1}(1-\partial_{x}^{2})^{-1}\partial_{x}\prod_{j=1}^{2}
(\partial_{x}u_{j})\right\|_{X^{s}}\leq C\left\|\langle k\rangle^{s}\langle \sigma \rangle^{-\frac{1}{6}-\epsilon}
\left[(|k|\mathscr{F}u_{1})*\mathscr{F}u_{2}\right]\right\|_{l_{k}^{2}L_{\tau}^{2}}
 \nonumber\\&&\leq\left\|(J^{s}u_{1})(J^{s}u_{2})\right\|_{L_{xt}^{2}}
\leq C\|u_{1}\|_{X_{s,\frac{1}{6}+\epsilon}}\|u_{2}\|_{X_{s,\frac{1}{2}}}\leq C\prod_{j=1}^{2}\|u_{j}\|_{Z^{s}};
\end{eqnarray*}
if $\supp \left[\mathscr{F}u_{1}*\mathscr{F}u_{2}\right]\subset D_{2},$
 by using Lemma 2.3, since $\frac{1}{6}+\epsilon\leq s\leq\frac{1}{2}-2\epsilon,$ we have that
\begin{eqnarray*}
&&\left\|\Lambda^{-1}(1-\partial_{x}^{2})^{-1}\partial_{x}(\prod_{j=1}^{2}u_{j})
\right\|_{X^{s}}\leq C\left\|\langle k\rangle^{1-s}
\langle \sigma \rangle^{s-\frac{2}{3}-\epsilon}
\left[(|k|\mathscr{F}u_{1})*\mathscr{F}u_{2}\right]\right\|_{l_{k}^{2}L_{\tau}^{2}} \nonumber\\&&\leq\left\|(J^{s}u_{1})(J^{1-s}\Lambda ^{s+\frac{1}{3}-\epsilon}u_{2})\right\|_{X_{0,-\frac{1}{6}-\epsilon}}
\leq C\|u_{1}\|_{X_{s,\frac{1}{2}}}\|u_{2}\|_{X_{1-s,s+\frac{1}{3}-\epsilon}}\leq C\prod_{j=1}^{2}\|u_{j}\|_{Z^{s}};
\end{eqnarray*}
if $\supp \left[\mathscr{F}u_{1}*\mathscr{F}u_{2}\right]\subset D_{3},$
 by using Lemma 2.3 and the Young inequality, since $\frac{1}{6}+\epsilon\leq s\leq\frac{1}{2}-2\epsilon,$ we have that
\begin{eqnarray*}
&&\left\|\Lambda^{-1}\partial_{x}(1-\partial_{x}^{2})^{-1}
\prod_{j=1}^{2}(\partial_{x}u_{j})\right\|_{X^{s}}\leq C\left\|\langle k\rangle^{1-s}\langle \sigma \rangle^{s-1}
\left[(|k|\mathscr{F}u_{1})*\mathscr{F}u_{2}\right]\right\|_{l_{k}^{2}L_{\tau}^{2}} \nonumber\\&&\leq\left\|(J^{-2}u_{1})(J^{1-s}\Lambda^{s}u_{2})\right\|_{L_{xt}^{2}}\nonumber\\&&\leq C\|J^{-2}u_{1}\|_{l_{k}^{1}L_{\tau}^{1}}\|u_{2}\|_{X_{1-s,s}}\leq
 C\|u_{1}\|_{Y^{s}}\|u_{2}\|_{X_{1-s,s}}\leq C\prod_{j=1}^{2}\|u_{j}\|_{Z^{s}}.
\end{eqnarray*}
(b): $|\sigma_{1}|={\rm max}\left\{|\sigma|,|\sigma_{1}|,|\sigma_{2}|\right\}.$
We consider $|\sigma_{1}|>4{\rm max}\left\{|\sigma|,|\sigma_{2}|\right\}$ and  $|\sigma_{1}|\leq4{\rm max}\left\{|\sigma|,|\sigma_{2}|\right\}$, respectively.

\noindent When $|\sigma_{1}|>4{\rm max}\left\{|\sigma|,|\sigma_{2}|\right\}$,
 then $\supp \mathscr{F}u_{1}\subset D_{3}$,  if $\supp\left[\mathscr{F}u_{1}*\mathscr{F}u_{2}\right]\subset D_{1},$
 by using Lemma 2.3,   since $\frac{1}{6}+\epsilon\leq s\leq\frac{1}{2}-2\epsilon,$  we have that
\begin{eqnarray*}
&&\left\|\Lambda^{-1}\partial_{x}(1-\partial_{x}^{2})^{-1}\prod_{j=1}^{2}
(\partial_{x}u_{j})\right\|_{X^{s}}\leq C\left\|\langle k\rangle^{s}\langle \sigma \rangle^{-\frac{1}{6}-\epsilon}
\left[(|k|\mathscr{F}u_{1})*\mathscr{F}u_{2}\right]\right\|_{l_{k}^{2}L_{\tau}^{2}}
\nonumber\\
&&\leq C\left\|(J^{1-s}\Lambda^{s}u_{1})(J^{-s}u_{2})\right\|_{X_{0,-\frac{1}{6}-\epsilon}}
\nonumber\\&&\leq C\|u_{1}\|_{X_{1-s,s}}\|u_{2}\|_{X_{s,\frac{1}{2}}}
\leq C\prod_{j=1}^{2}\|u_{j}\|_{Z^{s}};
\end{eqnarray*}
 if $\supp\left[\mathscr{F}u_{1}*\mathscr{F}u_{2}\right]\subset D_{2},$
  by using Lemma 2.3,   since $\frac{1}{6}+\epsilon\leq s\leq\frac{1}{2}-2\epsilon,$  we have that
\begin{eqnarray*}
&&\left\|\Lambda^{-1}\partial_{x}(1-\partial_{x}^{2})^{-1}\prod_{j=1}^{2}
(\partial_{x}u_{j})\right\|_{X^{s}}\leq C\left\|\langle k\rangle^{1-s}\langle \sigma \rangle^{s-\frac{2}{3}-\epsilon}
\left[(|k|\mathscr{F}u_{1})*\mathscr{F}u_{2}\right]\right\|_{l_{k}^{2}L_{\tau}^{2}}
\nonumber\\
&&\leq C\left\|\langle k\rangle^{s}\langle \sigma \rangle^{-\frac{1}{6}-\epsilon}
\left[(|k|\mathscr{F}u_{1})*\mathscr{F}u_{2}\right]\right\|_{l_{k}^{2}L_{\tau}^{2}}\nonumber\\
&&\leq C\left\|(J^{1-s}\Lambda^{s}u_{1})(J^{-s}u_{2})\right\|_{X_{0,-\frac{1}{6}-\epsilon}}
\nonumber\\&&\leq C\|u_{1}\|_{X_{1-s,s}}\|u_{2}\|_{X_{s,\frac{1}{2}}}
\leq C\prod_{j=1}^{2}\|u_{j}\|_{Z^{s}};
\end{eqnarray*}
 if $\supp\left[\mathscr{F}u_{1}*\mathscr{F}u_{2}\right]\subset D_{3},$
  by using Lemma 2.3 and the Young inequality,   since $\frac{1}{6}+\epsilon\leq s\leq\frac{1}{2}-2\epsilon,$  we have that
\begin{eqnarray*}
&&\left\|\Lambda^{-1}\partial_{x}(1-\partial_{x}^{2})^{-1}\prod_{j=1}^{2}
(\partial_{x}u_{j})\right\|_{X^{s}}\leq C\left\|\langle k\rangle^{1-s}\langle \sigma \rangle^{s-1}
\left[(|k|\mathscr{F}u_{1})*\mathscr{F}u_{2}\right]\right\|_{l_{k}^{2}L_{\tau}^{2}}
\nonumber\\
&&\leq C\left\|(J^{1-s}\Lambda^{s}u_{1})(J^{-2}u_{2})\right\|_{L_{xt}^{2}}
\nonumber\\&&\leq C\|u_{1}\|_{X_{1-s,s}}\|\langle k\rangle ^{-2}\mathscr{F}u_{2}\|_{l_{k}^{1}L_{\tau}^{1}}
\nonumber\\&&\leq C\|u_{1}\|_{X_{1-s,s}}\|u_{2}\|_{Y^{s}}\leq C\prod_{j=1}^{2}\|u_{j}\|_{Z^{s}}.
\end{eqnarray*}
When $|\sigma_{1}|\leq4{\rm max}\left\{|\sigma|,|\sigma_{2}|\right\}$
we have $|\sigma_{1}|\sim |\sigma|$ or $|\sigma_{1}|\sim |\sigma_{2}|$.

\noindent Case  $|\sigma_{1}|\sim |\sigma|$ can be proved similarly
 to case $|\sigma|={\rm max}\left\{|\sigma|,|\sigma_{1}|,|\sigma_{2}|\right\}.$

\noindent When $|\sigma_{1}|\sim |\sigma_{2}|$,
we have $\supp \mathscr{F}u_{1}\subset D_{3}.$

 \noindent
When
$\supp \left[\mathscr{F}u_{1}*\mathscr{F}u_{2}\right]\subset  D_{1},$ since $\frac{1}{6}+\epsilon\leq s\leq \frac{1}{2}-2\epsilon,$ by using Lemma 2.7, we have that
\begin{eqnarray*}
&&\left\|\Lambda^{-1}\partial_{x}(1-\partial_{x}^{2})^{-1}\prod_{j=1}^{2}
(\partial_{x}u_{j})\right\|_{X^{s}}\leq C\left\|\langle k\rangle ^{s}\langle \sigma\rangle ^{-\frac{1}{6}-\epsilon}
\left[(|k|\mathscr{F}u_{1})*\mathscr{F}u_{2}\right]\right\|_{l_{k}^{2}L_{\tau}^{2}}\nonumber\\
&&\leq C\left\|\langle k\rangle ^{\frac{1}{2}+\epsilon+s}\langle \sigma \rangle^{\frac{1}{3}}
\left[(|k|\mathscr{F}u_{1})*\mathscr{F}u_{2}\right]
\right\|_{l_{k}^{\infty}L_{\tau}^{\infty}}\nonumber\\
&&\leq C
\left\|\langle k\rangle ^{-s-\frac{1}{6}-\epsilon}\right\|_{l_{k}^{\infty}}
\|u_{1}\|_{X_{1-s,s}}\|u_{2}\|_{X_{1-s,s+\frac{1}{3}-\epsilon}}\leq C
\|u_{1}\|_{X_{1-s,s}}\|u_{2}\|_{X_{1-s,s+\frac{1}{3}-\epsilon}}\nonumber\\&&\leq C\prod_{j=1}^{2}\|u_{j}\|_{Z^{s}}.\label{3.02}
\end{eqnarray*}
When
$\supp \left[\mathscr{F}u_{1}*\mathscr{F}u_{2}\right]\subset  D_{2},$ since $\frac{1}{6}+\epsilon\leq s\leq -\frac{1}{2}-2\epsilon,$ by using Lemma 2.7, we have that
\begin{eqnarray*}
&&\left\|\Lambda^{-1}\partial_{x}(1-\partial_{x}^{2})^{-1}\prod_{j=1}^{2}
(\partial_{x}u_{j})\right\|_{X^{s}}\leq C\left\|\langle k\rangle ^{1-s}\langle \sigma\rangle ^{s-\frac{2}{3}-\epsilon}
\left[(|k|\mathscr{F}u_{1})*\mathscr{F}u_{2}\right]\right\|_{l_{k}^{2}L_{\tau}^{2}}\nonumber\\
&&\leq C\left\|\langle k\rangle ^{\frac{3}{2}-s+\epsilon}\langle \sigma \rangle^{s-\frac{1}{6}}
\left[(|k|\mathscr{F}u_{1})*\mathscr{F}u_{2}\right]
\right\|_{l_{k}^{\infty}L_{\tau}^{\infty}}\nonumber\\
&&\leq C
\left\|\langle k\rangle ^{-s-\frac{4}{3}-2\epsilon}\right\|_{l_{k}^{\infty}}
\|u_{1}\|_{X_{1-s,s}}\|u_{2}\|_{X_{1-s,s+\frac{1}{3}-\epsilon}}\leq C
\|u_{1}\|_{X_{1-s,s}}\|u_{2}\|_{X_{1-s,s+\frac{1}{3}-\epsilon}}\nonumber\\&&\leq C\prod_{j=1}^{2}\|u_{j}\|_{Z^{s}};\label{3.03}
\end{eqnarray*}
if $\supp \mathscr{F}u_{1}\subset D_{3}$ and $\supp
\left[\mathscr{F}u_{1}*\mathscr{F}u_{2}\right]\subset  D_{3},$ since $\frac{1}{6}+\epsilon\leq s\leq \frac{1}{2}-2\epsilon,$  we have that
\begin{eqnarray*}
&&\left\|\Lambda^{-1}\partial_{x}(1-\partial_{x}^{2})^{-1}\prod_{j=1}^{2}
(\partial_{x}u_{j})\right\|_{X^{s}}\leq C\left\|\langle k\rangle ^{1-s}\langle \sigma\rangle ^{s-1}
\left[(|k|\mathscr{F}u_{1})*\mathscr{F}u_{2}\right]\right\|_{l_{k}^{2}L_{\tau}^{2}}\nonumber\\
&&\leq C\left\|\langle k\rangle ^{\frac{3}{2}-s+\epsilon}\langle \sigma \rangle^{s-\frac{1}{2}+\epsilon}
\left[(|k|\mathscr{F}u_{1})*\mathscr{F}u_{2}\right]
\right\|_{l_{k}^{\infty}L_{\tau}^{\infty}}\nonumber\\
&&\leq C\left\|(|k|\mathscr{F}u_{1})
*\left(\langle k\rangle ^{2s+4\epsilon}\mathscr{F}u_{2}\right)\right\|_{l_{k}^{\infty}
l_{\tau}^{\infty}}\nonumber\\&&\leq C
\left\|\langle k\rangle ^{-2s-2+7\epsilon}\right\|_{l_{k}^{\infty}}
\|u_{1}\|_{X_{1-s,s}}\|u_{2}\|_{X_{1-s,s+\frac{1}{3}-\epsilon}}\leq C
\|u_{1}\|_{X_{1-s,s}}\|u_{2}\|_{X_{1-s,s+\frac{1}{3}-\epsilon}}\nonumber\\&&\leq C\prod_{j=1}^{2}\|u_{j}\|_{Z^{s}}.
\end{eqnarray*}
When case (c) occurs: we have that $\mathscr{F}u_{2}\subset D_{2}\cup D_{3}$.

\noindent
If $\mathscr{F}u_{2}\subset D_{2}$, we consider $\mathscr{F}u_{1}\subset D_{1}\cup D_{2}$ and $\mathscr{F}u_{1}\subset D_{3},$ respectively.

\noindent
When $\mathscr{F}u_{2}\subset D_{2}$ and $\mathscr{F}u_{1}\subset D_{1}\cup D_{2}$, by using Lemmas 2.6, 2.5, 2.3, since $\frac{1}{6}+\epsilon\leq s\leq\frac{1}{2}-2\epsilon$, we have that
\begin{eqnarray*}
&&\left\|\Lambda^{-1}\partial_{x}(1-\partial_{x}^{2})^{-1}\prod_{j=1}^{2}
(\partial_{x}u_{j})\right\|_{X^{s}}\leq C\left\|\langle k\rangle ^{s}\langle \sigma\rangle ^{-\frac{1}{6}-\epsilon}
\left[(|k|\mathscr{F}u_{1})*\mathscr{F}u_{2}\right]\right\|_{l_{k}^{2}L_{\tau}^{2}}\nonumber\\&&
\leq C\left\|\langle \sigma\rangle ^{-\frac{1}{6}-\epsilon}
\left[(\mathscr{F}u_{1})*\left(\langle k\rangle^{1-s}\langle\sigma\rangle^{s+\frac{1}{3}-\epsilon}\mathscr{F}u_{2}\right)\right]\right\|_{l_{k}^{2}L_{\tau}^{2}}
\nonumber\\
&&\leq C
\|u_{1}\|_{X_{s,\frac{1}{2}}}\|u_{2}\|_{X_{1-s,s+\frac{1}{3}-\epsilon}}\nonumber\\&&\leq C\prod_{j=1}^{2}\|u_{j}\|_{Z^{s}}.
\end{eqnarray*}
When $\mathscr{F}u_{2}\subset D_{2}$ and $\mathscr{F}u_{1}\subset  D_{3}$, by using Lemmas 2.6, 2.5, 2.3,  since $\frac{1}{6}+\epsilon\leq s\leq\frac{1}{2}-2\epsilon$,  we have that
\begin{eqnarray*}
&&\left\|\Lambda^{-1}\partial_{x}(1-\partial_{x}^{2})^{-1}\prod_{j=1}^{2}
(\partial_{x}u_{j})\right\|_{X^{s}}\leq C\left\|\langle k\rangle ^{s}\langle \sigma\rangle ^{-\frac{1}{6}-\epsilon}
\left[(|k|\mathscr{F}u_{1})*\mathscr{F}u_{2}\right]\right\|_{l_{k}^{2}L_{\tau}^{2}}\nonumber\\&&
\leq C\left\|\langle \sigma\rangle ^{-\frac{1}{6}-\epsilon}
\left[\left(\langle k\rangle^{2s}\langle\sigma\rangle^{-s-\frac{1}{3}-\epsilon}\mathscr{F}u_{1}\right)*\left(\langle k\rangle^{1-s}\langle\sigma\rangle^{s+\frac{1}{3}-\epsilon}\mathscr{F}u_{2}\right)\right]\right\|_{l_{k}^{2}L_{\tau}^{2}}
\nonumber\\&&\leq C
\|u_{1}\|_{X_{1-s,s}}\|u_{2}\|_{X_{1-s,s+\frac{1}{3}-\epsilon}}\nonumber\\
&&\leq C\prod_{j=1}^{2}\|u_{j}\|_{Z^{s}}.
\end{eqnarray*}
If $\mathscr{F}u_{2}\subset D_{3}$, we consider $\mathscr{F}u_{1}\subset D_{1}\cup D_{2}$ and $\mathscr{F}u_{1}\subset D_{3},$ respectively.

\noindent
When $\mathscr{F}u_{2}\subset D_{3}$ and $\mathscr{F}u_{1}\subset D_{1}\cup D_{2}$, by using Lemmas 2.6, 2.5, 2.3, since $\frac{1}{6}+\epsilon\leq s\leq\frac{1}{2}-2\epsilon$,  we have that
\begin{eqnarray*}
&&\left\|\Lambda^{-1}\partial_{x}(1-\partial_{x}^{2})^{-1}\prod_{j=1}^{2}
(\partial_{x}u_{j})\right\|_{X^{s}}\leq C\left\|\langle k\rangle ^{s}\langle \sigma\rangle ^{-\frac{1}{6}-\epsilon}
\left[(|k|\mathscr{F}u_{1})*\mathscr{F}u_{2}\right]\right\|_{l_{k}^{2}L_{\tau}^{2}}\nonumber\\&&
\leq C\left\|\langle \sigma\rangle ^{-\frac{1}{6}-\epsilon}
\left[(\mathscr{F}u_{1})*\left(\langle k\rangle^{1-s}\langle\sigma\rangle^{s}\mathscr{F}u_{2}\right)\right]\right\|_{l_{k}^{2}L_{\tau}^{2}}
\nonumber\\
&&\leq C
\|u_{1}\|_{X_{s,\frac{1}{2}}}\|u_{2}\|_{X_{1-s,s}}\nonumber\\&&\leq C\prod_{j=1}^{2}\|u_{j}\|_{Z^{s}}.
\end{eqnarray*}
When $\mathscr{F}u_{2}\subset D_{3}$ and $\mathscr{F}u_{1}\subset  D_{3}$, by using Lemmas 2.6, 2.5, 2.3, since $\frac{1}{6}+\epsilon\leq s\leq\frac{1}{2}-2\epsilon$, we have that
\begin{eqnarray*}
&&\left\|\Lambda^{-1}\partial_{x}(1-\partial_{x}^{2})^{-1}\prod_{j=1}^{2}
(\partial_{x}u_{j})\right\|_{X^{s}}\leq C\left\|\langle k\rangle ^{s}\langle \sigma\rangle ^{-\frac{1}{6}-\epsilon}
\left[(|k|\mathscr{F}u_{1})*\mathscr{F}u_{2}\right]\right\|_{l_{k}^{2}L_{\tau}^{2}}\nonumber\\&&
\leq C\left\|\langle \sigma\rangle ^{-\frac{1}{6}-\epsilon}
\left[\left(\langle k\rangle^{2s}\langle\sigma\rangle^{-s}\mathscr{F}u_{1}\right)*\left(\langle k\rangle^{1-s}\langle\sigma\rangle^{s}\mathscr{F}u_{2}\right)\right]\right\|_{l_{k}^{2}L_{\tau}^{2}}
\nonumber\\&&\leq C
\|u_{1}\|_{X_{1-s,s}}\|u_{2}\|_{X_{1-s,s+\frac{1}{3}-\epsilon}}\nonumber\\
&&\leq C\prod_{j=1}^{2}\|u_{j}\|_{Z^{s}}.
\end{eqnarray*}
(5) In region $\Omega_{5}$. In this region, we consider
$|k_{1}|\leq |k|^{-2}$ and $|k|^{-2}< |k_{1}|\leq 1,$ respectively.

\noindent
When $|k_{1}|\leq |k|^{-2}$, by using the Young inequality and Cauchy-Schwartz
inequality as well as Lemma 2.5, since $\frac{1}{6}+\epsilon\leq s\leq -\frac{1}{2}-2\epsilon,$  we have that
\begin{eqnarray*}
&&\left\|\Lambda^{-1}\partial_{x}(1-\partial_{x}^{2})^{-1}\prod_{j=1}^{2}
(\partial_{x}u_{j})\right\|_{X^{s}}\leq C\left\|
\langle k\rangle^{s}\langle \sigma\rangle ^{-\frac{1}{6}-\epsilon}
\left[(|k|\mathscr{F}u_{1})*\mathscr{F}u_{2}\right]\right\|_{l_{k}^{2}L_{\tau}^{2}}\nonumber\\
&&\leq C\left\|\langle k\rangle ^{s-2}\left[\mathscr{F}u_{1}*\mathscr{F}u_{2}\right]
\right\|_{l_{k}^{2}L_{\tau}^{2}}\nonumber\\
&&\leq C\left\|\left[\mathscr{F}u_{1}*(\langle k\rangle ^{s}\mathscr{F}u_{2})\right]
\right\|_{l_{k}^{\infty}L_{\tau}^{2}}\nonumber\\
&&\leq C\left\|\mathscr{F}u_{1}\right\|_{l_{k}^{2}l_{\tau}^{2}}\|u_{2}\|_{Y^{s}}
\leq C\prod_{j=1}^{2}\|u_{j}\|_{Z^{s}}.\label{3.04}
\end{eqnarray*}
When  $|k|^{-2}< |k_{1}|\leq 1.$
In this  region, we consider (a)-(c) of Lemma 2.7, respectively.

\noindent When (a) occurs:
 by using Lemma 2.5, the Young inequality and Cauchy-Schwartz inequality,since $\frac{1}{6}+\epsilon\leq s\leq\frac{1}{2}-2\epsilon,$  we have that
\begin{eqnarray*}
&&\left\|\Lambda^{-1}\partial_{x}(1-\partial_{x}^{2})^{-1}
\prod_{j=1}^{2}(\partial_{x}u_{j})\right\|_{X^{s}}
\leq C\left\|\langle k\rangle^{s}\langle \sigma\rangle ^{-\frac{1}{6}-\epsilon}
\left[(|k|\mathscr{F}u_{1})*\mathscr{F}u_{2}\right]\right\|_{l_{k}^{2}L_{\tau}^{2}}\nonumber\\
&&\leq C\left\|\left[(|k|^{\frac{5}{6}-\epsilon}\mathscr{F}u_{1})*(\langle k\rangle ^{s-\frac{1}{3}-2\epsilon}\mathscr{F}u_{2})\right]
\right\|_{l_{k}^{2}L_{\tau}^{2}}\nonumber\\
&&\leq C\left\||k|^{\frac{5}{6}-\epsilon}\mathscr{F}u_{1}\right\|_{l_{k}^{1}l_{\tau}^{2}}
\|u_{2}\|_{Y^{s}}\nonumber\\&&
\leq C\|\mathscr{F}u_{1}\|_{l_{k}^{2}L_{\tau}^{2}}\|u_{2}\|_{Y^{s}}
\leq C\prod_{j=1}^{2}\|u_{j}\|_{Z^{s}}.
\end{eqnarray*}
When (b) occurs:
 by using Lemma 2.5, the Young inequality and Cauchy-Schwartz inequality,  since $\frac{1}{6}+2\epsilon\leq s\leq\frac{1}{2}-2\epsilon,$ we have that
\begin{eqnarray*}
&&\left\|\Lambda^{-1}\partial_{x}(1-\partial_{x}^{2})^{-1}\prod_{j=1}^{2}
(\partial_{x}u_{j})\right\|_{X^{s}}\leq C\left\|\langle k\rangle ^{s}
\langle \sigma\rangle ^{-\frac{1}{6}-\epsilon}
\left[(|k|\mathscr{F}u_{1})*\mathscr{F}u_{2}\right]\right\|_{l_{k}^{2}L_{\tau}^{2}}\nonumber\\
&&\leq C\left\|\left[(|k|^{\frac{5}{6}-\epsilon}\langle \sigma \rangle
^{\frac{1}{6}+\epsilon}\mathscr{F}u_{1})*(\langle k\rangle ^{s-\frac{1}{3}-2\epsilon}\mathscr{F}u_{2})\right]
\right\|_{l_{k}^{2}L_{\tau}^{2}}\nonumber\\
&&\leq C\left\||k|^{\frac{5}{6}-\epsilon}\langle \sigma \rangle ^{\frac{1}{6}+\epsilon}\mathscr{F}u_{1}\right\|_{l_{k}^{1}l_{\tau}^{2}}\|u_{2}\|_{Y^{s}}\nonumber\\&&
\leq C\|\langle \sigma \rangle ^{\frac{1}{6}+\epsilon}\mathscr{F}u_{1}\|_{l_{k}^{2}L_{\tau}^{2}}\|u_{2}\|_{Y^{s}}\nonumber\\&&
\leq C\|u_{1}\|_{X_{0,\frac{1}{6}+\epsilon}}\|u_{2}\|_{Y^{s}}
\leq C\prod_{j=1}^{2}\|u_{j}\|_{Z^{s}}.
\end{eqnarray*}
When (c)  occurs:
by using Lemma 2.5, the Young inequality and Cauchy-Schwartz inequality, since $\frac{1}{6}+\epsilon\leq s\leq\frac{1}{2}-2\epsilon,$ we have that
\begin{eqnarray*}
&&\left\|\Lambda^{-1}\partial_{x}(1-\partial_{x}^{2})^{-1}\prod_{j=1}^{2}
(\partial_{x}u_{j})\right\|_{X^{s}}\leq C\left\|\langle k\rangle ^{s}\langle
\sigma\rangle ^{-\frac{1}{6}-\epsilon}
\left[\mathscr{F}u_{1}*\mathscr{F}u_{2}\right]\right\|_{l_{k}^{2}L_{\tau}^{2}}\nonumber\\
&&\leq C\left\|\left[(|k|^{\frac{5}{6}-\epsilon}\mathscr{F}u_{1})*(\langle k\rangle ^{s}
\langle \sigma \rangle ^{\frac{1}{6}+\epsilon}\mathscr{F}u_{2})\right]
\right\|_{l_{k}^{2}L_{\tau}^{2}}\nonumber\\
&&\leq C\left\||k|^{\frac{5}{6}-\epsilon}\mathscr{F}u_{1}\right\|_{l_{k}^{1}l_{\tau}^{1}}
\|u_{2}\|_{X_{s,\frac{1}{6}+\epsilon}}
\leq C\|u_{1}\|_{Y^{s}}\|u_{2}\|_{X_{s,\frac{1}{6}+\epsilon}}
\leq C\prod_{j=1}^{2}\|u_{j}\|_{Z^{s}}.
\end{eqnarray*}
(6)In region $\Omega_{6}$.
This region can be proved similarly to $\Omega_{4}$.

\noindent (7)In region $\Omega_{7}$.
This region can be proved similarly to $\Omega_{5}$.

\noindent (8)In region $\Omega_{8}$. In this  region, we consider (a)-(c) of Lemma 2.7, respectively.

\noindent When (a) occurs:
if $|\sigma|>4{\rm max}\left\{|\sigma_{1}|,|\sigma_{2}|\right\}$ and $\supp \mathscr{F}u_{1}\subset D_{1}\cup D_{2}$, by using Lemmas 2.5, 2.7, 2.3,
since $\supp\left( \mathscr{F}u_{1}*\mathscr{F}u_{2}\right) \subset D_{3}$ and $\frac{1}{6}+\epsilon\leq s\leq \frac{1}{2}-2\epsilon,$  we have that
\begin{eqnarray*}
&&\left\|\Lambda^{-1}\partial_{x}(1-\partial_{x}^{2})^{-1}\prod_{j=1}^{2}
(\partial_{x}u_{j})\right\|_{X^{s}}\leq C\left\|\langle k\rangle ^{2-s}\langle \sigma \rangle ^{s-1}
(\mathscr{F}u_{1}*\mathscr{F}u_{2})\right\|_{l_{k}^{2}L_{\tau}^{2}}\nonumber\\&&
\leq C\left\|(J^{s}u_{1})(J^{s}u_{2})\right\|_{L_{xt}^{2}}\leq
C\|u_{1}\|_{X_{s,1/2}}\|u_{2}\|_{X_{s,\frac{1}{6}+\epsilon}}\leq C\prod_{j=1}^{2}\|u_{j}\|_{Z^{s}}.
\end{eqnarray*}
If $|\sigma|>4{\rm max}\left\{|\sigma_{1}|,|\sigma_{2}|\right\}$ and $\supp \mathscr{F}u_{1}\subset  D_{3}$,
 by using Lemma 2.3, since $\frac{1}{6}+\epsilon\leq s\leq \frac{1}{2}-2\epsilon,$  we have
\begin{eqnarray*}
&&\left\|\Lambda^{-1}\partial_{x}(1-\partial_{x}^{2})^{-1}
\prod_{j=1}^{2}(\partial_{x}u_{j})\right\|_{X^{s}}\leq C\left\|\langle k\rangle ^{1+s}\langle \sigma \rangle ^{-\frac{1}{6}-\epsilon}
(\mathscr{F}u_{1}*\mathscr{F}u_{2})\right\|_{l_{k}^{2}L_{\tau}^{2}}\nonumber\\&&
\leq \left\|\left[J^{1-s}\Lambda^{s}u_{1}\right]\left[J^{-s}u_{2}\right]\right\|_{X_{0,-\frac{1}{6}-\epsilon}} \leq \|u_{1}\|_{X_{1-s,s}}\|u\|_{X_{s,\frac{1}{2}}}\leq C\prod_{j=1}^{2}\|u_{j}\|_{Z^{s}}.
\end{eqnarray*}
If $|\sigma|\leq 4{\rm max}\left\{|\sigma_{1}|,|\sigma_{2}|\right\},$ we have $|\sigma|\sim |\sigma_{1}|$ or $|\sigma|\sim |\sigma_{2}|.$

\noindent
When $|\sigma|\sim |\sigma_{1}|$. In this case, $\supp\left( \mathscr{F}u_{1}*\mathscr{F}u_{2}\right) \subset D_{3},$
 then by using the Young inequality, since $\frac{1}{6}+\epsilon\leq s\leq \frac{1}{2}-2\epsilon,$ we have that
\begin{eqnarray*}
&&\left\|\Lambda^{-1}\partial_{x}(1-\partial_{x}^{2})^{-1}\prod_{j=1}^{2}
(\partial_{x}u_{j})\right\|_{X^{s}}\leq C\left\|\langle k\rangle ^{2-s}
\langle \sigma \rangle^{s-1}\left(\mathscr{F}u_{1}*\mathscr{F}u_{2}\right) \right\|_{l_{k}^{2}L_{\tau}^{2}}\nonumber\\
&&\leq C\left\|(\langle k\rangle ^{1-s}
\langle \sigma \rangle^{s}\mathscr{F}u_{1})*\left[\langle k\rangle^{-2}\mathscr{F}u_{2}\right]\right\|_{l_{k}^{2}L_{\tau}^{2}}\nonumber\\
&&\leq C\|u_{1}\|_{X_{1-s,s}}\|\langle k\rangle^{-2}\mathscr{F}u_{2}\|_{l_{k}^{1}L_{\tau}^{1}}\nonumber\\
&&\leq C\|u_{1}\|_{X_{1-s,s}}\|\langle k\rangle^{s}\mathscr{F}u_{2}\|_{l_{k}^{2}L_{\tau}^{1}}\leq
C\prod_{j=1}^{2}\|u_{j}\|_{Z^{s}}.
\end{eqnarray*}
When  $|\sigma|\sim |\sigma_{2}|$, this case can be proved similarly to case  $|\sigma|\sim |\sigma_{1}|$.

\noindent When (b)  occurs: we consider case $|\sigma_{1}|>4{\rm max}\left\{|\sigma|,|\sigma_{2}|\right\}$ and $|\sigma_{1}|\leq4{\rm max}\left\{|\sigma|,|\sigma_{2}|\right\}$, respectively.

\noindent When $|\sigma_{1}|>4{\rm max}\left\{|\sigma|,|\sigma_{2}|\right\}$
which yields $\supp\mathscr{F}u_{1}\subset D_{3} $, we consider $\supp\mathscr{F}u_{2}\subset D_{1},$
$\supp\mathscr{F}u_{2}\subset D_{2}$ and $\supp\mathscr{F}u_{2}\subset D_{3},$ respectively.

\noindent If $\supp\mathscr{F}u_{2}\subset D_{1},$
  by using Lemmas 2.5, 2.3, since $\frac{1}{6}+\epsilon\leq s\leq \frac{1}{2}-2\epsilon,$ we have that
\begin{eqnarray*}
&&\left\|\Lambda^{-1}\partial_{x}(1-\partial_{x}^{2})^{-1}\prod_{j=1}^{2}
(\partial_{x}u_{j})\right\|_{X^{s}}\leq C\left\|\langle k\rangle ^{s+1}\langle \sigma \rangle ^{-\frac{1}{6}-\epsilon}(\mathscr{F}u_{1}*\mathscr{F}u_{2})\right\|_{l_{k}^{2}L_{\tau}^{2}}\nonumber\\
&&\leq C\left\|(J^{1-s}\Lambda ^{s}u_{1})(J^{-s}u_{2})\right\|_{X_{0,-\frac{1}{6}-\epsilon}}
\leq C\|u_{1}\|_{X_{1-s,s}}\|u_{2}\|_{X_{s,\frac{1}{2}}}\leq
C\prod_{j=1}^{2}\|u_{j}\|_{Z^{s}}.
\end{eqnarray*}
If $\supp\mathscr{F}u_{2}\subset D_{2},$
  by using Lemmas 2.5, 2.3,  since $\frac{1}{6}+\epsilon\leq s\leq \frac{1}{2}-2\epsilon,$ we have that
\begin{eqnarray*}
&&\left\|\Lambda^{-1}\partial_{x}(1-\partial_{x}^{2})^{-1}\prod_{j=1}^{2}
(\partial_{x}u_{j})\right\|_{X^{s}}\leq C\left\|\langle k\rangle ^{s+1}\langle \sigma \rangle ^{-\frac{1}{6}-\epsilon}(\mathscr{F}u_{1}*\mathscr{F}u_{2})\right\|_{l_{k}^{2}L_{\tau}^{2}}\nonumber\\
&&\leq C\left\|(J^{1-s}\Lambda ^{s}u_{1})(J^{-s}u_{2})\right\|_{X_{0,-\frac{1}{6}-\epsilon}}
\leq C\|u_{1}\|_{X_{1-s,s}}\|u_{2}\|_{X_{0,\frac{1}{2}}}\nonumber\\&&\leq
C\|u_{1}\|_{X_{1-s,s}}\|u_{2}\|_{X_{1-s,s+\frac{1}{3}-\epsilon}}\nonumber\\&&\leq
C\prod_{j=1}^{2}\|u_{j}\|_{Z^{s}}.
\end{eqnarray*}
If $\supp\mathscr{F}u_{2}\subset D_{3},$
  by using Lemmas 2.5, 2.3,  since $\frac{1}{6}+\epsilon\leq s\leq \frac{1}{2}-2\epsilon,$ we have that
\begin{eqnarray*}
&&\left\|\Lambda^{-1}\partial_{x}(1-\partial_{x}^{2})^{-1}
\prod_{j=1}^{2}(\partial_{x}u_{j})\right\|_{X^{s}}\leq C\left\|\langle k\rangle ^{s+1}
\langle \sigma\rangle ^{-\frac{1}{6}-\epsilon}
\left[\mathscr{F}u_{1}*\mathscr{F}u_{2}\right]\right\|_{l_{k}^{2}L_{\tau}^{2}}\nonumber\\
&&\leq C\left\|\langle k\rangle ^{s+\frac{3}{2}+\epsilon}\langle \sigma \rangle^{\frac{1}{3}}
\left[\mathscr{F}u_{1}*\mathscr{F}u_{2}\right]
\right\|_{l_{k}^{\infty}L_{\tau}^{\infty}}\nonumber\\
&&\leq C\left\|\left(\langle k\rangle ^{s+\frac{5}{2}+\epsilon}\mathscr{F}u_{1}
\right)*\mathscr{F}u_{2}\right\|_{l_{k}^{\infty}l_{\tau}^{\infty}}\nonumber\\&&\leq C
\left\|\langle k\rangle ^{-3s+\frac{1}{2}+\epsilon}\right\|_{l_{k}^{\infty}}
\prod_{j=1}^{2}\|u_{j}\|_{X_{1-s,s}}\leq C
\prod_{j=1}^{2}\|u_{j}\|_{X_{1-s,s}}\leq C\prod_{j=1}^{2}\|u_{j}\|_{Z^{s}}.
\end{eqnarray*}
If $|\sigma_{1}|\leq 4{\rm max}\left\{|\sigma|,|\sigma_{2}|\right\},$ we have that
$|\sigma_{1}|\sim |\sigma|$ or $|\sigma_{1}|\sim |\sigma_{2}|.$

\noindent
When $|\sigma_{1}|\sim |\sigma|$, this case can be proved similarly to
$|\sigma|={\rm max}\left\{|\sigma|,|\sigma_{1}|,|\sigma_{2}|\right\}.$

\noindent
If $|\sigma_{1}|\sim |\sigma_{2}|,$
 $\supp\mathscr{F}u_{j}\subset D_{3}$ with $j=1,2$,  since $\frac{1}{6}+\epsilon\leq s\leq\frac{1}{2}-2\epsilon,$ we have that
\begin{eqnarray*}
&&\left\|\Lambda^{-1}\partial_{x}(1-\partial_{x}^{2})^{-1}
\prod_{j=1}^{2}(\partial_{x}u_{j})\right\|_{X^{s}}\leq C\left\|\langle k\rangle ^{s+1}
\langle \sigma\rangle ^{-\frac{1}{6}-\epsilon}
\left[\mathscr{F}u_{1}*\mathscr{F}u_{2}\right]\right\|_{l_{k}^{2}L_{\tau}^{2}}\nonumber\\
&&\leq C\left\|\langle k\rangle ^{s+\frac{3}{2}+\epsilon}\langle \sigma \rangle^{\frac{1}{3}}
\left[\mathscr{F}u_{1}*\mathscr{F}u_{2}\right]
\right\|_{l_{k}^{\infty}L_{\tau}^{\infty}}\nonumber\\
&&\leq C\left\|\left(\langle k\rangle ^{s+\frac{5}{2}+\epsilon}\mathscr{F}u_{1}
\right)*\mathscr{F}u_{2}\right\|_{l_{k}^{\infty}l_{\tau}^{\infty}}\nonumber\\&&\leq C
\left\|\langle k\rangle ^{-3s+\frac{1}{2}+\epsilon}\right\|_{l_{k}^{\infty}}
\prod_{j=1}^{2}\|u_{j}\|_{X_{1-s,s}}\leq C
\prod_{j=1}^{2}\|u_{j}\|_{X_{1-s,s}}\leq C\prod_{j=1}^{2}\|u_{j}\|_{Z^{s}}.
\end{eqnarray*}
$(c): |\sigma_{2}|={\rm max}\left\{|\sigma|,|\sigma_{1}|,|\sigma_{2}|\right\}$ can be proved similarly to
$ |\sigma_{1}|={\rm max}\left\{|\sigma|,|\sigma_{1}|,|\sigma_{2}|\right\}$.

We have completed the proof of Lemma 3.1.

\noindent {\bf Remark 3.}  In the process of proving Lemma 3.1, the cases $ \Omega_{3}$ is  the most difficult to deal with and cases $ \Omega_{j}$ with $j=2,3$
require restriction $s>\frac{1}{6},$  more precisely,  case (b) of region $\Omega_{2}$ determines that $s\geq \frac{1}{6}+\epsilon$ is necessary.

\begin{Lemma}\label{Lemma3.2}
Let  $\frac{1}{6}+\epsilon\leq s\leq\frac{1}{2}-2\epsilon$, $0<\epsilon\ll1$. Then, we have that
\begin{eqnarray}
      \left\|\Lambda^{-1}\partial_{x}(1-\partial_{x}^{2})^{-1}
      \prod_{j=1}^{2}(\partial_{x}u_{j})\right\|_{Y^{s}}
      \leq C\prod\limits_{j=1}^{2}\|u_{j}\|_{Z^{s}}.
        \label{3.05}
\end{eqnarray}
\end{Lemma}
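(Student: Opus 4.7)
The plan is to first reduce the $Y^s$-norm estimate to a bilinear estimate in $X_{s,-\frac{1}{2}+\epsilon}$ by a Cauchy--Schwarz argument in $\tau$, and then to redo the eight-region decomposition used in the proof of Lemma 3.1 case by case, but with the easier output weight $\langle\sigma\rangle^{-1/2+\epsilon}$ replacing the $\sigma$-weights that appear there.

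\textbf{Step 1 (reduction to an $X$-type estimate).} Set $F=\partial_x(1-\partial_x^2)^{-1}\prod_{j=1}^{2}(\partial_x u_j)$ and $v=\Lambda^{-1}F$, so that $\mathscr{F}v(k,\tau)=\langle\sigma\rangle^{-1}\mathscr{F}F(k,\tau)$. Writing $\langle\sigma\rangle^{-1}=\langle\sigma\rangle^{-\frac{1}{2}-\epsilon}\langle\sigma\rangle^{-\frac{1}{2}+\epsilon}$ and applying Cauchy--Schwarz in $\tau$, and noting that $\|\langle\sigma\rangle^{-\frac{1}{2}-\epsilon}\|_{L^2_\tau}$ is a $k$-independent finite constant because $\epsilon>0$, one obtains
\[
\|v\|_{Y^s}=\bigl\|\langle k\rangle^{s}\mathscr{F}v\bigr\|_{l^{2}_{k}L^{1}_{\tau}}\leq C\,\|F\|_{X_{s,-\frac{1}{2}+\epsilon}}.
\]
It therefore suffices to show
\[
\|F\|_{X_{s,-\frac{1}{2}+\epsilon}}\leq C\prod_{j=1}^{2}\|u_j\|_{Z^s}.
\]

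\textbf{Step 2 (bilinear analysis).} Decompose $(\R\times\dot{Z}_{\lambda})^2\subset\bigcup_{j=1}^{8}\Omega_j$ as in the proof of Lemma 3.1 and, within each $\Omega_j$, further split according to the dominant modulation via Lemma 2.7 and the positions of the supports of $\mathscr{F}u_1$, $\mathscr{F}u_2$, and $\mathscr{F}F$ among $D_1,\ldots,D_5$. In every sub-case we repeat the same chain of estimates carried out in Lemma 3.1 (combining Lemmas 2.3, 2.5, 2.7 with H\"older's and Young's inequalities), the output weight $\langle k\rangle^{s}\langle\sigma\rangle^{-1/2+\epsilon}$ now playing the role of the various output weights appearing in the $X^s$-part of $\|\cdot\|_{Z^s}$. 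When the output $(\tau,k)$ lies in $D_1\cup D_2\cup D_5$, so that $\langle\sigma\rangle\leq C\langle k\rangle^3$, a direct computation using $\frac{1}{6}+\epsilon\leq s\leq\frac{1}{2}-2\epsilon$ shows that $\langle k\rangle^{s}\langle\sigma\rangle^{-1/2+\epsilon}$ is pointwise dominated by the corresponding $X^s$-weight of Lemma 3.1, so the estimate on these sub-regions is immediate from Lemma 3.1.

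\textbf{Main obstacle.} The genuinely new work concerns the output region $D_3\cup D_4$, where $\langle\sigma\rangle\geq 6|k|^3$ is unbounded above and the direct pointwise comparison with the $X_{1-s,s}$-weight $\langle k\rangle^{1-s}\langle\sigma\rangle^{s-1}$ of Lemma 3.1 fails. Here one exploits the lower bound $\langle\sigma\rangle\geq 6|k|^3$ to convert the $\sigma$-decay into $k$-decay through $\langle\sigma\rangle^{-\frac{1}{2}+\epsilon}\leq C\langle k\rangle^{-\frac{3}{2}+3\epsilon}$, obtaining the very favorable effective output weight $\langle k\rangle^{s-\frac{3}{2}+3\epsilon}$. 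Combined with the two derivatives absorbed by the symbol $\partial_x(1-\partial_x^2)^{-1}$ and the resolution of the maximum modulation via Lemma 2.7, this weight is easily controlled by $\prod_{j=1}^2\|u_j\|_{Z^s}$ through exactly the same Young and H\"older arguments used in the $D_3\cup D_4$ sub-cases of Lemma 3.1, which completes the proof.
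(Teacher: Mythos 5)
There is a genuine gap, and it sits exactly at your Step 1. The Cauchy--Schwarz reduction $\|\Lambda^{-1}F\|_{Y^{s}}\leq C\|F\|_{X_{s,-\frac{1}{2}+\epsilon}}$ is a true inequality, but the estimate you then set out to prove, $\|F\|_{X_{s,-\frac{1}{2}+\epsilon}}\leq C\prod_{j}\|u_{j}\|_{Z^{s}}$, is \emph{false}: it fails on the high$\times$high$\rightarrow$low interaction with dominant output modulation. Take the data of Theorem 1.1, $\mathscr{F}u_{1}$ concentrated at $k_{1}=N$, $\mathscr{F}u_{2}$ at $k_{2}=1-N$, both with $|\sigma_{j}|\leq 1$, so $\|u_{j}\|_{Z^{s}}\sim N^{s}$. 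The output sits at $k=1$ with $\langle\sigma\rangle\sim|kk_{1}k_{2}|\sim N^{2}$, and the symbol contributes $|k|\langle k\rangle^{-2}|k_{1}||k_{2}|\sim N^{2}$; hence $\|F\|_{X_{s,-\frac{1}{2}+\epsilon}}\gtrsim N^{2}\cdot N^{-1+2\epsilon}=N^{1+2\epsilon}$, while $\prod_{j}\|u_{j}\|_{Z^{s}}\sim N^{2s}\leq N^{1-4\epsilon}$. The point is that $\langle\sigma\rangle^{-\frac{1}{2}+\epsilon}$ recovers only $|kk_{1}k_{2}|^{-\frac{1}{2}+\epsilon}$, i.e.\ half of the smoothing, whereas the two derivatives $|k_{1}||k_{2}|$ on the high input frequencies require the full power. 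Your ``main obstacle'' paragraph does not repair this: converting $\langle\sigma\rangle^{-\frac{1}{2}+\epsilon}$ into $\langle k\rangle^{-\frac{3}{2}+3\epsilon}$ puts the decay on the \emph{output} frequency $k$, which in regions $\Omega_{2},\Omega_{3}$ is the small one, so it gives no help against $|k_{1}||k_{2}|$.

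The paper avoids this precisely by \emph{not} performing Cauchy--Schwarz in the case (a) of Lemma 2.7 (output modulation maximal). There it keeps the full weight $\langle\sigma\rangle^{-1}$ inside the $l_{k}^{2}L_{\tau}^{1}$ norm, bounds it pointwise by $|kk_{1}k_{2}|^{-1}$ via Lemma 2.7 so that both input derivatives are cancelled, and closes with Young's inequality $L_{\tau}^{1}*L_{\tau}^{1}\to L_{\tau}^{1}$ against the $Y^{s}$ components of $\|u_{j}\|_{Z^{s}}$ (this is why $Y^{s}$ is built into $Z^{s}$ in the first place). Only in cases (b) and (c), where an \emph{input} modulation dominates and the corresponding $X_{1-s,s}$ or $X_{s,\frac{5}{6}-\epsilon}$ weight supplies the missing smoothing, does the paper use your reduction $X_{s,\frac{1}{2}+\epsilon}\hookrightarrow Y^{s}$ (or the stronger $\|\cdot\|_{Y^{s}}\leq C\|\cdot\|_{Z^{s}}\leq C\|\Lambda^{-1}\cdot\|_{X_{s,\frac{5}{6}-\epsilon}}$). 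Your proof needs to be split the same way: the uniform passage to $X_{s,-\frac{1}{2}+\epsilon}$ discards exactly the $L^{1}_{\tau}$-gain that the lemma is designed to exploit.
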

{\bf Proof.}
 Obviously, $\left(\R\times\dot{Z}_{\lambda}\right)^{2}\subset
 \bigcup\limits_{j=1}^{8}\Omega_{j},$
where
$\Omega_{j} (1\leq j\leq 8)$ are defined as Lemma 3.1.

\noindent
(1) In region $\Omega_{1}$.
By using Lemma 2.5 and the H\"older inequality
as well as the Cauchy-Schwartz inequality, since
$\frac{1}{6}+\epsilon\leq s\leq\frac{1}{2}-2\epsilon,$ we have that
\begin{eqnarray*}
&&\left\|\Lambda^{-1}\partial_{x}(1-\partial_{x}^{2})^{-1}
\prod_{j=1}^{2}(\partial_{x}u_{j})\right\|_{Y^{s}}\leq C
\left\|\Lambda^{-1}\partial_{x}(1-\partial_{x}^{2})^{-1}
\prod_{j=1}^{2}(\partial_{x}u_{j})\right\|_{X_{s,\frac{5}{6}-\epsilon}}\nonumber\\
&&\leq C\left\||k|\langle \sigma\rangle^{-\frac{1}{6}-\epsilon}
\left(\mathscr{F}u_{1}*\mathscr{F}u_{2}\right)
\right\|_{l_{k}^{2}L_{\tau}^{2}}\nonumber\\
&&\leq C\||k|\|_{l_{k}^{2}}\left\|\mathscr{F}u_{1}*\mathscr{F}u_{2}\right\|_{l_{k}^{\infty}L_{\tau}^{2}}\nonumber\\
&&\leq C\|\mathscr{F}u_{1}\|_{l_{k}^{2}L_{\tau}^{2}}\|\mathscr{F}u_{2}\|_{l_{k}^{2}L_{\tau}^{1}}
\leq C\|u_{1}\|_{X_{s,\frac{1}{6}+\epsilon}}\|u_{2}\|_{Y^{s}}\leq C
\prod_{j=1}^{2}\|u_{j}\|_{Z^{s}}.
\end{eqnarray*}
(2) In region $\Omega_{2}$.
We consider (a)-(c) of Lemma 2.7, respectively.

\noindent (a) Case $|\sigma|={\rm max}\left\{|\sigma|,|\sigma_{1}|,|\sigma_{2}|\right\},$
by using Lemma 2.7,  the Young inequality,
   since $\frac{1}{6}+\epsilon\leq s\leq\frac{1}{2}-2\epsilon,$ we have that
\begin{eqnarray*}
&&\left\|\Lambda^{-1}\partial_{x}(1-\partial_{x}^{2})^{-1}\prod_{j=1}^{2}
(\partial_{x}u_{j})\right\|_{Y^{s}}\leq\left\|\langle k\rangle^{s-1}\langle\sigma\rangle^{-1}\left[(|k|\mathscr{F}u_{1})*(
|k|\mathscr{F}u_{2})\right]\right\|_{l_{k}^{2}L_{\tau}^{1}}\nonumber\\
&&\leq C\left\|(\langle k\rangle ^{s}\mathscr{F}u_{1})*(\langle k\rangle ^{s}\mathscr{F}u_{2})\right\|_{l_{k}^{\infty}L_{\tau}^{1}}\nonumber\\
&&\leq \left\|(\langle k\rangle ^{s}\mathscr{F}u_{1})*(\langle k\rangle ^{s}\mathscr{F}u_{2})\right\|_{l_{k}^{\infty}L_{\tau}^{1}}\leq C\prod_{j=1}^{2}\|u_{j}\|_{Y^{s}}\leq C\prod_{j=1}^{2}\|u_{j}\|_{Z^{s}}.
\end{eqnarray*}
(b) Case $|\sigma_{1}|={\rm max}\left\{|\sigma|,|\sigma_{1}|,|\sigma_{2}|\right\},$
in this case, we consider the following cases:
\begin{eqnarray*}
(i): |\sigma_{1}|>4{\rm max}\left\{|\sigma|,|\sigma_{2}|\right\},\quad
(ii):|\sigma_{1}|\leq4{\rm max}\left\{|\sigma|,|\sigma_{2}|\right\},
\end{eqnarray*}
respectively.

\noindent
When (i) occurs:
if $\supp \mathscr{F}u_{1}\subset D_{1}$ which yields that $1\leq|k|\leq C$,
by using Lemmas 2.5, 2.7, 2.3, since
$\frac{1}{6}+\epsilon\leq s\leq\frac{1}{2}-2\epsilon,$
 we have that
\begin{eqnarray*}
&&\left\|\Lambda^{-1}\partial_{x}(1-\partial_{x}^{2})^{-1}
\prod_{j=1}^{2}(\partial_{x}u_{j})\right\|_{Y^{s}}\leq C\left\|\Lambda^{-1}\partial_{x}(1-\partial_{x}^{2})^{-1}
\prod_{j=1}^{2}(\partial_{x}u_{j})\right\|_{Z^{s}}\nonumber\\&&
\leq C\left\|\partial_{x}(1-\partial_{x}^{2})^{-1}\prod_{j=1}^{2}
(\partial_{x}u_{j})\right\|_{X_{s,-\frac{1}{6}-\epsilon}}
\leq C\left\|\langle k\rangle ^{s-1}\langle \sigma \rangle ^{-\frac{1}{6}-\epsilon}
(|k|\mathscr{F}u_{1})*(|k|\mathscr{F}u_{2})\right\|_{l_{k}^{2}L_{\tau}^{2}}\nonumber\\&&
\leq C\left\|\langle \sigma \rangle ^{-\frac{1}{6}-\epsilon}(\langle k\rangle^{s}
\langle\sigma\rangle^{\frac{5}{6}-\epsilon}\mathscr{F}u_{1})*(\langle k\rangle ^{-s+\frac{1}{3}+2\epsilon}\mathscr{F}u_{2})\right\|_{l_{k}^{2}L_{\tau}^{2}}\nonumber\\
&&\leq C\left\|\left(J^{s}\Lambda ^{\frac{5}{6}-\epsilon}u_{1}\right)
\left(J^{-s+\frac{1}{3}+2\epsilon}u_{2}\right)\right\|_{X_{0,-\frac{1}{6}-\epsilon}}\leq C\|u_{1}\|_{X_{s,\frac{5}{6}-\epsilon}}\|u_{2}\|_{X_{s,\frac{1}{2}}}\leq
C\prod_{j=1}^{2}\|u_{j}\|_{Z^{s}};
\end{eqnarray*}
if $\supp \mathscr{F}u_{1}\subset D_{2},$  by using Lemmas 2.5, 2.7, 2.3, since $\frac{1}{6}+\epsilon\leq s\leq\frac{1}{2}-2\epsilon$,
 we have that
\begin{eqnarray*}
&&\left\|\Lambda^{-1}\partial_{x}(1-\partial_{x}^{2})^{-1}
\prod_{j=1}^{2}(\partial_{x}u_{j})\right\|_{Y^{s}}\leq C\left\|\Lambda^{-1}\partial_{x}(1-\partial_{x}^{2})^{-1}
\prod_{j=1}^{2}(\partial_{x}u_{j})\right\|_{Z^{s}}\nonumber\\&&
\leq C\left\|\partial_{x}(1-\partial_{x}^{2})^{-1}
\prod_{j=1}^{2}(\partial_{x}u_{j})\right\|_{X_{s,-\frac{1}{6}-\epsilon}}
\nonumber\\&&\leq \left\|\langle k\rangle ^{s-1}\langle \sigma \rangle ^{-\frac{1}{6}-\epsilon}
\left[(|k|\mathscr{F}u_{1})*(|k|\mathscr{F}u_{2})\right]
\right\|_{l_{k}^{2}L_{\tau}^{2}}\nonumber\\&&
\leq C\left\|\left(J^{1-s}\Lambda ^{s+\frac{1}{3}-\epsilon}u_{1}\right)
\left(J^{-s+\frac{1}{3}+2\epsilon}u_{2}\right)
\right\|_{X_{0,-\frac{1}{6}-\epsilon}}\nonumber\\
&&\leq C\|u\|_{X_{1-s,s+\frac{1}{3}-\epsilon}}\|u\|_{X_{s,\frac{1}{2}}}
\leq
C\prod_{j=1}^{2}\|u_{j}\|_{Z^{s}}.
\end{eqnarray*}
When (ii) occurs: we have $|\sigma_{1}|\sim |\sigma|$ or $|\sigma_{1}|\sim |\sigma_{2}|$.

\noindent
When $|\sigma_{1}|\sim |\sigma|$  is valid, this case can be proved similarly to
$|\sigma|={\rm max}\left\{|\sigma|,|\sigma_{1}|,|\sigma_{2}|\right\}.$
When $|\sigma_{1}|\sim |\sigma_{2}|$, if  $\supp \mathscr{F}u_{1}\subset D_{1}$ which yields that $1\leq |k|\leq C$,
by using Lemma 2.5, 2.7, 2.3, since $\frac{1}{6}+\epsilon\leq s\leq\frac{1}{2}-2\epsilon,$
 we have that
\begin{eqnarray*}
&&\left\|\Lambda^{-1}\partial_{x}(1-\partial_{x}^{2})^{-1}\prod_{j=1}^{2}(\partial_{x}u_{j})\right\|
_{Y^{s}}\leq C\left\|\Lambda^{-1}\partial_{x}(1-\partial_{x}^{2})^{-1}\prod_{j=1}^{2}(\partial_{x}u_{j})
\right\|_{Z^{s}}\nonumber\\&&
\leq C\left\|\partial_{x}(1-\partial_{x}^{2})^{-1}\prod_{j=1}^{2}(\partial_{x}u_{j})\right\|
_{X_{s,-\frac{1}{6}-\epsilon}}\nonumber\\&&\leq \left\|\langle k\rangle ^{s-1}\langle \sigma \rangle ^{-\frac{1}{6}-\epsilon}
\left[(|k|\mathscr{F}u_{1})*(|k|\mathscr{F}u_{2})\right]\right\|_{l_{k}^{2}L_{\tau}^{2}}\nonumber\\
&&\leq C\left\|\left(J^{s}\Lambda ^{\frac{5}{6}-\epsilon}u_{1}\right)
\left(J^{-s+\frac{1}{3}+2\epsilon}u_{2}\right)\right\|_{X_{0,-\frac{1}{6}-\epsilon}}\nonumber\\&&\leq  C
\|u_{1}\|_{X_{s,\frac{5}{6}-\epsilon}}\|u_{2}\|_{X_{s,\frac{1}{2}}}\leq
C\prod_{j=1}^{2}\|u_{j}\|_{Z^{s}};
\end{eqnarray*}
if $\supp u_{1}\subset D_{2},$ we  can assume that $\supp u_{2}\subset D_{2}$,
we can assume that  $|\sigma|\leq C|k_{1}|^{3},$
 by using the H\"older  inequality  and the  Young inequality,
  since $\frac{1}{6}+\epsilon\leq s\leq\frac{1}{2}-2\epsilon$,
 we  have that
\begin{eqnarray*}
&&\left\|\Lambda^{-1}\partial_{x}(1-\partial_{x}^{2})^{-1}
\prod_{j=1}^{2}(\partial_{x}u_{j})\right\|_{Y^{s}}\leq C\left\|\Lambda^{-1}\partial_{x}(1-\partial_{x}^{2})^{-1}
\prod_{j=1}^{2}(\partial_{x}u_{j})\right\|_{Z^{s}}\nonumber\\&&
\leq C\left\|\partial_{x}(1-\partial_{x}^{2})^{-1}\prod_{j=1}^{2}
(\partial_{x}u_{j})\right\|_{X_{s,-\frac{1}{6}-\epsilon}}
\nonumber\\&&\leq \left\|\langle k\rangle ^{s-1}
\langle \sigma \rangle ^{-\frac{1}{6}-\epsilon}
\left[(|k|\mathscr{F}u_{1})*(|k|\mathscr{F}u_{2})\right]
\right\|_{l_{k}^{2}L_{\tau}^{2}}\nonumber\\
&&\leq C\left\|\left(J^{1-s}\Lambda ^{s+\frac{1}{3}-\epsilon}u_{1}\right)
\left(J^{-s+\frac{1}{3}+2\epsilon}u_{2}\right)
\right\|_{X_{0,-\frac{1}{6}-\epsilon}}\nonumber\\
&&\leq C
\|u_{1}\|_{X_{1-s,s+\frac{1}{3}+\epsilon}}\|u_{2}\|_{X_{s,\frac{1}{2}}}\leq C\prod_{j=1}^{2}\|u_{j}\|_{Z^{s}}.
\end{eqnarray*}
(c) Case $|\sigma_{2}|=\left\{|\sigma|,|\sigma_{1}|,|\sigma_{2}|\right\}.$
 This case can be proved similarly to case (b).

\noindent (3) Region $\Omega_{3}$.
We  consider $|k|\leq |k_{1}|^{-2}$  and  $|k_{1}|^{-2}<|k|\leq 1,$
respectively.

\noindent
When $|k|\leq |k_{1}|^{-2}$, by using Lemma 2.5 and the Young inequality,
since $\frac{1}{6}+\epsilon\leq s\leq\frac{1}{2}-2\epsilon$, we have that
\begin{eqnarray*}
&&\left\|\Lambda^{-1}\partial_{x}(1-\partial_{x}^{2})^{-1}
\prod_{j=1}^{2}(\partial_{x}u_{j})\right\|_{Y^{s}}\leq C
\left\||k|\langle \sigma\rangle ^{-\frac{1}{6}-\epsilon}
\left[(|k|\mathscr{F}u_{1})*(|k|\mathscr{F}u_{2})\right]\right\|_{l_{k}^{2}L_{\tau}^{2}}\nonumber\\
&&\leq C\left\|\left[\mathscr{F}u_{1}*
\mathscr{F}u_{2}\right]
\right\|_{l_{k}^{\infty}L_{\tau}^{2}}\leq C\|u_{1}\|_{X_{0,0}}\|u_{2}\|_{Y^{0}}\leq C\prod_{j=1}^{2}\|u_{j}\|_{Z^{s}}.
\end{eqnarray*}
When  $|k_{1}|^{-2}<|k|\leq 1,$
 we consider (a)-(c) of Lemma 2.7, respectively.

\noindent When (a) occurs:  by using the H\"older inequality and the Young
inequality, since $\frac{1}{6}+\epsilon\leq s\leq\frac{1}{2}-2\epsilon$, we have that
\begin{eqnarray*}
&&\left\|\Lambda^{-1}\partial_{x}(1-\partial_{x}^{2})^{-1}
\prod_{j=1}^{2}(\partial_{x}u_{j})\right\|_{Y^{s}}\leq
 C\left\||k|\langle k\rangle^{s}\langle \sigma \rangle^{-1}\left[(|k|\mathscr{F}u_{1})*(|k|\mathscr{F}u_{2})\right]
\right\|_{l_{k}^{2}L_{\tau}^{1}}\nonumber\\
&&\leq C\left\|\left[\mathscr{F}u_{1}*\mathscr{F}u_{2}\right]
\right\|_{l_{k}^{\infty}L_{\tau}^{1}}\nonumber\\
&&\leq C\prod_{j=1}^{2}\|\mathscr{F}u_{j}\|_{l_{k}^{2}L_{\tau}^{1}}\leq C\prod_{j=1}^{2}\|u_{j}\|_{Y^{s}}
\leq C\prod_{j=1}^{2}\|u_{j}\|_{Z^{s}}.
\end{eqnarray*}
When  (b) occurs:  we consider  $|\sigma_{1}|>4{\rm max}\left\{|\sigma|,|\sigma_{2}|\right\}$  and
$|\sigma_{1}|\leq4{\rm max}\left\{|\sigma|,|\sigma_{2}|\right\}$, respectively.

\noindent
When $|\sigma_{1}|>4{\rm max}\left\{|\sigma|,|\sigma_{2}|\right\}$,
in this case, $\supp \mathscr{F}u_{1}\subset D_{1}$, by using
$X_{s,\frac{1}{2}+\epsilon}\hookrightarrow Y^{s}$
and the H\"older inequality and the Young inequality, since
$\frac{1}{6}+\epsilon\leq s\leq\frac{1}{2}-2\epsilon,$
 we have that
\begin{eqnarray*}
&&\left\|\Lambda^{-1}\partial_{x}(1-\partial_{x}^{2})^{-1}
\prod_{j=1}^{2}(\partial_{x}u_{j})\right\|_{Y^{s}}
\leq C\left\||k|\langle \sigma \rangle ^{-\frac{1}{2}+\epsilon}\left[(|k|\mathscr{F}u_{1})*(|k|\mathscr{F}u_{2})\right]
\right\|_{l_{k}^{2}L_{\tau}^{2}}\nonumber\\
&&\leq C\left\||k|^{\frac{1}{6}+\epsilon}\left(\langle k\rangle^{s}
\langle \sigma
\rangle ^{\frac{5}{6}-\epsilon}\mathscr{F}u_{1}\right)*\left
(\langle k\rangle ^{-s+\frac{1}{3}+2\epsilon}
\mathscr{F}u_{2}\right)\right\|_{l_{k}^{2}L_{\tau}^{2}}\nonumber\\
&&\leq C\left\|\left(\langle k\rangle^{s}\langle \sigma \rangle^{\frac{5}{6}-\epsilon}\mathscr{F}u_{1}\right)*\left(\langle k\rangle ^{-s+\frac{1}{3}+2\epsilon}\mathscr{F}u_{2}\right)\right\|_{
l_{k}^{\infty}L_{\tau}^{2}}\nonumber\\
&&\leq C\|u_{1}\|_{X_{s,\frac{5}{6}-\epsilon}}\|u_{2}\|_{Y^{s}}.
\end{eqnarray*}
When $|\sigma_{1}|\leq4{\rm max}\left\{|\sigma|,|\sigma_{2}|\right\}$, we have
that $|\sigma_{1}|\sim |\sigma|$ or $|\sigma_{1}|\sim |\sigma_{2}|.$

\noindent
When $|\sigma_{1}|\sim |\sigma|$, this case can be proved similarly to case
$|\sigma|={\rm max}\left\{|\sigma|,|\sigma_{1}|,|\sigma_{2}|\right\}.$
When $|\sigma_{1}|\sim |\sigma_{2}|$, if $\supp \mathscr{F}u_{j}\subset D_{1}$ with $j=1,2$, by using $X_{s,\frac{1}{2}+\epsilon}\hookrightarrow Y^{s}$ and the Young inequality, since $\frac{1}{6}+\epsilon\leq s\leq\frac{1}{2}-2\epsilon,$  we have that
\begin{eqnarray*}
&&\left\|\Lambda^{-1}\partial_{x}(1-\partial_{x}^{2})^{-1}
\prod_{j=1}^{2}(\partial_{x}u_{j})\right\|_{Y^{s}}\leq
 C\left\||k|\langle \sigma \rangle ^{-\frac{1}{2}+\epsilon}
 \left[(|k|\mathscr{F}u_{1})*(|k|\mathscr{F}u_{2})\right]
 \right\|_{l_{k}^{2}L_{\tau}^{2}}\nonumber\\
&&\leq C\left\||k|^{\frac{1}{6}+\epsilon}
\left(\langle k\rangle^{s}\langle \sigma
\rangle ^{\frac{5}{6}-\epsilon}\mathscr{F}u_{1}\right)
*\left(\langle k\rangle ^{-s+\frac{1}{3}+2\epsilon}
\mathscr{F}u_{2}\right)\right\|_{l_{k}^{2}L_{\tau}^{2}}\nonumber\\
&&\leq C\left\|\left(\langle k\rangle^{s}\langle \sigma \rangle^{\frac{5}{6}-\epsilon}\mathscr{F}u_{1}\right)
*\left(\langle k\rangle ^{-s+\frac{1}{3}+2\epsilon}\mathscr{F}u_{2}\right)
\right\|_{l_{k}^{\infty}L_{\tau}^{2}}\nonumber\\
&&\leq C\|u_{1}\|_{X_{s,\frac{5}{6}-\epsilon}}\|u_{2}\|_{Y^{s}}\nonumber\\
&&\leq C\prod_{j=1}^{2}\|u_{j}\|_{Z^{s}};
\end{eqnarray*}
if $\supp \mathscr{F}u_{j}\subset D_{2}$ with $j=1,2$, by using $X_{s,\frac{1}{2}+\epsilon}\hookrightarrow Y^{s}$,
Lemmas 2.7, 2.5, 2.3, since $\frac{1}{6}+\epsilon\leq s\leq\frac{1}{2}-2\epsilon$, we have that
\begin{eqnarray*}
&&\left\|\Lambda^{-1}\partial_{x}(1-\partial_{x}^{2})^{-1}
\prod_{j=1}^{2}(\partial_{x}u_{j})\right\|_{Y^{s}}\leq
 C\left\||k|\langle \sigma \rangle ^{-\frac{1}{2}+\epsilon}
 \left[(|k|\mathscr{F}u_{1})*(|k|\mathscr{F}u_{2})\right]
 \right\|_{l_{k}^{2}L_{\tau}^{2}}\nonumber\\
&&\leq C\left\|\left(\langle k\rangle^{1-s}\langle \sigma
\rangle ^{s+\frac{1}{3}-\epsilon}
\mathscr{F}u_{1}\right)*\left(\langle k\rangle ^{-s+\frac{1}{3}+2\epsilon}
\mathscr{F}u_{2}\right)
\right\|_{X^{0,-\frac{1}{2}+\epsilon}}\nonumber\\
&&\leq C\|u_{1}\|_{X_{1-s,s+\frac{1}{3}-\epsilon}}
\|u_{2}\|_{X_{s,\frac{1}{6}+\epsilon}}\nonumber\\
&&\leq C\|u_{1}\|_{X_{1-s,s+\frac{1}{3}-\epsilon}}
\|u_{2}\|_{X_{s,\frac{1}{6}+\epsilon}}\leq C\prod_{j=1}^{2}\|u_{j}\|_{Z^{s}};
\end{eqnarray*}
if $\supp \mathscr{F}u_{j}\subset D_{3}$ with $j=1,2$,  by using $X_{s,\frac{1}{2}+\epsilon}\hookrightarrow Y^{s}$ and Lemma 2.5,
since $\frac{1}{6}+\epsilon\leq s\leq \frac{1}{2}-2\epsilon,$ we have that
\begin{eqnarray*}
&&\left\|\Lambda^{-1}\partial_{x}(1-\partial_{x}^{2})^{-1}
\prod_{j=1}^{2}(\partial_{x}u_{j})\right\|_{Y^{s}}\leq
 C\left\|\Lambda^{-1}\partial_{x}(1-\partial_{x}^{2})^{-1}
 \prod_{j=1}^{2}(\partial_{x}u_{j})
 \right\|_{Z^{s}}\nonumber\\
&&\leq C\left\||k|\langle k\rangle ^{s-2}\langle
\sigma \rangle ^{-\frac{1}{2}+\epsilon}\left(|k|
\mathscr{F}u_{1}\right)*\left(|k|\mathscr{F}u_{2}\right)
\right\|_{l_{k}^{2}L_{\tau}^{2}}\nonumber\\
&&\leq C\left\|\left(\langle k\rangle^{1-s}\langle \sigma
\rangle ^{s}
\mathscr{F}u_{1}\right)*\left(\langle k\rangle ^{-2s+1}
\mathscr{F}u_{2}\right)
\right\|_{X_{0,-\frac{1}{2}+\epsilon}}\nonumber\\
&&\leq C\left\|u_{1}\right\|_{X_{1-s,s}}\left\|u_{2}
\right\|_{X_{1-2s,\frac{1}{6}+\epsilon}}\nonumber\\
&&\leq C\prod_{j=1}^{2}\left\|u_{j}\right\|_{X_{1-s,s}}
\leq C
\prod_{j=1}^{2}\|u_{j}\|_{Z^{s}}.
\end{eqnarray*}
When case (c) occurs: this case can be proved similarly to case (b).

\noindent (4) Region $\Omega_{4}$.
In this case, we consider (a)-(c) of Lemma 2.7, respectively.

\noindent When (a) occurs:
by using Lemma 2.3, since $\frac{1}{6}+\epsilon\leq s\leq \frac{1}{2}-2\epsilon,$ we have that
\begin{eqnarray*}
&&\left\|\Lambda^{-1}\partial_{x}(1-\partial_{x}^{2})^{-1}
\prod_{j=1}^{2}(\partial_{x}u_{j})\right\|_{Y^{s}}
\leq C\left\|\langle k\rangle^{s}\langle \sigma \rangle^{-1}\left[(|k|\mathscr{F}u_{1})*(\mathscr{F}u_{2})\right]
\right\|_{l_{k}^{2}L_{\tau}^{1}}\nonumber\\
&&\leq C\left\|\langle k\rangle ^{-2}\left[\mathscr{F}u_{1}*(\langle k\rangle ^{s}\mathscr{F}u_{2})\right]\right\|_{l_{k}^{2}L_{\tau}^{1}}\nonumber\\
&&\leq C\left\|\left[(\langle k\rangle ^{-2}\mathscr{F}u_{1})*(\langle k\rangle ^{s}\mathscr{F}u_{2})\right]\right\|_{l_{k}^{2}L_{\tau}^{1}}\nonumber\\
&&\leq C\|\langle k\rangle ^{-2}u_{1}\|_{l_{k}^{1}L_{\tau}^{1}}\|u_{2}\|_{Y^{s}}
\leq C\prod_{j=1}^{2}\|u_{j}\|_{Y^{s}}\leq C\prod_{j=1}^{2}\|u_{j}\|_{Z^{s}}.
\end{eqnarray*}
(b): $|\sigma_{1}|={\rm max}\left\{|\sigma|,|\sigma_{1}|,|\sigma_{2}|\right\}.$

\noindent We consider $|\sigma_{1}|>4{\rm max}\left\{|\sigma|,|\sigma_{2}|\right\}$ and
$|\sigma_{1}|\leq4{\rm max}\left\{|\sigma|,|\sigma_{2}|\right\}$, respectively.

\noindent If $|\sigma_{1}|>4{\rm max}\left\{|\sigma|,|\sigma_{2}|\right\}$,
 then $\supp \mathscr{F}u_{1}\subset D_{3}$ and
 $\supp \mathscr{F}u_{2}\subset D_{1}\cup D_{2}$,
 by using Lemma 2.3,
since $\frac{1}{6}+\epsilon\leq s\leq\frac{1}{2}-2\epsilon,$  we have that
\begin{eqnarray*}
&&\left\|\Lambda^{-1}\partial_{x}(1-\partial_{x}^{2})^{-1}
\prod_{j=1}^{2}(\partial_{x}u_{j})\right\|_{Y^{s}}\nonumber\\&&\leq C\left\|\Lambda^{-1}\partial_{x}(1-\partial_{x}^{2})^{-1}
\prod_{j=1}^{2}(\partial_{x}u_{j})\right\|_{Z^{s}}
\leq C\left\|\partial_{x}(1-\partial_{x}^{2})^{-1}\prod_{j=1}^{2}
(\partial_{x}u_{j})\right\|_{X_{s,-\frac{1}{6}-\epsilon}}\nonumber\\
&&\leq C\left\|\langle \sigma \rangle^{-\frac{1}{6}-\epsilon}
\left[(|k|\mathscr{F}u_{1})*(\langle k\rangle^{s}\mathscr{F}u_{2})\right]
\right\|_{l_{k}^{2}L_{\tau}^{2}}
\nonumber\\
&&\leq C\left\|(J^{1-s}\Lambda ^{s}u_{1})(J^{-s}u_{2})\right\|_{X_{0,-\frac{1}{6}-\epsilon}}
\nonumber\\&&\leq C\|u_{1}\|_{X_{1-s,s}}\|u_{2}\|_{X_{0,\frac{1}{2}}}
\leq C\prod_{j=1}^{2}\|u_{j}\|_{Z^{s}}.
\end{eqnarray*}
When $|\sigma_{1}|\leq4{\rm max}\left\{|\sigma|,|\sigma_{2}|\right\}$,
we have that $|\sigma_{1}|\sim |\sigma|$ or $|\sigma_{1}|\sim |\sigma_{2}|$.

\noindent
 When  $|\sigma_{1}|\sim |\sigma|$, this case can be proved similarly
 to case $|\sigma|={\rm max}\left\{|\sigma|,|\sigma_{1}|,|\sigma_{2}|\right\}.$

\noindent When $|\sigma_{1}|\sim |\sigma_{2}|$, we have $\supp \mathscr{F}u_{1}\subset D_{3}$
and $\supp \mathscr{F}u_{2}\subset D_{2}\bigcup D_{3}.$

\noindent When
 $\supp \mathscr{F}u_{2}\subset D_{2}$,
 by using Lemmas 2.5, 2.3, since $\frac{1}{6}+\epsilon\leq s\leq\frac{1}{2}-2\epsilon,$  we have that
\begin{eqnarray*}
&&\left\|\Lambda^{-1}\partial_{x}(1-\partial_{x}^{2})^{-1}\prod_{j=1}^{2}
(\partial_{x}u_{j})\right\|_{Y^{s}}\nonumber\\&&\leq C
\left\|\Lambda^{-1}\partial_{x}(1-\partial_{x}^{2})^{-1}
\prod_{j=1}^{2}(\partial_{x}u_{j})\right\|_{Z^{s}}
\leq C\left\|\partial_{x}(1-\partial_{x}^{2})^{-1}\prod_{j=1}^{2}
(\partial_{x}u_{j})\right\|_{X_{s,-\frac{1}{6}-\epsilon}}\nonumber\\&&
\leq C\left\|\langle k\rangle^{s}\langle \sigma \rangle^{-\frac{1}{6}-\epsilon}
\left[(|k|\mathscr{F}u_{1})*\mathscr{F}u_{2}\right]\right\|_{l_{k}^{2}L_{\tau}^{2}} \nonumber\\
&&\leq C\left\|\langle \sigma \rangle^{-\frac{1}{6}-\epsilon}\left(\langle k\rangle ^{-\frac{3}{2}+3\epsilon}\mathscr{F}u_{1}
\right)*(\langle k\rangle ^{s}
\langle \sigma\rangle ^{\frac{5}{6}-\epsilon}\mathscr{F}u_{2})
\right\|_{l_{k}^{2}l_{\tau}^{2}}\nonumber\\&&\leq C
\|u_{1}\|_{X_{1-s,s}}\|u_{2}\|_{X_{s,\frac{1}{2}}}\leq C\prod_{j=1}^{2}\|u_{j}\|_{Z^{s}}.
\end{eqnarray*}
When $\supp \mathscr{F}u_{2}\subset D_{3}$,
by using $X_{s,\frac{1}{2}+\epsilon}\hookrightarrow Y^{s}$,
since $\frac{1}{6}+\epsilon\leq s\leq\frac{1}{2}-2\epsilon$,
we have that
\begin{eqnarray*}
&&\left\|\Lambda^{-1}(1-\partial_{x}^{2})^{-1}
\prod_{j=1}^{2}(\partial_{x}u_{j})
\right\|_{Y^{s}}\leq C\left\|\langle k\rangle ^{s}\langle
\sigma\rangle ^{-\frac{1}{2}+\epsilon}
\left[|k|\mathscr{F}u_{1}*\mathscr{F}u_{2}\right]\right
\|_{l_{k}^{2}L_{\tau}^{2}}\nonumber\\
&&\leq C\left\|\langle k\rangle ^{s+\frac{1}{2}+\epsilon}\langle \sigma \rangle^{2\epsilon}
\left[(|k|\mathscr{F}u_{1})*\mathscr{F}u_{2}\right]
\right\|_{l_{k}^{\infty}L_{\tau}^{\infty}}\nonumber\\
&&\leq C\left\|\left(\langle k\rangle ^{s+\frac{3}{2}+7\epsilon}\mathscr{F}u_{1}
\right)*\mathscr{F}u_{2}\right\|_{l_{k}^{\infty}l_{\tau}^{\infty}}\leq C
\left\|\langle k\rangle ^{-3s-\frac{1}{2}+7\epsilon}\right\|_{l_{k}^{\infty}}
\prod_{j=1}^{2}\|u_{j}\|_{X_{1-s,s}}\nonumber\\&&\leq C
\prod_{j=1}^{2}\|u_{j}\|_{X_{1-s,s}}\leq C\prod_{j=1}^{2}\|u_{j}\|_{Z^{s}}.
\end{eqnarray*}
When case (c) occurs:  by using Lemma 2.5, we have that
\begin{eqnarray*}
&&\left\|\Lambda^{-1}(1-\partial_{x}^{2})^{-1}\partial_{x}
\prod_{j=1}^{2}(\partial_{x}u_{j})
\right\|_{Y^{s}}\leq C\left\|\langle k\rangle ^{s}\langle
\sigma\rangle ^{-\frac{1}{6}-\epsilon}
\left[|k|\mathscr{F}u_{1}*\mathscr{F}u_{2}\right]\right
\|_{l_{k}^{2}L_{\tau}^{2}}.
\end{eqnarray*}
By using a proof similar to case (c)  of region $\Omega_{4}$ of   Lemma 3.1, we can obtain that
\begin{eqnarray*}
&&\left\|(1-\partial_{x}^{2})^{-1}\partial_{x}
\prod_{j=1}^{2}(\partial_{x}u_{j})
\right\|_{Y^{s}}\leq C\prod_{j=1}^{2}\|u_{j}\|_{Z^{s}}.
\end{eqnarray*}
(5) In region $\Omega_{5}$.
 In this region, we consider the case $|k_{1}|\leq |k|^{-2}$
 and $|k|^{-2}\leq |k_{1}|\leq 1,$
 respectively.

 \noindent
When $|k_{1}|\leq |k|^{-2}$, by using Lemma 2.5, the Young inequality
and Cauchy-Schwartz inequality, since
$\frac{1}{6}+\epsilon\leq s\leq\frac{1}{2}-2\epsilon,$ we have that
\begin{eqnarray*}
&&\left\|\Lambda^{-1}\partial_{x}(1-\partial_{x}^{2})^{-1}\prod_{j=1}^{2}
(\partial_{x}u_{j})\right\|_{Y^{s}}\leq C\left\|\langle k\rangle^{s}\langle \sigma\rangle ^{-\frac{1}{6}-\epsilon}
\left[(|k|\mathscr{F}u_{1})*\mathscr{F}u_{2}\right]\right\|_{l_{k}^{2}L_{\tau}^{2}}\nonumber\\
&&\leq C\left\|\langle k\rangle ^{-2}\left[\mathscr{F}u_{1}*\langle k\rangle^{s}
\mathscr{F}u_{2}\right]\right\|_{l_{k}^{2}L_{\tau}^{2}}\nonumber\\
&&\leq C\left\|\mathscr{F}u_{1}\right\|_{l_{k}^{1}l_{\tau}^{2}}\|u_{2}\|_{Y^{s}}
\leq C\|\mathscr{F}u_{1}\|_{l_{k}^{2}L_{\tau}^{2}}\|u_{2}\|_{Y^{s}}
\leq C\prod_{j=1}^{2}\|u_{j}\|_{Z^{s}}.
\end{eqnarray*}
When  $|k|^{-2}\leq |k_{1}|\leq 1.$
\noindent In this  case, we consider (a)-(c) of Lemma 2.7, since
$\frac{1}{6}+\epsilon\leq s\leq\frac{1}{2}-2\epsilon,$ respectively.

\noindent
When (a) occurs:
 by using Lemma 2.7,  the Young inequality and Cauchy-Schwartz inequality, since
$\frac{1}{6}+\epsilon\leq s\leq\frac{1}{2}-2\epsilon,$
we have that
\begin{eqnarray*}
&&\left\|\Lambda^{-1}\partial_{x}(1-\partial_{x}^{2})^{-1}\prod_{j=1}^{2}
(\partial_{x}u_{j})\right\|_{Y^{s}}\leq C\left\|\langle k\rangle^{s+1}\langle \sigma\rangle ^{-1}
\left[(|k|\mathscr{F}u_{1})*\mathscr{F}u_{2}\right]\right\|_{l_{k}^{2}L_{\tau}^{1}}\nonumber\\
&&\leq C\left\|\left[\mathscr{F}u_{1}*(\langle k\rangle ^{s-1}\mathscr{F}u_{2})\right]
\right\|_{l_{k}^{2}L_{\tau}^{1}}\nonumber\\
&&\leq C\left\|\mathscr{F}u_{1}\right\|_{l_{k}^{1}l_{\tau}^{1}}\|u_{2}\|_{Y^{s}}
\leq C\|\mathscr{F}u_{1}\|_{l_{k}^{2}L_{\tau}^{2}}\|u_{2}\|_{Y^{s}}
\leq C\prod_{j=1}^{2}\|u_{j}\|_{Z^{s}}.
\end{eqnarray*}
When (b) occurs, in this case
$\supp \mathscr{F}u_{1} \subset D_{4}.$
In this case, we consider case $|\sigma_{1}|> 4{\rm max}\left\{|\sigma|, |\sigma_{2}|\right\}$
and $|\sigma_{1}|\leq4{\rm max}\left\{|\sigma|, |\sigma_{2}|\right\},$ respectively.

\noindent When consider case $|\sigma_{1}|> 4{\rm max}\left\{|\sigma|, |\sigma_{2}|\right\}$,
by using the Young inequality and Cauchy-Schwartz inequality, since
$\frac{1}{6}+\epsilon\leq s\leq\frac{1}{2}-2\epsilon,$ we have that
\begin{eqnarray*}
&&\left\|\Lambda^{-1}\partial_{x}(1-\partial_{x}^{2})^{-1}
\prod_{j=1}^{2}(\partial_{x}u_{j})\right\|_{Y^{s}}\leq
C\left\|\langle k\rangle ^{s}\langle \sigma\rangle ^{-\frac{1}{6}-\epsilon}
\left[|k|\mathscr{F}u_{1}*\mathscr{F}u_{2}\right]\right\|_{l_{k}^{2}L_{\tau}^{2}}\nonumber\\
&&\leq C\left\|\langle \sigma\rangle ^{-\frac{1}{6}-\epsilon}
\left[(\langle k\rangle^{1-s}
\langle \sigma \rangle ^{s}\mathscr{F}u_{1})*(\langle k\rangle^{-2s}\mathscr{F}u_{2})\right]
\right\|_{l_{k}^{2}L_{\tau}^{2}}\nonumber\\
&&\leq C\|u_{1}\|_{X_{1-s,s}}\|u_{2}\|_{X_{s,\frac{1}{2}}}\nonumber\\&&
\leq C\|u_{1}\|_{X_{1-s,s}}\|u_{2}\|_{X_{s,1/2}}
\leq C\prod_{j=1}^{2}\|u_{j}\|_{Z^{s}}.
\end{eqnarray*}
 When $|\sigma_{1}|\leq4{\rm max}\left\{|\sigma|, |\sigma_{2}|\right\},$
we have $|\sigma_{1}|\sim |\sigma|$ or $|\sigma_{1}|\sim |\sigma_{2}|$.

\noindent Case $|\sigma_{1}|\sim |\sigma|$ can be proved similarly to
case $|\sigma|={\rm max}\left\{|\sigma|, |\sigma_{1}|,|\sigma_{2}|\right\}.$

\noindent When $|\sigma_{1}|\sim |\sigma_{2}|$, we consider case
$\supp \mathscr{F}u_{2}\subset D_{1}$, $\supp \mathscr{F}u_{2}\subset D_{2}$
and $\supp \mathscr{F}u_{2}\subset D_{3},$ respectively.

\noindent When $\supp \mathscr{F}u_{2}\subset D_{1}$,
by using  $X_{s,\frac{1}{2}+\epsilon}\hookrightarrow Y^{s},$ the Young inequality and Cauchy-Schwartz inequality, since
$\frac{1}{6}+\epsilon\leq s\leq\frac{1}{2}-2\epsilon,$ we have that
\begin{eqnarray*}
&&\left\|\Lambda^{-1}\partial_{x}(1-\partial_{x}^{2})^{-1}
\prod_{j=1}^{2}(\partial_{x}u_{j})\right\|_{Y^{s}}\leq
C\left\|\langle k\rangle ^{s}\langle \sigma\rangle ^{-\frac{1}{2}+\epsilon}
\left[(|k|\mathscr{F}u_{1})*\mathscr{F}u_{2}\right]\right\|_{l_{k}^{2}L_{\tau}^{2}}\nonumber\\
&&\leq C\left\|\langle \sigma\rangle ^{-\frac{1}{2}+\epsilon}
\left[(\mathscr{F}u_{1})*(\langle k\rangle^{s}
\langle \sigma \rangle ^{\frac{5}{6}-\epsilon}\mathscr{F}u_{2})\right]
\right\|_{l_{k}^{2}L_{\tau}^{2}}\nonumber\\
&&\leq C\|u_{1}\|_{X_{0,\frac{1}{6}+\epsilon}}\|u_{2}\|_{X_{s,\frac{5}{6}}}\nonumber\\&&
\leq C\|u_{1}\|_{X_{s,\frac{1}{6}+\epsilon}}\|u_{2}\|_{X_{s,\frac{5}{6}}}
\leq C\prod_{j=1}^{2}\|u_{j}\|_{Z^{s}}.
\end{eqnarray*}
When $\supp \mathscr{F}u_{2}\subset D_{2}$,
by using  $X_{s,\frac{1}{2}+\epsilon}\hookrightarrow Y^{s},$ the Young inequality and Cauchy-Schwartz inequality, since
$\frac{1}{6}+\epsilon\leq s\leq\frac{1}{2}-2\epsilon,$ we have that
\begin{eqnarray*}
&&\left\|\Lambda^{-1}\partial_{x}(1-\partial_{x}^{2})^{-1}
\prod_{j=1}^{2}(\partial_{x}u_{j})\right\|_{Y^{s}}\leq
C\left\|\langle k\rangle ^{s}\langle \sigma\rangle ^{-\frac{1}{2}+\epsilon}
\left[(|k|\mathscr{F}u_{1})*\mathscr{F}u_{2}\right]\right\|_{l_{k}^{2}L_{\tau}^{2}}\nonumber\\
&&\leq C\left\|\langle \sigma\rangle ^{-\frac{1}{2}+\epsilon}
\left[(\mathscr{F}u_{1})*(\langle k\rangle^{1-s}
\langle \sigma \rangle ^{s+\frac{1}{3}-\epsilon}\mathscr{F}u_{2})\right]
\right\|_{l_{k}^{2}L_{\tau}^{2}}\nonumber\\
&&\leq C\|u_{1}\|_{X_{0,\frac{1}{6}+\epsilon}}\|u_{2}\|_{X_{1-s,s+\frac{1}{3}-\epsilon}}\nonumber\\&&
\leq C\|u_{1}\|_{X_{1-s,s}}\|u_{2}\|_{X_{1-s,s+\frac{1}{3}-\epsilon}}
\leq C\prod_{j=1}^{2}\|u_{j}\|_{Z^{s}}.
\end{eqnarray*}
When $\supp \mathscr{F}u_{2}\subset D_{3}$,
by using  $X_{s,\frac{1}{2}+\epsilon}\hookrightarrow Y^{s},$ the Young inequality and Cauchy-Schwartz inequality, since
$\frac{1}{6}+\epsilon\leq s\leq\frac{1}{2}-2\epsilon,$ we have that
\begin{eqnarray*}
&&\left\|\Lambda^{-1}\partial_{x}(1-\partial_{x}^{2})^{-1}
\prod_{j=1}^{2}(\partial_{x}u_{j})\right\|_{Y^{s}}\leq
C\left\|\langle k\rangle ^{s}\langle \sigma\rangle ^{-\frac{1}{2}+\epsilon}
\left[(|k|\mathscr{F}u_{1})*\mathscr{F}u_{2}\right]\right\|_{l_{k}^{2}L_{\tau}^{2}}\nonumber\\
&&\leq C\left\|\langle \sigma\rangle ^{-\frac{1}{2}+\epsilon}
\left[(\mathscr{F}u_{1})*(\langle k\rangle^{1-s}
\langle \sigma \rangle ^{s}\mathscr{F}u_{2})\right]
\right\|_{l_{k}^{2}L_{\tau}^{2}}\nonumber\\
&&\leq C\|u_{1}\|_{X_{0,\frac{1}{6}+\epsilon}}\|u_{2}\|_{X_{1-s,s}}\nonumber\\&&
\leq C\|u_{1}\|_{X_{1-s,s}}\|u_{2}\|_{X_{s,\frac{5}{6}}}
\leq C\prod_{j=1}^{2}\|u_{j}\|_{Z^{s}}.
\end{eqnarray*}
When (c) occurs: we consider case $|\sigma_{2}|> 4{\rm max}\left\{|\sigma|, |\sigma_{1}|\right\}$
and $|\sigma_{2}|\leq4{\rm max}\left\{|\sigma|, |\sigma_{1}|\right\},$ respectively.

\noindent When consider case $|\sigma_{2}|> 4{\rm max}\left\{|\sigma|, |\sigma_{1}|\right\}$,
by using $X_{s,\frac{1}{2}+\epsilon}\hookrightarrow Y^{s},$ the Young inequality and Cauchy-Schwartz inequality, since
$\frac{1}{6}+\epsilon\leq s\leq\frac{1}{2}-2\epsilon,$ we have that
\begin{eqnarray*}
&&\left\|\Lambda^{-1}\partial_{x}(1-\partial_{x}^{2})^{-1}
\prod_{j=1}^{2}(\partial_{x}u_{j})\right\|_{Y^{s}}\leq
C\left\|\langle k\rangle ^{s}\langle \sigma\rangle ^{-\frac{1}{2}+\epsilon}
\left[(|k|\mathscr{F}u_{1})*\mathscr{F}u_{2}\right]\right\|_{l_{k}^{2}L_{\tau}^{2}}\nonumber\\
&&\leq C\left\|\langle \sigma\rangle ^{-\frac{1}{2}+\epsilon}
\left[(\langle k\rangle^{2\epsilon-\frac{5}{3}+s}
\mathscr{F}u_{1})*(\langle k\rangle^{s}\langle \sigma \rangle ^{\frac{5}{6}-\epsilon}\mathscr{F}u_{2})\right]
\right\|_{l_{k}^{2}L_{\tau}^{2}}\nonumber\\
&&\leq C\|u_{1}\|_{X_{s,\frac{1}{6}+\epsilon}}\|u_{2}\|_{X_{s,\frac{5}{6}-\epsilon}}\nonumber\\&&
\leq C\|u_{1}\|_{X_{s,\frac{1}{6}+\epsilon}}\|u_{2}\|_{X_{s,\frac{5}{6}-\epsilon}}
\leq C\prod_{j=1}^{2}\|u_{j}\|_{Z^{s}}.
\end{eqnarray*}
When $|\sigma_{2}|\leq4{\rm max}\left\{|\sigma|, |\sigma_{1}|\right\},$
we have $|\sigma_{2}|\sim |\sigma|$ or $|\sigma_{2}|\sim |\sigma_{1}|$.

\noindent Case $|\sigma_{1}|\sim |\sigma|$ can be proved similarly to
case $|\sigma|={\rm max}\left\{|\sigma|, |\sigma_{1}|,|\sigma_{2}|\right\}.$

\noindent When $|\sigma_{2}|\sim |\sigma_{1}|$, this case can be proved similarly to
case $|\sigma_{1}|\sim |\sigma_{2}|$.

\noindent
(6)In region $\Omega_{6}$.
This case can be proved similarly to $\Omega_{4}$.

\noindent(7)In region $\Omega_{7}$.
This case can be proved similarly to $\Omega_{7}$.

\noindent (8)In region $\Omega_{8}$. In this  case, we consider (a)-(c) of Lemma 2.7, respectively.

\noindent When (a) occurs:
  by using Lemma 2.7  and  the Young inequality, since
$\frac{1}{6}+\epsilon\leq s\leq\frac{1}{2}-2\epsilon,$ we have that
\begin{eqnarray*}
&&\left\|\Lambda^{-1}\partial_{x}(1-\partial_{x}^{2})^{-1}
\prod_{j=1}^{2}(\partial_{x}u_{j})\right\|_{Y^{s}}\leq C
\left\|\langle k\rangle ^{s+1}\langle \sigma \rangle ^{-1}
\left(\mathscr{F}u_{1}*\mathscr{F}u_{2}\right) \right\|_{l_{k}^{2}L_{\tau}^{1}}
\nonumber\\&&\leq C\left\|\langle k\rangle ^{s-2}
(\mathscr{F}u_{1}*\mathscr{F}u_{2})\right\|_{l_{k}^{2}L_{\tau}^{1}}\nonumber\\&&
\leq C\left\|\langle k\rangle ^{s}\mathscr{F}u_{1}\right\|_{l_{k}^{2}L_{\tau}^{1}}
\|\langle k\rangle ^{-2}\mathscr{F}u_{2}\|_{l_{k}^{1}L_{\tau}^{1}}\leq
C\|u_{1}\|_{Y^{s}}\|u_{2}\|_{Y^{s}}\leq C\prod_{j=1}^{2}\|u_{j}\|_{Z^{s}}.
\end{eqnarray*}
When (b)  occurs:
if
 $\supp\mathscr{F}u_{1}\subset D_{2}$,   by using $X_{s,\frac{1}{2}+\epsilon}\hookrightarrow Y^{s}$ and Lemmas 2.5, 2.3, since
$\frac{1}{6}+\epsilon\leq s\leq\frac{1}{2}-2\epsilon,$ we have that
\begin{eqnarray*}
&&\left\|\Lambda^{-1}\partial_{x}(1-\partial_{x}^{2})^{-1}
\prod_{j=1}^{2}(\partial_{x}u_{j})\right\|_{Y^{s}}\nonumber\\
&&\leq C\left\|(J^{1-s}\Lambda ^{s}u_{1})(J^{s}u_{2})\right\|_{X_{0,-\frac{1}{2}+\epsilon}}
\leq C\|u_{1}\|_{X_{1-s,s}}\|u_{2}\|_{X_{s,\frac{1}{6}+\epsilon}}\leq
C\prod_{j=1}^{2}\|u_{j}\|_{Z^{s}};
\end{eqnarray*}
if
 $\supp\mathscr{F}u_{1}\subset D_{3}$,
 by using $X_{s,\frac{1}{2}+\epsilon}\hookrightarrow Y^{s}$ and Lemmas 2.5, 2.3, since
$\frac{1}{6}+\epsilon\leq s\leq\frac{1}{2}-2\epsilon,$ we have that
\begin{eqnarray*}
&&\left\|\Lambda^{-1}\partial_{x}(1-\partial_{x}^{2})^{-1}
\prod_{j=1}^{2}(\partial_{x}u_{j})\right\|_{Y^{s}}\nonumber\\
&&\leq C\left\|(J^{1-s}\Lambda ^{s}u_{1})(J^{-s}u_{2})\right\|_{X_{0,-\frac{1}{2}+\epsilon}}
\leq C\|u_{1}\|_{X_{1-s,s}}\|u_{2}\|_{X_{s,\frac{1}{6}+\epsilon}}\leq
C\prod_{j=1}^{2}\|u_{j}\|_{Z^{s}}.
\end{eqnarray*}
 Case (c)  can be proved similarly to Case (b).

We have completed the proof of Lemma 3.2.

\noindent {\bf Remark 4.}  In the process of proving Lemma 3.2, the cases $ \Omega_{3}$ is  the most difficult to deal with and cases $ \Omega_{j}$ with $j=2,3$
require restriction $s>\frac{1}{6},$  more precisely,  case (b) of region $\Omega_{2}$ determines that $s\geq \frac{1}{6}+\epsilon$ is necessary.

\begin{Lemma}\label{Lemma3.3}
Let  $\frac{1}{6}+\epsilon\leq s\leq\frac{1}{2}-2\epsilon$,
$0<\epsilon\ll1$. Then, we have that
\begin{eqnarray}
      \left\|\Lambda^{-1}\partial_{x}(1-\partial_{x}^{2})^{-1}
      \prod_{j=1}^{2}(\partial_{x}u_{j})
      \right\|_{Z^{s}}
      \leq C\prod\limits_{j=1}^{2}\|u_{j}\|_{Z^{s}}.
        \label{3.03}
\end{eqnarray}
\end{Lemma}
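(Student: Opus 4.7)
The plan is very short: Lemma 3.3 is essentially the sum of the two preceding lemmas, so I would prove it by direct combination rather than by revisiting any case analysis.

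First, I would recall the structure of the target norm. By definition,
\begin{eqnarray*}
\|u\|_{Z^{s}}=\|P_{D_{1}\cup D_{5}}u\|_{X_{s,\frac{5}{6}-\epsilon}}
+\|P_{D_{2}}u\|_{X_{1-s,s+\frac{1}{3}-\epsilon}}+
\|P_{D_{3}\cup D_{4}}u\|_{X_{1-s,s}}+\left\|u\right\|_{Y^{s}},
\end{eqnarray*}
and the first three summands together constitute precisely the norm denoted $\|\cdot\|_{X^{s}}$ in Lemma 3.1. Consequently we have the identity $\|u\|_{Z^{s}}=\|u\|_{X^{s}}+\|u\|_{Y^{s}}$, so it suffices to control the image of the bilinear operator in $X^{s}$ and in $Y^{s}$ separately.

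Second, I would invoke Lemma 3.1 to obtain
\begin{eqnarray*}
\left\|\Lambda^{-1}\partial_{x}(1-\partial_{x}^{2})^{-1}\prod_{j=1}^{2}
(\partial_{x}u_{j})\right\|_{X^{s}}\leq C\prod_{j=1}^{2}\|u_{j}\|_{Z^{s}},
\end{eqnarray*}
and Lemma 3.2 to obtain
\begin{eqnarray*}
\left\|\Lambda^{-1}\partial_{x}(1-\partial_{x}^{2})^{-1}\prod_{j=1}^{2}
(\partial_{x}u_{j})\right\|_{Y^{s}}\leq C\prod_{j=1}^{2}\|u_{j}\|_{Z^{s}},
\end{eqnarray*}
both of which are valid throughout the range $\frac{1}{6}+\epsilon\leq s\leq \frac{1}{2}-2\epsilon$. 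Adding these two inequalities yields the claimed bound in $Z^{s}$.

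There is no genuine obstacle here, since the entire case analysis over the regions $\Omega_{1},\ldots,\Omega_{8}$ and the subcases coming from Lemma 2.7 has already been carried out in the proofs of Lemmas 3.1 and 3.2. The only thing to verify is that the bilinear estimates in those two lemmas share the same hypotheses on $s$ and the same right-hand side $\prod_{j=1}^{2}\|u_{j}\|_{Z^{s}}$, which they do. The conclusion then follows at once from the triangle inequality applied to the decomposition $\|\cdot\|_{Z^{s}}=\|\cdot\|_{X^{s}}+\|\cdot\|_{Y^{s}}$.
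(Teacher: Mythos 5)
Your proposal is correct and coincides with the paper's own argument: the authors likewise prove Lemma 3.3 by simply combining the definition of the $Z^{s}$ norm (whose non-$Y^{s}$ part is exactly the $X^{s}$ norm of Lemma 3.1) with Lemmas 3.1 and 3.2. Nothing further is needed.
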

{\bf Proof.} Combining the definition of $Z^{s}$ with Lemmas 3.1, 3.2, we have Lemma 3.3.

We have completed the proof of Lemma 3.3.

By using  a proof similar to Lemma 3.3,  we have Lemmas 3.4, 3.5.

\begin{Lemma}\label{Lemma3.4}
Let  $\frac{1}{6}+\epsilon\leq s\leq\frac{1}{2}-2\epsilon$,
$0<\epsilon\ll1$. Then, we have that
\begin{eqnarray}
      \left\|\Lambda^{-1}\partial_{x}\prod_{j=1}^{2}(u_{j})
      \right\|_{Z^{s}}
      \leq C\prod\limits_{j=1}^{2}\|u_{j}\|_{Z^{s}}.
        \label{3.04}
\end{eqnarray}
\end{Lemma}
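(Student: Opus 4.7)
The plan is to mirror the case analysis of Lemmas 3.1 and 3.2, replacing the Lemma~3.1 multiplier $|k||k_{1}||k_{2}|\langle k\rangle^{-2}\langle\sigma\rangle^{-1}$ by the simpler weight $|k|\langle\sigma\rangle^{-1}$ arising from $\Lambda^{-1}\partial_{x}(u_{1}u_{2})$. First I would decompose the Fourier support $(\R\times\dot{Z}_{\lambda})^{2}$ into the same eight regions $\Omega_{1},\dots,\Omega_{8}$ used in Lemma~3.1, and within each region apply Lemma~2.7 to split into the three subcases (a) $|\sigma|$ dominant, (b) $|\sigma_{1}|$ dominant, (c) $|\sigma_{2}|$ dominant.

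Next, the crucial elementary observation is that $|k|=|k_{1}+k_{2}|\leq 2\max(|k_{1}|,|k_{2}|)$, so the derivative $\partial_{x}$ can always be transferred onto the higher-frequency factor. Combined with the resonance inequality $\max\{|\sigma|,|\sigma_{1}|,|\sigma_{2}|\}\geq |kk_{1}k_{2}|$ from Lemma~2.7, the Strichartz-type bound of Lemma~2.3, and the embeddings furnished by Lemma~2.5, the estimate in each of the regions $\Omega_{1}$, $\Omega_{4}$--$\Omega_{8}$ reduces to a direct adaptation of the corresponding subcase of Lemma~3.1. In particular, the high$\times$high$\to$low interaction $\Omega_{2}$ is in fact easier here than in Lemma~3.1 because our multiplier is of size $|k|$ with $|k|\ll|k_{1}|\sim|k_{2}|$, while Lemma~3.1 had to absorb the much larger weight $|k_{1}|^{2}/|k|$ in the same regime.

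The main obstacle, as flagged in Remark~3, is again region $\Omega_{3}$, namely the configuration $|k_{1}|\sim|k_{2}|\gg|k|$ with $\lambda^{-1}\leq|k|\leq 1$. Here the small output frequency prevents a trivial gain and forces a careful subdivision according to which of the Bourgain regions $D_{1},D_{2},D_{3}$ contains the support of each $\mathscr{F}u_{j}$, exactly as in the treatment of $\Omega_{3}$ in Lemma~3.1. Case (b) of $\Omega_{2}$ together with the analogous treatment in $\Omega_{3}$ will once more dictate the lower threshold $s\geq\frac{1}{6}+\epsilon$; all resulting estimates are instances of the Strichartz bounds already established in Lemma~2.3 and the $Y^{s}$-type estimates invoked in Lemma~3.2.

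Finally, to control the $Y^{s}$ piece of the $Z^{s}$-norm I would repeat the strategy of Lemma~3.2, invoking the embedding $X_{s,\frac{1}{2}+\epsilon}\hookrightarrow Y^{s}$ together with Young's and the Cauchy--Schwarz inequalities to reduce matters to the $X_{s,b}$-bounds already handled above. Summing the $X^{s}$ and $Y^{s}$ estimates and using the definition of $Z^{s}$ yields the claimed inequality, with no genuinely new analytic input beyond that of Lemmas 3.1--3.3.
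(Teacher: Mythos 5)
Your proposal is correct and follows exactly the route the paper takes: the paper gives no separate argument for Lemma 3.4, stating only that it follows ``by a proof similar to Lemma 3.3,'' i.e.\ by rerunning the $\Omega_{1},\dots,\Omega_{8}$ decomposition of Lemmas 3.1--3.2 with the multiplier $|k|\langle\sigma\rangle^{-1}$ in place of $|k||k_{1}||k_{2}|\langle k\rangle^{-2}\langle\sigma\rangle^{-1}$, using Lemmas 2.3, 2.5 and 2.7 in each subcase. Your sketch is in fact more explicit than the paper's one-line proof, and your observations (the derivative transfers to the high frequency, $\Omega_{2}$ is dominated by the Lemma 3.1 case since $\langle k\rangle^{2}\lesssim|k_{1}||k_{2}|$ there, and $\Omega_{3}$ remains the delicate region) are consistent with it.
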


\begin{Lemma}\label{Lemma3.5}
Let  $\frac{1}{6}+\epsilon\leq s\leq\frac{1}{2}-2\epsilon$,
$0<\epsilon\ll1$. Then, we have
\begin{eqnarray}
      \left\|\Lambda^{-1}\partial_{x}(1-\partial_{x}^{2})^{-1}\prod_{j=1}^{2}(u_{j})
      \right\|_{Z^{s}}
      \leq C\prod\limits_{j=1}^{2}\|u_{j}\|_{Z^{s}}.
        \label{3.05}
\end{eqnarray}
\end{Lemma}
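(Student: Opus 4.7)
The plan is to mirror the region-by-region argument used for Lemma 3.3, i.e.\ to decompose $(\R\times\dot{Z}_{\lambda})^{2}\subset\bigcup_{j=1}^{8}\Omega_{j}$ exactly as in the proof of Lemma 3.1, and within each $\Omega_{j}$ to split into the three subcases (a), (b), (c) of Lemma 2.7 according to whether the resonance maximum is realized by $|\sigma|$, $|\sigma_{1}|$, or $|\sigma_{2}|$. The crucial observation is that the bilinear operator here, $\Lambda^{-1}\partial_{x}(1-\partial_{x}^{2})^{-1}(u_{1}u_{2})$, carries the Fourier multiplier $|k|\langle k\rangle^{-2}\langle\sigma\rangle^{-1}$, which is smaller by a factor of $|k_{1}||k_{2}|$ than the multiplier $|k|\langle k\rangle^{-2}\langle\sigma\rangle^{-1}|k_{1}||k_{2}|$ controlled in Lemma 3.3. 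Hence each subcase enjoys two extra inverse derivatives compared with Lemma 3.3.

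Concretely I would proceed as follows. In $\Omega_{1}$ the Cauchy--Schwartz/Young bound used at the start of Lemma 3.1 still applies because the symbol is bounded and $|k|$ is summable in $\ell^{2}_{k}$ on the unit ball. In $\Omega_{2},\Omega_{4},\Omega_{6},\Omega_{8}$ (high-frequency interactions), the bilinear $L^{2}$ estimate of Lemma 2.3 together with the resonance identity from Lemma 2.7 reduces matters to an application of the embedding $X_{s,\frac{5}{6}-\epsilon}\hookrightarrow Z^{s}$ from Lemma 2.5 and the $X_{1-s,s}$, $X_{1-s,s+\frac{1}{3}-\epsilon}$ components of the $Z^{s}$ norm; the two derivatives we are free to redistribute are absorbed by the weights $\langle k_{j}\rangle^{1-s}$ built into those components. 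The mixed regions $\Omega_{3},\Omega_{5},\Omega_{7}$ (containing frequencies with $\frac{1}{\lambda}\leq|k|\leq 1$ on one side) are handled as in the proof of Lemma 3.1, by further splitting into $|k|\leq|k_{1}|^{-2}$ versus $|k_{1}|^{-2}<|k|\leq 1$ (or the analogous split exchanging $k$ and $k_{1}$): the first subcase is immediate from Young's inequality and Cauchy--Schwartz, while the second mimics the $\Omega_{3}$ analysis of Lemma 3.1, the bonus factor $\langle k\rangle^{-2}$ now making the argument strictly more comfortable. Finally, both the $X^{s}$ part and the $Y^{s}$ part of the $Z^{s}$ norm of the output must be controlled, in parallel with Lemmas 3.1 and 3.2 respectively.

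The main obstacle is expected to be the same as in Lemmas 3.1--3.2: case (b) of $\Omega_{2}$, where $\supp\mathscr{F}u_{1}\subset D_{2}$ with $|\sigma_{1}|\sim|\sigma_{2}|$, is what forces $s\geq\frac{1}{6}+\epsilon$ through the need to pay $\langle\sigma\rangle^{s+\frac{1}{3}-\epsilon}$ on one input after using the resonance identity $|\sigma-\sigma_{1}-\sigma_{2}|=3|kk_{1}k_{2}|$. Because our multiplier carries two fewer derivatives than the one in Lemma 3.3, the threshold is not worsened and the same range $\frac{1}{6}+\epsilon\leq s\leq\frac{1}{2}-2\epsilon$ suffices; in particular no new restriction on $s$ is produced, and the estimate closes.
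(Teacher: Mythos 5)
Your plan is essentially the paper's own proof: the paper disposes of Lemma 3.5 with the single remark that it follows ``by a proof similar to Lemma 3.3,'' i.e.\ by rerunning the $\Omega_{1},\dots,\Omega_{8}$ decomposition and the subcases (a)--(c) of Lemma 2.7 from Lemmas 3.1--3.2 with the multiplier $|k|\langle k\rangle^{-2}\langle\sigma\rangle^{-1}$ in place of $|k|\langle k\rangle^{-2}\langle\sigma\rangle^{-1}|k_{1}||k_{2}|$, exactly as you describe. Your observation that the missing input derivatives help in the high-frequency regions and that the low-frequency regions $\Omega_{3},\Omega_{5},\Omega_{7}$ must be rechecked separately (where they in fact become easier, since the weight $|k|\langle k\rangle^{-2}$ is bounded without invoking the splits $|k|\lessgtr|k_{1}|^{-2}$) is consistent with, and somewhat more explicit than, what the paper records.
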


\bigskip
\bigskip

\noindent {\large\bf 4. Proof of Theorem  1.1}

\setcounter{equation}{0}

 \setcounter{Theorem}{0}

\setcounter{Lemma}{0}

\setcounter{section}{4}
In this section,  we prove Theorem 1.1.
\begin{proof}Let $N\gg1$, $a\in \dot{Z}$ and
\begin{eqnarray*}
&&\mathscr{F}u_{1}(k,\tau)=\left(\chi_{(N)}(k)+\chi_{(N)}(-k)\right)\chi_{[-1,1]}(\tau-k^{3}),\nonumber\\&& \mathscr{F}u_{2}(k,\tau)=\left(\chi_{(1-N)}(k)+\chi_{(1-N)}(-k)\right)\chi_{[-1,1]}(\tau-k^{3}),
\end{eqnarray*}
here
\begin{eqnarray*}
\chi_{a}(k)=1 \quad {\rm if}\quad k=a, \chi_{a}(k)=0\quad {\rm if}\quad k\neq a,
\end{eqnarray*}
and
\begin{eqnarray*}
\chi_{[-1,1]}(\sigma)=1\quad {\rm if} \quad |\sigma|\leq 1, \chi_{[-1,1]}=0,\quad {\rm if}\quad |\sigma|>1.
\end{eqnarray*}
Then, we have that
\begin{eqnarray*}
\|u_{j}\|_{W^{s}}\sim N^{s},j=1,2.
\end{eqnarray*}
Let
\begin{eqnarray*}
&&R_{1}(k_{1},k_{2})=\chi_{N}(k_{1})\chi_{(1-N)}(k_{2}), R_{2}(k_{1},k_{2})=\chi_{N}(k_{1})\chi_{(1-N)}(-k_{2}), \nonumber\\&&
R_{3}(k_{1},k_{2})=\chi_{N}(-k_{1})\chi_{(1-N)}(k_{2}), R_{4}(k_{1},k_{2})=\chi_{N}(-k_{1})\chi_{(1-N)}(-k_{2}).
\end{eqnarray*}
Then, by using a direct computation, we have that
\begin{eqnarray*}
&&\left\|\mathscr{F}^{-1}\left[\langle \tau-k^{3}\rangle^{-1}\mathscr{F}F(u_{1},u_{2})\right]\right\|_{W^{s}}\nonumber\\&&\hspace{-1cm}
=\left\|\sum_{j=1}^{4}\int_{\dot{Z}}\frac{|k|^{s+1}}{1+k^{2}}
\left[k^{2}+3+k_{1}k_{2}\right]R_{j}(k_{1},k_{2})\left(\int_{\SR}\langle\sigma\rangle^{-1/2}
\chi_{[-1,1]}(\sigma_{1})\chi_{[-1,1]}(\sigma_{2})d\sigma_{1}\right)dn_{1}\right\|_{L_{n\sigma}^{2}}.
\end{eqnarray*}
From Lemma 2.7, we have that
\begin{eqnarray*}
\langle\sigma\rangle \sim |kk_{1}k_{2}|,
\end{eqnarray*}
since $|\sigma_{j}|\leq 1$ with $j=1,2.$ Thus, we have that
\begin{eqnarray*}
\int_{\SR}\langle \sigma\rangle^{-1/2}\chi_{[-1,1]}(\sigma_{1})\chi_{[-1,1]}(\sigma_{2})d\sigma_{1}\geq C|kk_{1}k_{2}|^{-1/2}.
\end{eqnarray*}
By using a direct computation, we have that
\begin{eqnarray*}
&&\left\|\mathscr{F}^{-1}\left[\langle \tau-k^{3}\rangle^{-1}\mathscr{F}F(u_{1},u_{2})\right]\right\|_{W^{s}}
\nonumber\\&&\geq C\left\|\sum_{j=1}^{4}\int_{\dot{Z}}\frac{|k|^{s+1}}{1+k^{2}}
\left[k^{2}+3+k_{1}k_{2}\right]R_{j}(k_{1},k_{2})|kk_{1}k_{2}|^{-1/2}dk_{1}\right\|_{L_{k}^{2}}\geq CN.
\end{eqnarray*}
If (\ref{1.06}) is invalid, then we have
\begin{eqnarray}
&&CN\leq \left\|\mathscr{F}^{-1}\left[\langle \tau-k^{3}\rangle^{-1}\mathscr{F}F(u_{1},u_{2})\right]\right\|_{X_{s,\frac{1}{2}}}\nonumber\\&&\leq C
\left\|\mathscr{F}^{-1}\left[\langle \tau-k^{3}\rangle^{-1}\mathscr{F}F(u_{1},u_{2})\right]\right\|_{W^{s}}\leq C\prod_{j=1}^{2}\|u_{j}\|_{W^{s}}\sim N^{2s}.\label{4.01}
\end{eqnarray}
We obtain the contradiction since $s<\frac{1}{2}.$
\end{proof}

We have completed the proof of Theorem 1.1.

\noindent {\large\bf 5. Proof of Theorem  1.2}

\setcounter{equation}{0}

 \setcounter{Theorem}{0}

\setcounter{Lemma}{0}

\setcounter{section}{4}
Now we are in a position to prove Theorem 1.2.
Let
\begin{eqnarray*}
F(t)=\frac{1}{2}\partial_{x}(u^{2})
+\partial_{x}(1-\partial_{x}^{2})^{-1}\left[u^{2}+\frac{1}{2}u_{x}^{2}\right].
\end{eqnarray*}
We define
\begin{eqnarray*}
&&\Phi(u)=\eta(t) S(t)\phi-\eta(t) \int_{0}^{t}S(t-t^{'})F(t^{'})dt^{'},\label{4.01}\nonumber\\
&&B=\left\{u\in Z^{s}: \quad \|u\|_{ Z^{s}}\leq 2C\|\phi\|_{H^{s}(\mathbf{T})}\right\}.
\end{eqnarray*}
By using   Lemmas 2.3-2.4 and Lemma 3.3-3.5,  we have that
\begin{eqnarray*}
&&\left\|\Phi(u)\right\|_{Z^{s}}\leq \left\| \eta(t)S(t)\phi\right\|_{Z^{s}}
+\left\|\eta(t) \int_{0}^{t}S(t-t^{'})
F(t^{'})dt^{'}\right\|_{Z^{s}}\nonumber\\&&\leq C\|\phi\|_{H^{s}(\mathbf{T})}
+C\left\|\Lambda^{-1}F(t)\right\|_{Z^{s}}\nonumber\\
&&\leq C\|\phi\|_{H^{s}(\mathbf{T})}+C\|u\|_{Z^{s}}^{2}.
\end{eqnarray*}
Obviously, for sufficiently small $\|\phi\|_{H^{s}(\mathbf{T})}$, we have that
\begin{eqnarray*}
\left\|\Phi(u)\right\|_{Z^{s}}\leq C\|\phi\|_{H^{s}(\mathbf{T})}+4C^{3}\|\phi\|_{H^{s}(\mathbf{T})}^{2}\leq 2C\|\phi\|_{H^{s}(\mathbf{T})}
\end{eqnarray*}
on closed ball $B$.
For $u,v\in B$,    when $\|\phi\|_{H^{s}(0,2\pi)}$ is sufficiently small,  we have that
\begin{eqnarray*}
&&\left\|\Phi(u)-\Phi(v)\right\|_{Z^{s}}\nonumber\\&&\leq C
\left(\|u\|_{Z^{s}}+\|v\|_{Z^{s}}\right)\|u-v\|_{Z^{s}}\leq 2C\|\phi\|_{H^{s}(\mathbf{T})}\|u-v\|_{Z^{s}}\leq \frac{1}{2}\|u-v\|_{Z^{s}}.
\end{eqnarray*}

The proof of the rest of Theorem 1.2 is standard, which can be found in \cite{MT,TKato}, thus, we omit the process.

\bigskip
\bigskip

\leftline{\large \bf Acknowledgments}

\bigskip

\noindent

 This work is supported by the Natural Science Foundation of China
 under grant numbers 11171116, 11471330 and 11401180. The first author is also
 supported in part by the Fundamental Research Funds for the
 Central Universities of China under the grant number 2012ZZ0072.
 The second author is  supported by the
 NSF of China (No.11371367) and Fundamental
 research program of NUDT(JC12-02-03).

  \bigskip

  \bigskip

\leftline{\large\bf  References}


\begin{thebibliography}{99}

\bibitem{BSS}
R. Beals, D. Sattinger, J. Szmigielski, Multipeakons and a theorem of Stieltjes, {\it Inverse Problems} 15(1999), 1-4.

 \bibitem{BT}
 I.  Bejenaru, T. Tao,  Sharp well-posedness and ill-posedness results for a
quadratic non-linear Schr\"odinger equation, {\it J. Funct. Anal.} 233(2006), 228-259.

 \bibitem{B}
 J. Bourgain,
 Fourier transform restriction phenomena for certain lattice subsets and
 applications to nonlinear evolution equations,  part I: Schr\"odinger equations,
  {\it Geom. Funct. Anal.} 3(1993),  107-156.


\bibitem{Bourgain93}
J. Bourgain, Fourier transform restriction phenomena for certain lattice subsets
and applications to nonlinear evolution equations,
part II: The KdV equation,  {\it Geom.   Funct. Anal.} 3(1993), 209-262.

\bibitem{Bourgain97}
 J. Bourgain, Periodic Korteweg  de vries equation with measures as initial data,
 {\it  Sel. Math.} 3(1997), 115-159.

 \bibitem{B98}
 J. Bourgain,  Refinements of Strichartz' inequality and applications to 2 D-NLS with critical nonlinearity,
 {\it  Int. Math. Res. Not.}
  5(1998),   253-283.

 \bibitem{BCARMA}
 A. Bressan, A. Constantin, Global  conservative solutions of the Camassa-Holm equation, {\it Arch. Ration. Mech. Anal.} 183(2007), 215-239.

 \bibitem{BCAA} A. Bressan, A. Constantin, Global dissipative solutions of the Camassa-Holm equation, {\it Anal. Appl.} 5(2007), 1-27.

\bibitem{BDIE} P. Byers, The Cauchy problem for a fifth order evolution equation, {\it Diff. Int. Eqns.} 16(2003), 537-556.

\bibitem{B2003} P. Byers, The initial value problem for a
KdV-type equation and a related bilinear estimate, dissertation, University of Notre
Dame, 2003.


\bibitem{CH} R. Camassa, D. Holm, An integrable shallow water equation with peaked solutions,
 {\it Phys. Rev. Lett.}  71(1993), 1661-1664.
 \bibitem{CHH} R. Camassa, D. Holm, J. Hyman, A new integrable shallow water equation, {\it Adv. Appl. Mech.} 31(1994), 1-33.

\bibitem{CKSTT}  J. Colliander, M. Keel, G. Staffilani,  H. Takaoka,  T. Tao,
Sharp global well-posedness for KdV and modified KdV on $\R$  and $\mathbf{T}$,
{\it J. Amer. Math. Soc.} 16(2003),  705-749.

\bibitem{C} A. Constantin,  The Hamiltonian structure of the Camassa-Holm equation, {\it  Exposition. Math.} 15(1997),  53-85.
\bibitem{Con2000} A. Constantin, Existence of permanent and breaking waves for a shallow water equation: a geometric approach,
{\it Ann. Inst. Fourier (Grenoble)} 50(2000), 321-362.

\bibitem{C2000}A. Constantin, On the blow-up of solutions of a periodic shallow water equation, {\it  J. Nonlinear Sci.} 10(2000),  391-399.

\bibitem{C2001}A.  Constantin,  On the scattering problem for the Camassa-Holm equation, {\it Proc. R. Soc. Lond. Proc. Ser. A } 457(2001), 953-970.

\bibitem{C2006} A. Constantin, The trajectories of particles in Stokes waves, {\it Invent. Math.} 166(2006), 523-535.



\bibitem{CE} A. Constantin, J. Escher,  Wave breaking for nonlinear nonlocal shallow water equations, {\it  Acta Math.} 181(1998),  229-243.

\bibitem{CECPAM}A. Constantin, J. Escher,  Well-posedness, global existence, and
blowup phenomena for a periodic quasi-linear hyperbolic equation, {\it  Comm. Pure Appl. Math.} 51(1998),  475-504.


\bibitem{CE1998}A.  Constantin, J. Escher, Global weak solutions for a shallow water equation,
{\it  Indiana Univ. Math. J.} 47(1998), 1527-1545.

\bibitem{CEA}A.  Constantin, J. Escher,  Global existence and blow-up for a shallow water equation,
 {\it Ann. Scuola Norm. Sup. Pisa Cl. Sci.} 26(1998),  303-328.



\bibitem{CE2000}A.  Constantin, J. Escher,  On the blow-up rate and the blow-up set of breaking waves for a shallow water equation,
 {\it Math. Z.} 233(2000),  75-91.

\bibitem{CEB2007} A. Constantin, J. Escher, Particle trajectories in solitary water waves, {\it Bull. AMer. Math. Soc.} 44(2007), 423-431.



\bibitem{CKL}A. Constantin,  B. Kolev, J.  Lenells,  Integrability of invariant metrics on the Virasoro group,
 {\it  Phys. Lett. A} 350(2006),  75-80.

\bibitem{CK}A. Constantin, B. Kolev, Geodesic flow on the diffeomorphism group of
the circle, {\it  Comment. Math. Helv.} 78(2003),  787-804.

 \bibitem{CLa} A. Constantin, D. Lannes, The hydrodynamical relevance of the Camssa-Holm and Degasperis-Procesi equation,
{\it Arch. Ration. Mech. Anal.} 192(2009), 165-186.




 \bibitem{CJ} A. Constantin, R. S. Johnson, Propagation of very long water waves, with vorticity,
 over variable depth, with applications to tsunamis, {\it Fluid Dynam. Res.}40 (2008), 175-211.

 \bibitem{CMCPAM} A. Constantin, H. P. Mckean, A shallow water
on the circle,
{\it Comm. Pure Appl. Math.} 52(1999), 949-982.


 \bibitem{CM}A.  Constantin, L.  Molinet,  Global weak solutions for a shallow water equation, {\it  Comm. Math. Phys.} 211(2000), 45-61.




 \bibitem{CMP}A. Constantin, L.  Molinet,
  Orbital stability of solitary waves for a shallow water equation, {\it  Phys. D} 157(2001),  75-89.

\bibitem {CS} A. Constantin, W.  Strauss,  Stability of peakons, {\it  Comm. Pure Appl. Math.} 53(2000),  603-610.
\bibitem{CSPLA} A. Constantin, W. A. Strauss, Stability of a class of solitary waves in compressible elastic rods, {\it Phys. Lett. A} 270(2000), 140-148.


\bibitem{CSNS} A. Constantin, W. A. Strauss,  Stability of the Camassa-Holm solitons, {\it  J. Nonlinear Sci.} 12(2002),  415-422.

\bibitem{DH} H. H. Dai, Model equations for nonlinear dispersive waves in a compressible
Mooney-Rivlin rod, {\it Acta Mech.} 127(1998), 193-207.



 \bibitem{D2003} R. Danchin,  A note on well-posedness for Camassa-Holm equation, {\it  J. Diff. Eqns.} 192(2003),  429-444.

\bibitem{DGH} H. R. Dullin, G. A. Gottwald, D. D. Holm, An integrable
shallow water equation with linear and nonlinear dispersion,
{\it Phys. Rev. Lett.} 87(2001), 4501-4504 .




\bibitem{EY}J. Escher, Z. Y. Yin, Initial boundary value problems of the
Camassa-Holm equation, {\it  Comm. Partial Differential
Equations} 33(2008),  377-395.

\bibitem{EYJFA} J. Escher, Z. Y. Yin, Initial boundary value problems
for nonlinear dispersive wave equations, {\it J. Funct. Anal.} 256(2009), 479-508.


\bibitem{FF}A. Fokas, B. Fuchssteiner, Symplectic structures, their
$B\ddot{a}$klund transformations and hereditary
symmetries, {\it  Phys. D.}  71(1981),  47-66.


\bibitem{Go} J. Gorsky, On the Cauchy problem for a KdV-type equation
 on the circle, dissertation, University of Notre Dame, 2004.





\bibitem{G} Z. H.  Guo,  Global well-posedness of Korteweg-de Vries
equation in $H^{-3/4}(\R),$
{\it Journal de Math$\acute{e}$matiques Pures et Appliqu$\acute{e}$es}, 91(2009), 583-597.




\bibitem{HM1998} A. A. Himonas, G. Misiolek, The  Cauchy   problem for
a shallow water type equation, {\it  Comm. Partial   Diff. Eqns.}
23(1998), 123-139.




\bibitem{HM2000} A. A. Himonas, G. Misiolek,  Well-posedness of the Cauchy
problem for a shallow water equation on the circle,
{\it J. Diff. Eqns.} 161(2000), 479-495.

\bibitem{HM}A. A. Himonas, G. Misiolek, The initial value problem for a
fifth order shallow water in: Analysis, Geometry, Number Theory:
The Mathematics of Leon Ehrenpreis, in: Contemp. Math. Vol. 251, Amer.
Math. Soc., Providence, RI, 2000, pp. 309-320.

\bibitem{HMPZ} A. A. Himonas,G.  Misiolek, G.  Ponce, Y. Zhou,  Persistence properties and unique continuation of solutions of the Camassa-Holm equation,
 {\it Comm. Math. Phys.} 271(2007), 511-522.
\bibitem{HM2005} A. A. Himonas, G. Misiolek,  High-frequency smooth solutions and well-posedness of the Camassa-Holm equation,{\it Int. Math. Res. Not.} 51(2005), 3135-3151.


\bibitem{IK}
A. D. Ionescu, C. E. Kenig, Global well-posedness of the Benjamin-Ono equation
in low-regularity spaces, {\it J. Amer. Math. Soc.} 20(2007), 753-798.

\bibitem{IKT}
A. D. Ionescu, C. E. Kenig, D. Tataru,  Global well-posedness of the KP-I
initial-value problem in the energy space, {\it Invent. Math.} 173(2008), 265-304.

\bibitem{Iv2006}R. I. Ivanov, Exended Camassa-Holm hierarchy and conserved quantities, {\it Z. Naturforsch} 61(2006), 133-138.
\bibitem{Iv2007} R. I. Ivanov, Water waves and integrability, {\it Philos. Trans. R. Soc. Lond. Ser. A.} 365(2007), 2267-2280.
\bibitem{J} R. S. Johnson, Camassa-Holm, Korteweg-de Vries and related models for water waves, {\it J. Fluid Mech.} 457(2002), 63-82.

\bibitem{KL} H. Kalisch, J.  Lenells,  Numerical study of traveling-wave solutions for the Camassa-Holm equation,
 {\it  Chaos Solitons Fractals} 25(2005),  287-298.



\bibitem{KT2006} T. Kappeler and P. Topalov, Global wellposedness of KdV in
$H^{-1}({\mathbf T},{\mathbf R})$, {\it Duke Math. J.}  135(2006),  327-360.

\bibitem{TKato} T. K. Kato, Local well-posedness for Kawahara equation,
{Adv. Diff. Equ.} 16(2011), 257-287.

\bibitem{Kato}
T. Kato, Low regularity well-posedness for the periodic Kawahara  equation,
{\it Diff. Int. Eqns.} 25(2012),  1011-1036.




\bibitem{KPV1996}
C. E. Kenig,  G. Ponce, L. Vega, A bilinear estimate with applications to the KdV equation,
 {\it J. Amer. Math. Soc.} 9(1996), 573-603.


\bibitem{KPV2001}
C. E. Kenig,  G. Ponce, L. Vega, On the ill-posedness of some canonical dispersive equations,
{\it Duke Math. J.} 106(2001), 617-633.




\bibitem{Kis}
 N. Kishimoto, Well-posedness of the Cauchy problem for the Korteweg-de
Vries  equation at the critical regularity, {\it Diff. Int. Eqns.}  22(2009), 447-464.

\bibitem{KT} N. Kishimoto, K. Tsugawa, Local well-posedness for
quadratic Schr\"odinger equation
and ``good" Boussinesq equation, {\it Diff. Int. Eqns.} 23(2010), 463-493.


\bibitem{Ko}B. Kolev, Bi-Hamiltonian systems on the dual of the Lie algebra of vector fields of the circle and periodic
shallow  water equations, {\it Philos. Trans. R. Soc. Lond. Ser. A} 365(2007), 2333-2357.


\bibitem{Lak} M. Lakshmanan, Integrable nonlinear wave equations and possible
 connections to tsunami dynamics, in: Tsunami and Nonlinear waves, Spinger, Berlin, 2007, 31-49.


\bibitem{L2004IMRN} J. Lenells,   The correspondence between KdV and Camassa-Holm,
 {\it  Int. Math. Res. Not.} 71(2004),   3797-3811.










\bibitem{LIMRN} J. Lenells,  Stability of periodic peakons, {\it Int. Math. Res. Not.} 10(2004),  485-499.

\bibitem{L2005}J.  Lenells, Stability for the periodic Camassa-Holm equation, {\it  Math. Scand.} 97(2005),  188-200.

\bibitem{LPA}J. Lenells,
 Conservation laws of the Camassa-Holm equation, {\it J. Phys. A} 38(2005), 869-880.



\bibitem{L2005JDE} J.  Lenells,  Traveling wave solutions of the Camassa-Holm equation,
 {\it  J. Differential Equations} 217(2005), 393-430.






\bibitem{LYLH} S. M. Li, W. Yan, Y. S. Li, J. H. Huang, The Cauchy problem for
a higher order shallow water  type equation on the circle,
{\it J. Diff. Eqns.} 259(2015), 4863-4896.

\bibitem{LYY}Y. S. Li,  W. Yan,   X. Y.  Yang,
Well-posedness of a higher order modified Camassa-Holm equation
 in spaces of low regularity,
{\it J. Evol. Eqns.} 10(2010),  465-486.



\bibitem{L} B. P. Liu, A priori bounds for KdV equation below $H^{-3/4}$,
{\it J. Funct. Anal.,} 268(2015), 501-554.


\bibitem{LJ} X. Liu, Y. Jin, The Cauchy problem of a shallow water equation,
{\it Acta Math. Sin.(Engl. Ser.)} 30(2004), 1-16.

\bibitem{LO} Y. Li, P. Olver, Well-posedness and blow-up solutions for an integrable
nonlinearly dispersive model wave equation, {\it J. Diff. Eqns.} 162(2000), 27-63.


\bibitem{LY} Y. S. Li, X. Y. Yang,  Global well-posedness for a fifth-order
shallow water equation on the circle,
 {\it Acta Math. Sci. Ser. B Engl. Ed.} 31(2011),  1303-1317.


\bibitem{Molinet} L. Molinet,  Sharp ill-posedness results for the KdV and mKdV
equations on the torus, {Advances in Mathematics,}  230(2012), 1895-1930.

\bibitem{MT} T. Muramatu, S. Taoka,
The initial data value problem for the 1-D semilinear Schr$\ddot{o}$dinger
equation in the Besov space,
{\it J. Math. Soc. Japan.} 56(2004), 853-888.





\bibitem{NTT} K. Nakanishi, H. Takaoka, Y. Tsutsumi,
Local well-posedness in low regularity of the mKdV equation with
periodic boundary  condition,
 {\it  Discrete Contin. Dyn. Syst.}, 28(2010), 1635-1654.


\bibitem{O} E. A. Olson,  Well posedness for a higher order modified Camassa-Holm
equation, {\it J. Diff. Eqns.},  246(2009), 4154-4172.

\bibitem{R} G. Rodriguez-Blanco, On the Cauchy problem for the Camassa-Holm
equation, {\it Nonlinear Anal.} 46(2001), 309-327.


\bibitem{TT} H. Takoaoka, Y. Tsutsumi, Well-posedness of the Cauchy problem
for the modified KdV equation with periodic boundary condition, {\it Int. Math. Res. Not.}56(2004), 3009-3040.

\bibitem{T}  T. Tao, Multilinear weighted convolution of $L^2$ functions, and
applicationsto non-linear dispersive equations, {\it Amer. J. Math.} 123(2001), 839-908.

\bibitem{To}
J. F. Toland, Stokes waves, {\it Topol. Methods Nonlinear Anal.} 7(1996), 1-48.

\bibitem{WC}
H. Wang, S. B. Cui, Global well-posedness of the Cauchy problem of the fifth order
shallow water equation, {\it J. Diff. Eqns.} 230(2006), 600-613.





 \bibitem{XZCPAM} Z. P. Xin, P. Zhang, On the weak solutions to a shallow water equation,
 {\it Comm. Pure Appl. Math.} 53(2000), 1411-1433.

\bibitem{XZCPDE} Z. P. Xin, P. Zhang,  On the uniqueness and
large time behavior of the weak solutions to a shallow water equation.
{\it Comm. Partial Diff. Eqns.} 27(2002),  1815-1844.


\bibitem{YL}X. Y. Yang, Y. S.  Li, Global well-posedness for a fifth-order
shallow water equation in Sobolev spaces,
 {\it J. Diff. Eqns.} 248(2010),  1458-1472.

\bibitem{Y} Z. Yin, Well-posedness, global existence and blowup phenomena for an
integrable shallow water equation, {\it Discrete
Contin. Dyn. Syst. Ser. A} 10(2004), 393-411.



\end{thebibliography}
\end{document}